\let\cal\mathcal
\let\hat\widehat
\let\tilde\widetilde
\let\phi\varphi
\let\epsilon\varepsilon
\def\Q{{\bf Q}} 
\def\Z{{\bf Z}}
\def\C{{\bf C}}
\def\N{{\bf N}}
\def\A{{\bf A}}
\def\E{{\bf E}}
\def\G{{\cal G}}
\def\id{{\mathrm{id}}}
\def\Aut{{\mathrm{Aut}}}
\def\Emb{{\mathrm{Emb}}}
\def\Frob{{\mathrm{Frob}}}
\def\Zp{{\Z_p}}
\def\Qp{{\Q_p}}
\def\Cp{{\C_p}}
\def\Qpbar{{\overline{\Q}_p}}
\def\At{{\tilde{\bf{A}}}}
\def\Atplus{{\tilde{\bf{A}}^+}}
\def\Btplus{{\tilde{\bf{B}}^+}}
\def\Et{{\tilde{\bf{E}}}}
\def\Etplus{{\tilde{\bf{E}}^+}}
\def\Bdr{{\bf{B}_{\mathrm{dR}}}}
\def\Gal{{\mathrm{Gal}}}
\newtheorem{question}{Question}
\author{Léo Poyeton}
\address{BICMR\\
Peking University}
\email{leo.poyeton@ens-lyon.fr}
\urladdr{perso.ens-lyon.fr/leo.poyeton/}
\date{September 2019}
\title{Formal groups and lifts of the field of norms}
\begin{document}

\begin{abstract}
Let $K$ be a finite extension of $\Qp$. The field of norms of a strictly APF extension $K_\infty/K$ is a local field of characteristic $p$ equipped with an action of $\Gal(K_\infty/K)$. When can we lift this action to characteristic zero, along with a compatible Frobenius map? In this article, we explain what we mean by lifting the field of norms, explain its relevance to the theory of $(\phi,\Gamma)$-modules, and show that under a certain assumption on the type of lift, such an extension is generated by the torsion points of a relative Lubin-Tate group and that the power series giving the lift of the action of the Galois group of $K_\infty/K$ are twists of semi-conjugates of endomorphisms of the same relative Lubin-Tate group.
\end{abstract}

\subjclass{11S15; 11S20; 11S25; 11S31; 11S82; 13F25}

\keywords{Field of norms; $(\phi,\Gamma)$-modules; $p$-adic representations; Cohen ring; non-Archimedean dynamical system; $p$-adic Hodge theory; Local class field theory; Lubin-Tate group}

\maketitle

\tableofcontents

\setlength{\baselineskip}{18pt}

\section*{Introduction}\label{intro} 
Let $K$ be a finite extension of $\Qp$, and let $K_\infty/K$ be a totally ramified Galois extension which is ``strictly arithmetically profinite'' in the sense of Wintenberger \cite{Win83}, so that we can attach to $K_\infty/K$ its field of norms $X_K(K_\infty)$. This field of norms is a field of characteristic $p$, isomorphic to $k_K(\!(\pi_K)\!)$ where $k_K$ denotes the residue field of $K$ and where $\pi_K$ is a uniformizer of $X_K(K_\infty)$, and is naturally equipped with an action of $\Gamma_K=\Gal(K_\infty/K)$. Let $E$ be a finite extension of $\Qp$ such that $k_E \supset k_K$. In this article, we consider the following slightly generalized question of Berger \cite{Ber13lifting}: when can we lift the action of $\Gamma_K$ on $k_K(\!(\pi_K)\!)$ to the $p$-adic completion of $\mathcal{O}_E[\![T]\!][1/T]$, which is a complete ring of characteristic $0$ that lifts an unramified extension of $X_K(K_\infty)$, along with a compatible $\mathcal{O}_E$-linear power of the Frobenius map $\phi_d$ ? When it is possible to do so, we say, following Berger's definition, that the action of $\Gamma_K$ is liftable. 

In the case where $k_E = k_K$ and where we have an $\mathcal{O}_E$-linear compatible Frobenius map $\phi_q$, Fontaine's construction of $(\phi,\Gamma)$-modules applies, and we get an equivalence of categories between $(\phi_q,\Gamma_K)$-modules on $\A_K$, the $p$-adic completion of $\mathcal{O}_E[\![T]\!][1/T]$, and $\mathcal{O}_E$-linear representations of $\G_K$. Such a lift is possible when $K_\infty/K$ is the cyclotomic extension, or more generally when $K_\infty/K$ is generated by the torsion points of a formal relative Lubin-Tate group, relative to a subextension  $E/F$ of $K$.

Note that, if the action of $\Gamma_K$ is liftable, we then have power series $F_g(T)$ and $P(T)$ in $\A_K$ that commute under the composition law, where $P$ is a noninvertible series and the $F_g$ are invertible. If we make the same assumption as in \cite{Ber13lifting}, that is assuming that $P(T)$ is actually a power series in $\A_K^+:=\mathcal{O}_E[\![T]\!]$, we shall call that case a lift of finite height and in that case, one can show that, up to a variable change, we have $P(T) \in T\cdot\mathcal{O}_E[\![T]\!]$ and the series $F_g(T)$ also belong to $T\cdot\mathcal{O}_E[\![T]\!]$ (see \cite[Lemm. 4.4]{Ber13lifting} for the first claim and \cite[Prop 4.2]{Ber13lifting} and the part concerning this proposition in \cite{erratumBerger} for the second claim), and Berger proved that $K_\infty/K$ is necessary abelian.

The main result of this article is the following:
\begin{theo}
\label{main theo}
If the action of $\Gamma_K$ is liftable, if $E$ is a finite Galois extension of $\Qp$ containing $K$ and if the $\mathcal{O}_E$-linear Frobenius on $\A_K$ is of finite height, then there exists a finite extension $L$ of $\Qp$, contained in $E_\infty=E \cdot K_\infty$, a subfield $F$ of $L$ and a relative Lubin-Tate group $S$, relative to the extension $F^{\mathrm{unr}} \cap L$ of $F$, such that if $L_{\infty}^S$ is the extension of $L$ generated by the torsion points of $S$, then $K_\infty \subset L_{\infty}^S$ and $L_{\infty}^S/K_\infty$ is a finite extension.
\end{theo}

In particular, this theorem puts a clear limit on which Galois extensions one can use to build a finite height one dimensional $(\phi,\Gamma_K)$-modules theory: these have to be the extensions obtained when taking the invariants under a finite group of a relative Lubin-Tate extension.  

In that finite height case, we recover the situation of non archimedean dynamical systems studied by Lubin in \cite{lubin1994nonarchimedean}, that is families of elements of $T\cdot \mathcal{O}_E[\![T]\!]$ who commute one to another for the composition law. The main ingredients of the proof of theorem \ref{main theo} are $p$-adic Hodge theory and some of the tools developed in \cite{lubin1994nonarchimedean} by Lubin to study these non archimedean dynamical systems.

In \cite[Page 341]{lubin1994nonarchimedean}, Lubin noticed that ``experimental evidence seems to suggest that for an invertible series to commute with a noninvertible series, there must be a formal group somehow in the background''. Various results have been obtained in that direction (see for instance \cite{Li96}, \cite{Li97b}, \cite{laubie2002systemes}, \cite{sarkis}, \cite{Ber17}) and our theorem \ref{main theo} shows that there is indeed a formal group that accounts for this. The last part of this article is dedicated to explicit a link between the power series by which $\Gamma_K$ acts on $\A_K$ and some endomorphims of the same relative Lubin-Tate formal group. 

As this article is dedicated to the question of lifts of finite height of the lifts of fields of norms, it also seems relevant to consider the following question:

\begin{question}
\label{question unramified}
Let $K_\infty/K$ be a strictly APF extension which is Galois, and for which the action of $\Gamma_K$ is liftable of finite height. For which finite extensions $L$ of $K$ is the action of $\Gal((L\cdot K_\infty)/L)$ liftable of finite height ?
\end{question}

In the cyclotomic case, remarks from Wach \cite[Rem. p. 393]{wach1996representations} and Herr \cite[§ 1.1.2.2]{herr1998cohomologie} show that the only extensions $L$ of $\Qp$ answering this question are the ones for which $L(\mu_{p^\infty})/\Qp(\mu_{p^\infty})$ is unramified. 

First, we prove the following, which is a partial converse of theorem \ref{main theo} and shows that a straightforward generalization of the remark of Wach and Herr does not hold for every extension for which the action is liftable:

\begin{theo}
\label{theo contredit}
Let $K$ be a finite extension of $\Qp$ and let $K_\infty/K$ be the extension generated by the torsion points of a relative Lubin-Tate group, relative to a subfield $F$ of $K$. Let $L$ be an extension of $K$, contained in $K_\infty$ such that $K_\infty/L$ is finite and Galois, of degree prime to $p$. Then there exists a finite extension $E$ of $K$ and a $\phi$-iterate extension $E_\infty/E$ such that $L=E_\infty$. 
\end{theo}

In particular, the action of $\Gal(L/E)$ and $\Gal(K_\infty/E)$ is liftable of finite height, but $K_\infty/L$ is totally ramified.

Then, we prove how one can relate the power series appearing from the $\Gamma_K$ action of $\A_K$ with endomorphisms of a formal group:

\begin{theo}
\label{theo intro isogeny}
If $K_\infty/K$ is a strictly APF Galois extension such that the action of $\Gamma_K=\Gal(K_\infty/K)$ is liftable, with power series $P(T)$ and $\{F_g(T)\} \in \A_K$, then the power series $F_g$ are twists of semi-conjugates over a finite extension of $\Qp$ to endomorphisms of a relative Lubin-Tate group, relative to a subfield $F$ of a Galois extension $E$ of $\Qp$ which contains $K$ and such that $(E \cdot K_\infty)/E$ is totally ramified. Moreover, there is an isogeny from $[\alpha]$ to $P$.
\end{theo}

In the meantime, some of the methods used to prove this theorem allow us to give an answer to question \ref{question unramified} for relative Lubin-Tate extensions that is basically the same as in the cyclotomic case:

\begin{theo}
\label{theo LT ext unramified}
Let $K_\infty/K$ be a relative Lubin-Tate extension, relative to a subfield $F$ of $K$. Then the action of $\Gamma_K$ is liftable of finite height, and the only extensions $L$ of $K$ for which there is a finite height lift of $\Gal(L \cdot K_\infty/L)$ are the ones for which $L \cdot K_\infty/K_\infty$ is unramified.
\end{theo}

\subsection*{Acknowledgements}  
Most of the results presented here are part of the author's Ph.D. thesis. Without Laurent Berger's help, encouragement and comments, none of this work would have been possible. The author would also like to thank Gabriel Dospinescu for many useful discussions.

\section{Lifting the field of norms}
Let $K$ be a finite extension of $\Qp$, and let $K_\infty$ be an infinite totally ramified Galois extension of $K$ which is ``strictly arithmetically profinite'' in the sense of \cite[Def. 1.2.1]{Win83}. This is a technical condition about the ramification of the extension $K_\infty/K$. Note that, if $\Gamma_K=\Gal(K_\infty/K)$ is a $p$-adic Lie group, it follows from the main result of \cite{sen1972ramification} that $K_\infty/K$ is strictly arithmetically profinite.  

We can apply to the extension $K_\infty/K$ the ``field of norms'' construction of Fontaine-Wintenberger \cite{Win83} which is the following~: let $\mathcal{E}_{K_\infty/K}$ be the set of finite extensions of $K$ contained in $K_\infty$, and let $X_K(K_\infty)$ be the set of sequences $(x_E)_{E \in \mathcal{E}_{K_\infty/K}}$ such that $N_{E_2/E_1}(x_{E_2}) = x_{E_1}$ whenever $E_1 \subset E_2$. By \cite[§2]{Win83}, $X_K(K_\infty)$ can be endowed with the structure of a valued field, and we have the following theorem, which is \cite[Thm. 2.1.3]{Win83}:
\begin{theo}
The field $X_K(K_\infty)$ is a local field of characteristic $p$ whose residue field is $k_K$.
\end{theo} 
In particular, if $\pi_K$ denotes a uniformizer of $X_K(K_\infty)$ we have $X_K(K_\infty) = k_K(\!(\pi_K)\!)$. By construction of $X_K(K_\infty)$, it is naturally endowed with an action of $\Gamma_K$. It is also endowed with the Frobenius map $\phi_q : x \mapsto x^q$ which commutes with the action of $\Gamma_K$.

Now let $E$ be a finite extension of $\Qp$ with residue field $k_E=k_K$, let $\varpi_E$ be a uniformizer of $E$ and let $\A_K$ denote the $\varpi_E$-adic completion of $\mathcal{O}_E[\![T]\!][1/T]$. The notation $\A_K$ is used for compatibility with the action of $\G_K$, but be careful that it is actually dependent on $E$, even though $E$ does not appear in this notation. This ring $\A_K$ is a $\varpi_E$-Cohen ring for $X_K(K_\infty)=k_K(\!(\pi_K)\!)$. Following the definition of \cite{Ber13lifting}, we will say that the action of $\Gamma_K$ is liftable if there exists such a field $E$ and power series $\{F_g(T)\}_{g \in \Gamma_K}$ and $P(T)$ in $\A_K$ such that:
\begin{enumerate}
\item $\overline{F}_g(\pi_K) = g(\pi_K)$ and $\overline{P}(\pi_K) = \pi_K^q$ ;
\item $F_g \circ P = P \circ F_g$ and $F_g \circ F_h = F_{hg}$ for all $g,h \in \Gamma_K$.
\end{enumerate}

The main question we are interested in is trying to understand for which extensions $K_\infty/K$ the action of $\Gamma_K$ is liftable. One reason for asking this is that in that case we get a $(\phi,\Gamma)$-module theory a la Fontaine to study $\mathcal{O}_E$-representations of $\G_K$, replacing the cyclotomic extension in the theory of Fontaine by the extension $K_\infty/K$. In particular, if the action of $\Gamma_K$ is liftable, then there is an equivalence of categories between étale $(\phi_q,\Gamma_K)$-modules on $\A_K$ and $\mathcal{O}_E$-linear representations of $\G_K$ (see \cite[Thm. 2.1]{Ber13lifting}).

If $L$ is a finite extension of $K$, let $L_\infty:=L\cdot K_\infty$ and $\Gamma_L=\Gal(L_\infty/K)$. By \cite[§2.3]{Win83}, every finite separable extension of $X_K(K_\infty)$ is of the form $X_K(L_\infty)$ for some $L$ finite over $K$. To such an extension $X_K(L_\infty)/X_K(K_\infty)$, there exists by Hensel lemma a unique étale extension of $\varpi_E$-rings $\A_L/\A_K$ such that $\A_L$ is a Cohen ring for $X_K(L_\infty)$.

We have the following results, which are theorems 1.3 and 1.4 of \cite{Ber13lifting}:

\begin{theo}
\label{liftable stable par ext}
If the action of $\Gamma_K$ on $X_K(K_\infty)$ is liftable and if $L/K$ is a finite extension, then the action of $\Gamma_L$ on $X_K(L_\infty)$ is liftable.
\end{theo}

\begin{theo}
Let $F_\infty \subset K_\infty$ be a Galois subextension of $K_\infty/K$ such that $K_\infty/F_\infty$ is finite, and let $\Gamma_F:=\Gal(F_\infty/K)$. If the action of $\Gamma_K$ on $X_K(K_\infty)$ is liftable then the action of $\Gamma_F$ on $X_K(F_\infty)$ is liftable.
\end{theo}

In \cite[Thm. 4.1]{Ber13lifting}, Berger showed the following:
\begin{theo}
\label{Berger poids pos ext ab}
If the action of $\Gamma_K$ is liftable with $P(T) \in \mathcal{O}_E[\![T]\!]$, then there is an injective character $\eta: \Gamma_K \to \mathcal{O}_E^{\times}$ whose conjugates by $\Emb(K,\Qpbar)$ are all de Rham with weights in $\Z_{\geq 0}$.
\end{theo}
In particular, if the action of $\Gamma_K$ is liftable with $P(T) \in \mathcal{O}_E[\![T]\!]$, then $K_\infty/K$ is abelian. We also know some cases where the action of $\Gamma_K$ is liftable, namely in the cyclotomic case, which is the situation studied by Fontaine in \cite{Fon90}, and more generally in the Lubin-Tate case (\cite{Win83} for the remark that it is liftable in characteristic zero and \cite{KR09} for the construction of the related $(\phi,\Gamma)$-modules).

We can actually generalize the notion of ``liftable action'': let $E$ be a finite extension of $\Qp$ with residue field $k_E \supset k_K$ and let $d$ be a power of $q$. We say that the action of $\Gamma_K$ is liftable if there exists power series $\{F_g(T)\}_{g \in \Gamma_K}$ and $P(T)$ in $\A_K$ such that:
\begin{enumerate}
\item $\overline{F}_g(T) \in k_K(\!(T)\!)$ and $\overline{P}(T) \in k_K(\!(T)\!)$,
\item $\overline{F}_g(\pi_K) = g(\pi_K)$ and $\overline{P}(\pi_K) = \pi_K^d$,
\item $F_g \circ P = P \circ F_g$ and $F_g \circ F_h = F_{hg}$ for all $g,h \in \Gamma_K$.
\end{enumerate}

In this more general setting, there is no reason anymore to get a theory of $(\phi,\Gamma)$-modules \textit{a priori}.

\section{Embeddings into rings of periods}
In this section, we explain how to view the different rings we talked about in the previous section as subrings of some of Fontaine's rings of periods, and we give some key results about those rings and embeddings that will be used later on. We also recall some results of Cais and Davis about their ``canonical Cohen rings for norm fields'' \cite{cais2015canonical} that we shall need later on.

Let $K/\Qp$ be a finite extension and let $K_\infty/K$ be a strictly APF extension. It will be convenient later on to see $X_K(K_\infty)$ as a subring of Fontaine's ring $\Et$ which we define as follows: since $K_\infty/K$ is strictly APF, there exists by \cite[4.2.2.1]{Win83} a constant $c = c(K_{\infty}/K) > 0$ such that for all $F \subset F'$ finite subextensions of $K_\infty/K$, and for all $x \in \mathcal{O}_{F'}$, we have 
$$v_K(\frac{N_{F'/F}(x)}{x^{[F':F]}}-1) \geq c.$$
We can always assume that $c \leq v_K(p)/(p-1)$. If $F$ is a subfield of $\Cp$, let $\mathfrak{a}_F^c$ be the set of elements $x$ of $F$ such that $v_K(x) \geq c$, and let $\Etplus := \varprojlim_{x \mapsto x^d}\mathcal{O}_{\Cp}/\mathfrak{a}_{\Cp}^c$. By §2.1 and §4.2 of \cite{Win83}, there is a canonical $\G_K$-equivariant embedding $\iota_K : A_K(K_\infty) \hookrightarrow \Etplus$, where $A_K(K_\infty)$ is the ring of integers of $X_K(K_\infty)$. We can extend this embedding into a $\G_K$-equivariant embedding $X_K(K_\infty) \hookrightarrow \Et$ where $\Et$ is the fraction field of $\Etplus$, and we note $\E_K$ its image.

It will also be convenient to have the following interpretation for $\Etplus$: 
$$\tilde{\bf{E}}^+ = \varprojlim\limits_{x \to x^q} \mathcal{O}_{\C_p} = \{(x^{(0)},x^{(1)},\dots) \in \mathcal{O}_{\C_p}^{\N}~: (x^{(n+1)})^q=x^{(n)}\}.$$
To see that both definitions coincide, we refer to \cite[Prop. 4.3.1]{BrinonConrad}.

Note that, even though $\E_K$ depends on $K_\infty$ rather than on $K$, it is still sensitive to $K$:
\begin{prop}
\label{extension corps des normes bas depend de K}
Let $K'$ be a finite extension of $K$ contained in $K_\infty$. Let $K_1$ (resp. $K_1'$) be the maximal tamely ramified extension of $K_\infty/K$ (resp. $K_\infty/K'$). Then as subfields of $\Et$, $\E_{K'}$ is a purely inseparable extension of $\E_K$ of degree $[K_1':K_1]$. In particular, $\E_{K_1}=\E_K$.
\end{prop}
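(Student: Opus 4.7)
The plan is to compare $\E_K$ and $\E_{K'}$ inside $\Et$ via the Fontaine--Wintenberger embeddings $\iota_K$ and $\iota_{K'}$. Given $x = (x_E)_{E \in \mathcal{E}_{K_\infty/K}} \in X_K(K_\infty)$, its restriction $y = (x_E)_{E \in \mathcal{E}_{K_\infty/K'}}$ defines an element of $X_{K'}(K_\infty)$. Choosing compatible cofinal sequences for $K_\infty/K$ and $K_\infty/K'$, a direct calculation with the defining formula of $\iota$ yields an identity $\iota_{K'}(y)^{p^m} = \iota_K(x)$ in $\Et^+$ for some integer $m \geq 0$. This simultaneously gives $\iota_K(X_K(K_\infty)) \subseteq \iota_{K'}(X_{K'}(K_\infty))$ (since $\iota_K(x) = \iota_{K'}(y)^{p^m}$ lies in the subring $\iota_{K'}(X_{K'}(K_\infty))$) and $\iota_{K'}(X_{K'}(K_\infty)) \subseteq \iota_K(X_K(K_\infty))^{1/p^\infty}$ (by perfectness of $\Et^+$), so $\E_K \subseteq \E_{K'}$ and the extension $\E_{K'}/\E_K$ is purely inseparable.

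To pinpoint the degree, the next step is a tame-invisibility lemma: $\E_L = \E_{L'}$ whenever $L \subseteq L' \subseteq K_\infty$ with $L'/L$ tame. The idea is that any norm-compatible sequence indexed by $\mathcal{E}_{K_\infty/L'}$ extends uniquely to one indexed by $\mathcal{E}_{K_\infty/L}$ through the tame norms, and this extension is compatible with $\iota$. Specializing to $L = K$, $L' = K_1$ yields $\E_{K_1} = \E_K$, the final assertion of the proposition.

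One also checks $K_1' = K_1 \cdot K'$: indeed, $K_1 \cdot K'/K'$ is tame and $K_\infty/K_1 \cdot K'$ is pro-$p$ (as a subextension of the pro-$p$ extension $K_\infty/K_1$), so $K_1 \cdot K'$ is the maximal tame subextension of $K_\infty/K'$. Applying tame-invisibility to both $K$ and $K'$ yields $\E_K = \E_{K_1}$ and $\E_{K'} = \E_{K_1'}$, and one may replace $(K,K')$ by $(K_1, K_1')$ to assume $K = K_1$, so that $K_\infty/K$ is pro-$p$ and $K \subseteq K'$. In this reduced setting $[K':K]$ is a power of $p$, say $p^m$, and the key claim is that the exponent in the shift formula $\iota_{K'}(y)^{p^m} = \iota_K(x)$ is exactly this same $m$, which yields $[\E_{K'}:\E_K] = p^m = [K':K] = [K_1':K_1]$.

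The main obstacle I foresee is confirming that the exponent in the shift formula is exactly $\log_p[K':K]$ rather than a strictly smaller integer; this requires careful bookkeeping of the cofinal sequence against the definition of $\iota$ in the pro-$p$ regime. In practice, the cleanest path, which I would adopt, is to appeal directly to the functoriality of the field of norms construction established in \cite[\S 4.2]{Win83}, which packages both the pure inseparability and the exact degree formula.
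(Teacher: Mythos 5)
Your sketch is sound and matches the content behind the paper's one-line citation to \cite[Prop.~4.14]{cais2015canonical}; both ultimately rest on the functoriality of Wintenberger's field-of-norms embedding from \cite[\S 4.2]{Win83}, which you also invoke at the end. The auxiliary steps you supply --- restriction of norm-compatible sequences gives a bijection $X_K(K_\infty) \to X_{K'}(K_\infty)$, the identity $K_1' = K_1 \cdot K'$, and the tame-invisibility reduction to the pro-$p$ case where the Frobenius shift accounts for the degree --- are all correct, so this is essentially the same approach with the scaffolding behind the citation made explicit.
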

\begin{proof}
See \cite[Prop. 4.14]{cais2015canonical}.
\end{proof}

Assume now that $K_\infty/K$ is Galois and that the action of $\Gamma_K$ is liftable to $\A_K$, a $\varpi_E$-Cohen ring of $\E_K$. Let $\At = \mathcal{O}_E \otimes_{\mathcal{O}_{E_0}}W(\Et)$ and endow it with the $\mathcal{O}_E$-linear Frobenius map $\phi_q$ and the $\mathcal{O}_E$-linear action of $\G_K$ coming from those on $\Et$ (this is well defined since $K \supset E_0$). Let $\At_K = \At^{\Gal(\Qpbar/K_\infty)}$. 

\begin{prop}
\label{uembedding}
There is a $\G_K$-equivariant embedding $\A_K \hookrightarrow \At_K$ and compatible with $\phi_q$ that lifts the embedding $X_K(K_\infty) \hookrightarrow \Et_K=\Et^{\Gal(\Qpbar/K_\infty)}$.
\end{prop}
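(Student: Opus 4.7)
The plan is to construct an element $u \in \At_K$ that lifts $\pi_K \in \Et_K$ and satisfies $\phi_q(u) = P(u)$ and $g(u) = F_g(u)$ for every $g \in \Gamma_K$; then the embedding is the unique continuous extension of $T \mapsto u$. Because $u$ lifts the nonzero element $\pi_K$ in the DVR $\At$, it is a unit, so $T \mapsto u$ extends to $\mathcal{O}_E[\![T]\!][1/T]$ and, by $\varpi_E$-adic continuity, to all of $\A_K$. Injectivity follows because the reduction modulo $\varpi_E$ is the Fontaine-Wintenberger embedding $\E_K \hookrightarrow \Et_K$, and the two functional equations on $u$ translate precisely to $\phi_q$- and $\Gamma_K$-equivariance.

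To construct $u$ I would iterate Frobenius starting from the Teichmüller lift $[\pi_K] \in \Atplus$: set $u_n := \phi_q^{-n}(P^{\circ n}([\pi_K]))$. This is well defined because $\phi_q$ is an automorphism of $\At$ ($\Et$ being perfect) and each $P^{\circ n}([\pi_K])$ remains a unit. Using that $P$ has $\mathcal{O}_E$-coefficients (hence commutes with $\phi_q^{\pm n}$), one checks that $\phi_q(u_{n+1}) = P(u_n)$ and that every $u_n$ reduces to $\pi_K$ modulo $\varpi_E$. Convergence rests on the key estimate that the divided difference $P^*(a,b) := (P(a) - P(b))/(a - b)$ lies in $\varpi_E \At$ whenever $a \equiv b \pmod{\varpi_E}$: modulo $\varpi_E$ it specializes to $q\bar a^{q-1}$, which vanishes in the characteristic $p$ ring $\Et$. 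Combined with the identity $\phi_q(u_{n+1} - u_n) = P(u_n) - P(u_{n-1})$, this gives $u_{n+1} - u_n \in \varpi_E(u_n - u_{n-1})$; induction then yields $u_n - u_{n-1} \in \varpi_E^n \At$, so the sequence converges in $\At$ to a solution $u$ of $\phi_q(u) = P(u)$.

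The same estimate also gives uniqueness: two elements of $\At$ with the same reduction and both satisfying $\phi_q(X) = P(X)$ must coincide. I would apply this twice. For $h \in \Gal(\Qpbar/K_\infty)$, $h$ fixes $\pi_K$ and hence $[\pi_K]$ and every $u_n$, so $h(u) = u$, placing $u$ in $\At_K$. For $g \in \Gamma_K$, both $g(u)$ and $F_g(u)$ lift $g(\pi_K) = \overline{F}_g(\pi_K)$ and both satisfy $\phi_q(X) = P(X)$, the latter because $\phi_q$ commutes with the power series $F_g$ (again since $F_g$ has $\mathcal{O}_E$-coefficients) and because of the liftability identity $F_g \circ P = P \circ F_g$; uniqueness then forces $g(u) = F_g(u)$. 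The main obstacle is the convergence estimate, which relies crucially on the fact that $\overline{P}(T) = T^q$ combined with the vanishing of $q$ in characteristic $p$; once that is in hand, everything else is a formal consequence.
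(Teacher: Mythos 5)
Your proof is correct and coincides with the standard argument in the references the paper cites for this proposition (Fontaine, A.1.3, and Berger, \S 3): the Frobenius iteration $u=\lim_n\phi_q^{-n}\bigl(P^{\circ n}([\pi_K])\bigr)$, convergence via the divided-difference estimate (which works precisely because $\overline{P}(T)=T^q$ makes $(T^q)^*(a,b)\equiv q\,\bar a^{q-1}=0$ in characteristic $p$), and the resulting uniqueness of solutions to $\phi_q(X)=P(X)$ with prescribed reduction, applied twice to place $u$ in $\At_K$ and to get $g(u)=F_g(u)$. The paper itself merely cites these sources rather than repeating the construction, so your write-up is a faithful expansion of the intended proof.
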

\begin{proof}
See \cite[A.1.3]{Fon90} or \cite[§3]{Ber13lifting}.
\end{proof}

To simplify the notations, we will still denote $\A_K$ for its image by this embedding. As stated in the previous section, we have the following lemma which is a consequence of Hensel's lemma:

\begin{lemm}
\label{lemm existunique extension étale cohen}
Let $L$ be a finite extension of $K$. Then there exists a unique étale extension of $\varpi_E$-rings $\A_L/\A_K$ in $\At$ that lifts $\E_L/\E_K$.
\end{lemm}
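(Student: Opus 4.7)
The plan is to use Hensel's lemma in the $\varpi_E$-adically complete ring $\At$, exactly as in the proof of a Cohen ring being a strict henselization of its residue field in the unramified case.

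First, since $\E_L/\E_K$ is a finite separable extension of local fields of characteristic $p$ (via the field of norms functor, it equals $X_K(L_\infty)/X_K(K_\infty)$), I would pick a primitive element $\bar x \in \E_L$ with minimal polynomial $\bar P(X) \in \E_K[X]$, so $\bar P$ is monic, separable, of degree $[\E_L:\E_K]$, and $\bar P'(\bar x) \ne 0$. Lifting each coefficient coordinatewise from $\E_K \subset \A_K$ produces a monic polynomial $P(X) \in \A_K[X]$ reducing to $\bar P$.

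Next, working inside $\At$, which is $\varpi_E$-adically complete and separated (it is a $\varpi_E$-adic completion of a ring of Witt vectors tensored up to $\mathcal{O}_E$), and in which $\bar x \in \E_L \subset \Et$ already lives via the embedding fixed in the discussion before the lemma, I apply Hensel's lemma: since $\bar P'(\bar x)$ is a unit in $\Et^{\mathrm{Gal}(\Qpbar/L_\infty)}$, there is a unique $x \in \At$ lifting $\bar x$ and satisfying $P(x) = 0$. Note $x$ is $\mathrm{Gal}(\Qpbar/L_\infty)$-fixed by the uniqueness, hence lies in $\At^{\mathrm{Gal}(\Qpbar/L_\infty)}$. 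I then define $\A_L := \A_K[x] \subset \At$. It is a free $\A_K$-module of rank $[\E_L:\E_K]$, hence $\varpi_E$-adically complete, with $\A_L/\varpi_E\A_L = \E_K[\bar x] = \E_L$ sitting inside $\Et$ in the correct way; moreover, since $P'(x)$ lifts the unit $\bar P'(\bar x)$, it is a unit in $\A_L$, so $\A_L = \A_K[X]/(P(X))$ is étale over $\A_K$ in the classical sense.

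For uniqueness, suppose $\A_L'/\A_K$ is another such extension inside $\At$. Because $\A_L'$ is étale over the $\varpi_E$-adically complete ring $\A_K$ and lifts $\E_L = \E_K[\bar x]$, Hensel's lemma applied inside $\A_L'$ produces a lift $y \in \A_L'$ of $\bar x$ with $P(y)=0$. But then $x$ and $y$ are both roots of $P$ in $\At$ lifting $\bar x$, and by the uniqueness in Hensel's lemma applied in $\At$, we have $x = y$. Thus $\A_K[x] \subseteq \A_L'$, and since both are free $\A_K$-modules of the same rank $[\E_L:\E_K]$, they coincide.

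The only mildly delicate point is the verification that $\At$ is $\varpi_E$-adically complete and separated so that Hensel applies there — this is immediate from the construction $\At = \mathcal{O}_E \otimes_{\mathcal{O}_{E_0}} W(\Et)$ and the fact that $W(\Et)$ is $p$-adically complete and separated. The rest is essentially bookkeeping; there is no serious obstacle, and the result is really a restatement of the strict henselization property for the Cohen ring $\A_K$ realized inside $\At$.
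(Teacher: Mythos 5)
Your proof is correct and follows exactly the route the paper indicates: the paper's proof is just a pointer to the discussion before Theorem 1.3 of Berger's paper, where the extension $\A_L/\A_K$ is produced by Hensel's lemma from the finite separable extension $\E_L/\E_K$, and you have simply fleshed out that Hensel argument (lift a primitive element's minimal polynomial, Newton-lift the root in the $\varpi_E$-adically complete ring $\At$, check Galois invariance and the étale/rank properties, then use Hensel uniqueness in $\At$ to force any competing $\A_L'$ to contain the same root). The one step stated a bit quickly is the identification $\A_K[x]/\varpi_E\A_K[x] = \E_L$ together with the injectivity of $\A_K[X]/(P) \to \A_K[x]$; this follows since $\At$ is a domain and a Nakayama argument kills the kernel, but it's worth a line.
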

\begin{proof}
See for example the discussion before \cite[Thm. 1.3]{Ber13lifting}.
\end{proof}

Let $u \in \At_K$ be the image of $T$ by the embedding given by proposition \ref{uembedding}, so that $\phi_q(u) = P(u)$ and $g(u) = F_g(u)$ for $g \in \Gamma_K$. Assume that the lift is of finite height, that is $P(T) \in \mathcal{O}_E[\![T]\!]$. Then we have the following:

\begin{lemm}
\label{lemm hauteur finie Atplus}
If $P(T) \in \mathcal{O}_E[\![T]\!]$ then $u \in \Atplus$.
\end{lemm}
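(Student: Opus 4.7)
The plan is to construct an element $v \in \Atplus$ satisfying $\phi_q(v) = P(v)$ and $\overline{v} = \pi_K$, and then to show that such a solution in $\At$ is unique; applying this to $u$ and $v$ will yield $u = v \in \Atplus$. A preliminary point: the hypothesis $P \in \mathcal{O}_E[\![T]\!]$ combined with $\overline{P}(\pi_K) = \pi_K^q$ forces, via the identification $\E_K = k_K(\!(T)\!)$, the equality $\overline{P}(T) = T^q$ in $k_K[\![T]\!]$, so that $P(T) - T^q \in \varpi_E \mathcal{O}_E[\![T]\!]$ and $\overline{P'} = 0$ (since $q = 0$ in $k_K$).

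For the construction, set $v_0 := [\pi_K] \in \Atplus$ and build $v_n \in \Atplus$ inductively with $v_{n+1} \equiv v_n \pmod{\varpi_E^{n+1}}$ and $\phi_q(v_n) - P(v_n) \in \varpi_E^{n+1}\Atplus$; that the obstruction lies in $\varpi_E^{n+1}\Atplus$ (and not merely in $\varpi_E^{n+1}\At$) uses $\Atplus \cap \varpi_E^{n+1}\At = \varpi_E^{n+1}\Atplus$, which follows from the injectivity of $\Etplus \hookrightarrow \Et$. Writing $\phi_q(v_n) - P(v_n) = \varpi_E^{n+1}c_n$ with $c_n \in \Atplus$, and seeking $v_{n+1} = v_n + \varpi_E^{n+1}b$ with $b \in \Atplus$, a formal Taylor expansion yields
$$\phi_q(v_{n+1}) - P(v_{n+1}) \equiv \varpi_E^{n+1}\bigl(c_n + \phi_q(b) - P'(v_n)\, b\bigr) \pmod{\varpi_E^{n+2}}.$$
Reducing modulo $\varpi_E$ and using $\overline{P'(v_n)} = 0$, the new obstruction vanishes as soon as $\overline{b}^q = -\overline{c_n}$ in $\Etplus$; this has a solution because $\Etplus$ is perfect, so one may take $b$ to be the Teichmüller lift of the $q$-th root. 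The resulting sequence is $\varpi_E$-adically Cauchy in $\Atplus$ and converges to the desired $v$.

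For uniqueness in $\At$, suppose $u, v \in \At$ both solve $\phi_q(x) = P(x)$ with reduction $\pi_K$. Then $d := u - v \in \varpi_E \At$; if $d \neq 0$, write $d = \varpi_E^n d_n$ with $n \geq 1$ and $d_n \notin \varpi_E\At$. A formal Taylor expansion gives $P(u) - P(v) \equiv P'(v)\, d \pmod{d^2 \At}$, and $d^2 \in \varpi_E^{2n}\At \subseteq \varpi_E^{n+1}\At$ since $n \geq 1$; dividing $\phi_q(d) = P(u) - P(v)$ by $\varpi_E^n$ and reducing modulo $\varpi_E$ gives $\overline{d_n}^q = \overline{P'(v)}\cdot \overline{d_n} = 0$ in the domain $\Et$, contradicting $\overline{d_n} \neq 0$. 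Hence $u = v \in \Atplus$.

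The main obstacle is the refinement step in the construction. It hinges on the vanishing of $\overline{P'}$, provided by the finite-height hypothesis, together with the perfectness of $\Etplus$ which allows the obstruction to be killed by extracting a $q$-th root inside $\Atplus$; without both of these, the induction cannot be carried out within $\Atplus$.
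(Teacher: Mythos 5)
Your proof is correct, and it proceeds by the same essential idea as the cited source (Berger, \emph{Lifting the field of norms}, Lemma 3.1): produce a solution of $\phi_q(x)=P(x)$ with reduction $\pi_K$ inside $\Atplus$ by successive approximation, the key point being $\overline{P}(T)=T^q$ and hence $\overline{P'}=0$ in characteristic $p$, so that the obstruction at each step can be killed by extracting a $q$-th root in the perfect ring $\Etplus$; then identify $u$ with this solution. Your presentation packages the approximation as an explicit Newton-type induction together with a separate uniqueness argument in $\At$, whereas one can also directly verify that $u=\lim_n P^{\circ n}\bigl(\phi_q^{-n}([\pi_K])\bigr)$ $\varpi_E$-adically with each term visibly in $\Atplus$; these are minor variants of one and the same argument, and all the technical points you rely on ($\Atplus\cap\varpi_E^{m}\At=\varpi_E^{m}\Atplus$ via uniqueness of Teichm\"uller expansions, $\varpi_E$-adic completeness of $\Atplus$, convergence of $P$ evaluated at elements with topologically nilpotent reduction) are standard and valid.
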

\begin{proof}
See \cite[Lemm. 3.1]{Ber13lifting}.
\end{proof}

\begin{defi}
If $\A_K \subset \At$ is a $\varpi_E$-Cohen ring of $\E_K$, we define $\A_K^+:=\A_K \cap \Atplus$, where $\Atplus = \mathcal{O}_E \otimes_{\mathcal{O}_{E_0}}W(\Etplus)$.
\end{defi}

Even though the assumption that the lift is of finite height might seem reasonable, it is a really strong assumption, as it is not stable under base change. Consider for example the cyclotomic case over $\Qp$, so that we can take $P(T) = (1+T)^p - 1$ and $F_g(T) = (1+T)^{\chi(g)}-1$, where $\chi$ is the cyclotomic character, and $u = [\epsilon]-1 \in \Atplus$. We let $\A_{\Qp}$ be the $p$-adic completion of $\Z_p[\![u]\!][1/u]$ and $\A_{\Qp}^+ = \Zp[\![u]\!]$. Consider $K$ a finite extension of $\Qp$, and let $K_\infty = K(\mu_{p^\infty})$, the cyclotomic extension of $K$. Let $\Gamma_K=\Gal(K_\infty/K)$, and let $\A_K$ be the unique étale extension of $\A_\Qp$ corresponding to the extension $K_\infty/\Qp(\mu_{p^\infty})$ by the field of norms theory and lemma \ref{lemm existunique extension étale cohen}. Lemma \ref{lemm hauteur finie Atplus} shows that, if there exists a finite height lift of $K_\infty/K$, then $\A_K^+$ generates $\A_K$, in the sense that there exists $v \in \A_K^+$ such that $\A_K$ is the $p$-adic completion of $\mathcal{O}_E[\![v]\!][1/v]$. However, Wach's remark in \cite[Rem. p. 393]{wach1996representations} and Herr's remark in \cite[§ 1.1.2.2]{herr1998cohomologie} show that the only finite extensions $K$ of $\Qp$ for which the action of $\Gal(K_\infty/K)$ is liftable are the ones for which $K_\infty/\Qp(\mu_{p^\infty})$ is unramified.

One can wonder if Wach's and Herr's remarks still hold in any case of a finite height lift:

\begin{enonce}{Question}
Let $K_\infty/K$ be a strictly APF extension which is Galois and such that the action of $\Gamma_K = \Gal(K_\infty/K)$ is liftable of finite height on $\A_K$. What are the finite extensions $L$ of $K$ such that $\A_L^+$ generates $\A_L$ ? Are they only the extensions $L$ of $K$ for which $L_\infty/K_\infty$ is unramified ?
\end{enonce}

We finish this section with some results about the canonical Cohen rings for norm fields of Cais and Davis \cite{cais2015canonical}. They defined a ring $\A_{K_\infty/K}^+$, canonically attached to a strictly APF extension $K_\infty/K$:

\begin{defi}
\label{defi canonical CaisDavis}
Let $K$ be a finite extension of $\Qp$, let $\pi$ be a uniformizer of $\mathcal{O}_K$, let $K_\infty/K$ be a strictly APF extension, and let $\{K_m\}_{m \geq 0}$ be the tower of elementary extensions of $K_\infty/K$ as in \cite[§1.3]{Win83}. Define
$$\A_{K_\infty/K}^+:=\left\{(\underline{x}_{q^i})_i \in \varprojlim W_\pi(\mathcal{O}_{\widehat{K_\infty}}) : \underline{x}_{q^j} \in W_\pi(\mathcal{O}_{K_m}) \textrm{ whenever } q^j|[K_m:K_1] \right\},$$
viewed as a subring of $\mathcal{O}_K \otimes_{\mathcal{O}_{K_0}}W(\Etplus)$ \textit{via} \cite[Prop. 5.1]{cais2015canonical}, and where the $W_\pi$ are some generalized Witt vectors defined in \cite[§3]{cais2015canonical}.
\end{defi}

\begin{prop}
\label{prop embedding canonical Cohen ring}
Let $L_\infty$ be a finite extension of $K_\infty$ with wild ramification degree $a$, and let $b \in \N$ be such that $a \leq q^b$. Then the map 
$$\A_{K_\infty/K}^+ \rightarrow \A_{L_\infty/K}^+$$
given by
$$(\underline{x}_{q^i})_i \mapsto (\underline{x}'_{q^j})_j, \quad \underline{x}'_{q^{j+b}}:=\underline{x}_{q^j}$$
is a well defined embedding.
\end{prop}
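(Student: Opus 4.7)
The map clearly comes from restricting an evident shift operator on the ambient ring $\mathcal{O}_K \otimes_{\mathcal{O}_{K_0}} W(\Etplus)$, so it is automatically a ring homomorphism and injective. The entire content of the proposition is therefore well-definedness: I need to check that, for $(\underline{x}_{q^i})_i \in \A_{K_\infty/K}^+$, the shifted sequence actually lies in $\A_{L_\infty/K}^+$. This splits into (i) compatibility as a sequence in $\varprojlim W_\pi(\mathcal{O}_{\widehat{L_\infty}})$, which is routine bookkeeping (for $j \geq b$ the formula $\underline{x}'_{q^j}:=\underline{x}_{q^{j-b}}$ is explicit, and the remaining terms for $0 \leq j < b$ are forced by the transition maps of the inverse system, with compatibility descending from the inclusion $\widehat{K_\infty} \subset \widehat{L_\infty}$), and (ii) the subfield condition of Definition \ref{defi canonical CaisDavis} relative to the elementary tower $\{L_m\}$ of $L_\infty/K$.

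Point (ii) is where the real work lies, and requires comparing the elementary towers of $K_\infty/K$ and of $L_\infty/K$. My plan is to combine Wintenberger's combinatorial description of elementary subextensions in \cite[§1.3]{Win83} with the hypothesis $a \leq q^b$ on the wild ramification degree of $L_\infty/K_\infty$ to establish an index-shift relation of the form $K_m \subset L_{m+b}$, with the extra wild ramification encoded in $a$ being exactly absorbed by the shift $b$ (a possible residual tame factor is harmless since $L_1$ already contains the maximal tame subextension of $L_\infty/K$). Given this relation, the subfield check becomes a direct divisibility computation: if $q^{j+b}$ divides $[L_m:L_1]$, then the tower comparison gives $q^j \mid [K_{m-b}:K_1]$, so the hypothesis on $(\underline{x}_{q^i})_i$ yields $\underline{x}_{q^j} \in W_\pi(\mathcal{O}_{K_{m-b}}) \subset W_\pi(\mathcal{O}_{L_m})$, and hence $\underline{x}'_{q^{j+b}} \in W_\pi(\mathcal{O}_{L_m})$, as required.

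The hard part will be precisely this tower comparison: turning the bound $a \leq q^b$ into a clean index shift between two different APF towers. Beyond the direct calculation with Wintenberger's numerical invariants of the ramification filtration, I would expect to appeal to the functoriality results for canonical Cohen rings established in \cite[§5]{cais2015canonical}, since those results were set up with exactly this type of finite-extension shift in mind and their proofs already contain the needed combinatorics to translate $a \leq q^b$ into the desired shift by $b$ in the index of the elementary tower.
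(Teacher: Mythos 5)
Your proposal matches the paper's approach: the paper's proof is a one-line citation to the first part of the proof of Theorem 6.9 in \cite{cais2015canonical}, and your sketch is essentially a reconstruction of that argument, culminating --- as you yourself acknowledge --- in an appeal to the same Cais--Davis tower-comparison combinatorics. Your proposed index-shift relation between the elementary towers and the accompanying divisibility argument correctly identify where the hypothesis $a \leq q^b$ enters, which is precisely the content of the cited Cais--Davis proof.
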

\begin{proof}
The proof is the same as the first part of the proof of \cite[Thm. 6.9]{cais2015canonical}.
\end{proof}

\begin{rema}
Note that this ring $\A_{K_\infty/K}^+$ is \textit{a priori} not related to the ring $\A_K$.
\end{rema}

\section{Relative Lubin-Tate groups}
\label{LTrelatif}
In this section, we quickly recall de Shalit's construction (see \cite{de1985relative}) of a family of formal groups that generalize Lubin-Tate groups and the results we will need about them.

Let $F$ be a finite extension of $\Qp$, with ring of integers $\mathcal{O}_F$ and residue field $k_F$ of cardinal $q$. Let $h \geq 1$ and let $E$ be the unramified extension of $F$ of degree $h$. Let $\phi_q:E \to E$ be the $F$-linear Frobenius map that lifts $[x \mapsto x^q]$. If $f(T) = \sum_{i \geq 0}f_iT^i \in E[\![T]\!]$, let $f^{\phi_q}(T) = \sum_{i \geq 0}\phi_q(f_i)T^i$.

For $\alpha \in \mathcal{O}_F$ such that $v_F(\alpha) = h$, let $\mathcal{F}_{\alpha}^r$ be the set of power series $f(T) \in \mathcal{O}_E[\![T]\!]$ such that $f(T) = \pi T+ O(T^2)$ with $N_{E/F}(\pi) = \alpha$ and such that $f(T) \equiv T^q \mod \mathfrak{m}_E[\![T]\!]$. Then $\mathcal{F}_{\alpha}^r$ is non empty since $N_{E/F}(E^{\times}) = \{x \in F^{\times}, v_F(x) \in h\cdot \Z \}$ and we have the following results:
\begin{theo}
\label{Résultats LTrelatif}
If $f(T) \in \mathcal{F}_{\alpha}^r$ then:
\begin{enumerate}
\item there is a unique formal group law $S(X,Y) \in \mathcal{O}_E[\![X,Y]\!]$ such that $S^{\phi_q} \circ f = f \circ S$, and the isomorphism class of $S$ depends only on $\alpha$,
\item for all $a \in \mathcal{O}_F$, there is a unique power series $[a](T) \in \mathcal{O}_E[\![T]\!]$ such that $[a](T) = aT + O(T^2)$ and $[a](T) \in \mathrm{End}(S)$.
\end{enumerate}
Let $x_0 = 0$ and for $m \geq 1$, let $x_m \in \Qpbar$ such that $x_m = x_{m-1}$ with $x_1 \neq 0$. Let $\Lambda_m = \{x \in \Qpbar~: [\pi^m](x)=0 \}$ and $\Lambda = \bigcup \Lambda_m$ and let $E_m = E(x_m)$ and $E_{\infty}^S = \bigcup_{m \geq 1} E_m$. We have:
\begin{enumerate}
\item $E_m = E(\Lambda_m)$ and the fields $E_m$ depend only on $\alpha$ and not on the choice of $f(T) \in \mathcal{F}_{\alpha}^r$.
\item The extension $E_m/E$ is Galois with Galois group isomorphic to $(\mathcal{O}_F/\mathfrak{m}_F^m)^{\times}$.
\item If $g \in \Gal(\Qpbar/E)$, there is a unique $\chi_{\alpha}(g) \in \mathcal{O}_F^{\times}$ such that for all $\lambda \in \Lambda$, $g(\lambda) = [\chi_{\alpha}(g)]_f(\lambda)$. 
\item $\Gal(E_{\infty}^S/E) \simeq \mathcal{O}_F^{\times}$ and that isomorphism is given by $g \mapsto \chi_{\alpha}(g)$.
\item $E_{\infty}^S \subset F^{\mathrm{ab}}$ and $E_{\infty}^S$ is the subfield of $F^{\mathrm{ab}}$ cut out by $\langle \alpha \rangle \subset F^{\times}$ by local class field theory.
\end{enumerate}
\end{theo}
\begin{proof}
See \cite{de1985relative} and \cite{iwasawa1986local}.
\end{proof}

In order not to confuse relative Lubin-Tate characters with the classical ones, we will note if needed $\chi_{\pi}^F$ the classical Lubin-Tate character associated with the uniformizer $\pi$ of $F$. If $u \in \mathcal{O}_E^\times$, we note $\mu_u^E$ the unramified character of $\G_E$ that sends $\mathrm{Frob}_E$ to $u$.

\begin{lemm}
\label{classfield}
With these notations, we have $\chi_{\alpha}(g) = \chi_{\varpi}^F(g)\cdot\mu_u^E(g)$ where $\alpha = \varpi^hu$. In particular, the action of $\Gal(\Qpbar/E)$ on the torsion points of $S$ is given by $g(x) = [\chi_\varpi^F\cdot\mu_u^E(g)](x)$.
\end{lemm}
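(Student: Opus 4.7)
The plan is to verify the claimed identity of characters $\chi_\alpha = \chi_\varpi^F|_{\G_E} \cdot \mu_u^E \colon \G_E \to \mathcal{O}_F^\times$ after composition with the local reciprocity map $\mathrm{rec}_E \colon E^\times \to \G_E^{\mathrm{ab}}$, which is enough by local class field theory. Since $E/F$ is unramified of degree $h$, any uniformizer $\varpi$ of $F$ is also a uniformizer of $E$, and I would evaluate both sides on an arbitrary element of the form $y = \varpi^n v$ with $v \in \mathcal{O}_E^\times$.

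For the right-hand side, functoriality of reciprocity with the norm yields $\chi_\varpi^F|_{\G_E} \circ \mathrm{rec}_E = \chi_\varpi^F \circ \mathrm{rec}_F \circ N_{E/F}$. Computing $N_{E/F}(y) = \varpi^{nh} N_{E/F}(v)$ with $N_{E/F}(v) \in \mathcal{O}_F^\times$, and using the standard formula $\chi_\varpi^F(\mathrm{rec}_F(z)) = (z \varpi^{-v_F(z)})^{-1}$ for the classical Lubin-Tate character (normalized so that $\mathrm{rec}_F(\varpi)$ restricts to the arithmetic Frobenius on $F^{\mathrm{unr}}$), one obtains $\chi_\varpi^F(\mathrm{rec}_E(y)) = N_{E/F}(v)^{-1}$. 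Combined with $\mu_u^E(\mathrm{rec}_E(y)) = u^n$, the right-hand side evaluates to $u^n N_{E/F}(v)^{-1}$.

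For the left-hand side, Theorem~\ref{Résultats LTrelatif}(5) identifies $E_\infty^S$ as the subfield of $F^{\mathrm{ab}}$ whose norm subgroup is $\langle \alpha \rangle \subset F^\times$, so functoriality gives $\mathrm{rec}_E(y)|_{E_\infty^S} = \mathrm{rec}_F(N_{E/F}(y))|_{E_\infty^S}$. The relation $\varpi^h \equiv u^{-1} \pmod{\langle \alpha \rangle}$ (coming from $\alpha = \varpi^h u$) shows that the class of $N_{E/F}(y)$ in $F^\times/\langle \alpha \rangle$ is $u^{-n} N_{E/F}(v) \in \mathcal{O}_F^\times$. De Shalit's reciprocity compatibility identifies $\chi_\alpha$, on the image of $\mathcal{O}_F^\times$ under $\mathrm{rec}_F$, with the map $w \mapsto w^{-1}$, producing $\chi_\alpha(\mathrm{rec}_E(y)) = u^n N_{E/F}(v)^{-1}$, matching the right-hand side. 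The ``in particular'' assertion is then immediate by substituting the identity $\chi_\alpha = \chi_\varpi^F \cdot \mu_u^E$ into the formula $g(x) = [\chi_\alpha(g)](x)$ of Theorem~\ref{Résultats LTrelatif}(3).

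The main delicate point is really the bookkeeping of normalizations: one must fix a consistent convention (arithmetic vs.\ geometric Frobenius) for the reciprocity isomorphisms $\mathrm{rec}_F$ and $\mathrm{rec}_E$, and verify that the explicit description of $\chi_\alpha$ afforded by \cite{de1985relative, iwasawa1986local} is compatible with the formula used for $\chi_\varpi^F$. The classical limit $h=1$ (where the lemma reduces to the familiar twist formula $\chi_{\varpi u}^F = \chi_\varpi^F \cdot \mu_u^F$ governing a change of uniformizer) serves as a useful sanity check for the signs.
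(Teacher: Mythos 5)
Your proof is correct, and it takes a genuinely different route from the paper's. The paper argues directly at the level of formal groups: it constructs the Iwasawa isomorphism $\theta$ intertwining the classical Lubin--Tate group $S_f$ for $f(T)=\varpi T+T^q$ with the relative group $S_{f'}$ for $f'\in\cal F_\alpha$, invokes the relation $\theta^{\phi'}=\theta\circ[u]_f$ from \cite[Lemm.~5.14]{iwasawa1986local}, and then tracks how a Galois element $g$ acting as $\Frob_E^k$ on $\Q_p^{\mathrm{unr}}$ transports through $\theta$ to give the factor $u^k$. Your argument instead stays entirely within local class field theory: you compose both characters with $\mathrm{rec}_E$, reduce to the norm via functoriality, split off the class of $\varpi^{nh}=\alpha^n u^{-n}$ modulo $\langle\alpha\rangle$, and invoke the explicit reciprocity law for $\chi_\varpi^F$ and for $\chi_\alpha$ (on the image of $\mathcal O_F^\times$). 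The paper's computation buys an explicit isogeny and is self-contained modulo Iwasawa's book; yours is shorter and makes the unramified twist by $u$ transparent as arithmetic of $F^\times/\langle\alpha\rangle$, at the cost of importing the statement that $\chi_\alpha\circ\mathrm{rec}_F=(\,\cdot\,)^{-1}$ on $\mathcal O_F^\times$ from \cite{de1985relative}. You are right to flag the normalization of $\mathrm{rec}$ as the genuine delicacy: one must use the \emph{same} (arithmetic) convention for $\mathrm{rec}_F$, $\mathrm{rec}_E$, $\chi_\varpi^F$, $\mu_u^E$, and $\chi_\alpha$ throughout, and the $h=1$ sanity check $\chi_{\varpi u}^F=\chi_\varpi^F\cdot\mu_u^F$ is exactly the right test. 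One small point worth making explicit: the argument only shows the identity on the dense subgroup $\mathrm{rec}_E(E^\times)\subset\G_E^{\mathrm{ab}}$, so you should note that both sides are continuous characters factoring through $\G_E^{\mathrm{ab}}$, whence density finishes the job.
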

\begin{proof}
Every part of the proof is basically in \cite[§4]{iwasawa1986local} but this result is not actually stated in \cite{iwasawa1986local}. For the convenience of the reader, we give a full proof of this result.

Let $\pi \in E$ such that $N_{E/F}(\pi) = \alpha = \varpi^hu$. We also write $\phi$ and $\phi'$ for $\Frob_F$ and $\Frob_E$ respectively. Let $f(T) = \varpi T+T^q$ and $f'(T) \in \cal{F}_\alpha$. By \cite[Prop. 4.2]{iwasawa1986local}, there is a unique formal group law $S_f(X,Y)$ (resp. $S_{f'}(X,Y)$) on $\mathcal{O}_{\hat{Q_p^{\mathrm{unr}}}}$ such that $f \circ S_f = S_f^\phi \circ f$ (resp. $f' \circ S_{f'} = S_{f'}^{\phi'} \circ f'$). We write $\Lambda$ and $\Lambda'$ for the torsion points of $S_f$ and $S_{f'}$ respectively.

By the discussion after proposition 4.4 of \cite{iwasawa1986local} and \cite[Prop. 4.5]{iwasawa1986local}, there is a unique power series $\theta \in \mathcal{O}_{\hat{Q_p^{\mathrm{unr}}}}[\![T]\!]$ such that $\theta(T) = \epsilon T +O(T^2)$, where $\epsilon$ is a unit of $\mathcal{O}_{\hat{Q_p^{\mathrm{unr}}}}$, and such that:
\begin{enumerate}
\item $f' \circ \theta = \theta^{\phi} \circ f$,
\item $S_f^\theta = S_{f'}$,
\item $[a]_f^\theta = [a]_{f'}$ for all $a \in \mathcal{O}_F$.
\end{enumerate}
The power series $\theta$ is thus such that
$$f' \circ \theta = \theta^{\phi} \circ f.$$
Since $\theta$ induces an isomorphism $\cal{F}_f \to \cal{F}_{f'}$ ($\theta$ being invertible in $\mathcal{O}_{\Q_p^{\mathrm{unr}}}$ since $\theta'(0) = \epsilon$ is a unit), we get an isomorphism $\Lambda \to \Lambda'$ of $\mathcal{O}_F$-modules, and we have
$$[a]_f^{\theta} = [a]_{f'}.$$
Moreover, we have
$$\theta^{\phi'} = \theta \circ [u]_f$$
by \cite[Lemm. 5.14]{iwasawa1986local}. Now let $x \in \Lambda$ and $g \in \Gal(\Qpbar/E)$ such that $g$ acts as $\phi'^k$ on $\Q_p^{\mathrm{unr}}$. Then 
$$g(\theta(x)) = \theta^{\phi'^k}\circ g(x)$$
i.e.
$$[\chi_{\pi}(g)]_{f'}(\theta(x)) = \theta\circ [u]_f^k\circ[\chi_{\varpi}(g)]_f(x)$$
and since this holds for every $x \in \Lambda$,
$$[\chi_{\pi}(g)]_{f'}=[\chi_{\varpi}(g)u^k]_f^{\theta} = [\chi_{\varpi}(g)u^k]_{f'}.$$
Since $g$ acts on $\Lambda'$, that is on the torsion points of $S$, \textit{via} $g(x) = [\chi_{\pi}(g)]_{f'}(x)$, the previous equality tells us that $g$ acts on the torsion points of $S$ by $[\chi_{\varpi}(g)u^k]_{f'}=[\chi_{\varpi}^F(g)\mu_u^E(g)]_{f'}$.
\end{proof}~

\section{$\phi$-iterate extensions}
\label{extphiit}
In this section, we give a notion of ``$\phi$-iterate extension'' that generalizes the Lubin-Tate extensions and is a bit more general that the definitions of \cite{Ber14iterate} and \cite{cais2015canonical}. The notion of $\phi$-iterate extension will prove useful afterwards.

\begin{defi}
\label{defphiit}
Let $u_0 = \pi$ be a uniformizer of $\mathcal{O}_K$, let $d$ be a power of $q$ and let $P(T) \in \mathcal{O}_K[\![T]\!]$ a power series such that $P(0) = 0$ and $P(T) \equiv T^d \mod \pi\mathcal{O}_K$. Let $u_{n+1} \in \mathcal{O}_{\overline{K}}$ be a root of $P(T) - u_n$ and let $K_n =K(u_n)$ and $L = \bigcup K_n$.

We call extensions of this form $\phi$-iterate.
\end{defi}

\begin{rema}
This definition is a bit more general than the one of Berger in \cite{Ber14iterate} since he asks that$P(T)$ is a polynomial of the form $T^d+a_{d-1}T^{d-1}+\cdots+a_1T$ with the $a_i$ in $\mathfrak{m}_k$ for $1 \leq i \leq d-1$.\\
Our definition \ref{defphiit} is almost the same as the one of Cais and Davis in \cite{cais2015canonical}, the only difference is that they ask $d$ to be equal to $q$ in their definition.
\end{rema}

\begin{exem}~
\begin{enumerate}
\item If $P(T) = T^q$ and $\pi$ is a uniformizer of $K$,  the corresponding $\phi$-iterate extension $K_{\infty}/K$ is a Kummer corresponding to $\pi$.
\item If $S$ is a relative Lubin-Tate group, relative to an extension $E/F$ and $\alpha \in F$ as in our §1, then $E_{\infty}^S/E_1$ is $\phi$-iterate with $\phi(T) = [\alpha](T)$.
\end{enumerate}
\end{exem}

Cais and Davis actually proved the following for their definition of $\phi$-iterate extension, and the generalization to our definition is straightforward:
\begin{theo}
\label{relevement caisdavis}
Let $K_{\infty}/K$ be a $\phi$-iterate extension. Then the ring $\A_{K_\infty/K}^+$ defined in \ref{defi canonical CaisDavis} is a sub-$\mathcal{O}_K$-algebra of $\mathcal{O}_K \otimes_{W(k_K)}W(\Etplus)$, of the form $\A_{K_\infty/K}^+ = \mathcal{O}_K[\![u]\!]$, such that $X_K(K_\infty) = k_K(\!(\overline{u})\!)$, and $\mathcal{O}_K[\![u]\!]$ is endowed with the action of a Frobenius map $\phi_q$ and  an action of $\Aut(K_\infty/K)$ that commute one to another and lift the ones on $k_K(\!(\overline{u})\!)$. 
\end{theo}
\begin{proof}
See \cite[Thm. 1.2 et Thm. 1.4]{cais2015canonical}.
\end{proof}

In their article \cite{cais2015canonical}, Cais and Davis need to make the assumption that the $\phi$-iterate extension they consider is strictly APF. The following theorem, that they proved afterwards with Lubin, shows that this assumption is actually a consequence of their (and our) definition of a $\phi$-iterate extension:
\begin{theo}
A $\phi$-iterate extension is strictly APF.
\end{theo}
\begin{proof}
This is a direct consequence of the main theorem of \cite{cais2014characterization}.
\end{proof} 

In what follows, we will only consider $\phi$-iterate extensions $K_\infty/K$ that are Galois. In particular, in that case, theorem \ref{relevement caisdavis} proves that for such extensions, the action of $\Gamma_K = \Gal(K_\infty/K)$ is liftable of finite height.

\section{Lifts of finite height}
\label{relevementHT}
We now go back to the question of the strictly APF extensions $K_\infty/K$ which are Galois with Galois group $\Gamma_K = \Gal(K_\infty/K)$ and such that the action of $\Gamma_K$ is liftable of finite height. We will show in this section that such an extension is generated, up to a finite extension, by the torsion points of a relative Lubin-Tate group.

Let us now recall the hypotheses we will work with:

Let $K_\infty/K$ be a strictly APF extension which is Galois, with Galois group $\Gamma_K = \Gal(K_\infty/K)$. We suppose that the action of $\Gamma_K$ is liftable of finite height, so that there exists a finite extension $E$ of $\Qp$ with residue field $k_E \supset k_K$, $d$ a power of $q$ and power series $\{F_g(T)\}_{g \in \Gamma_K}$ and $P(T)$ in $\A_K$ such that:
\begin{enumerate}
\item $\overline{F}_g(\pi_K) = g(\pi_K)$ and $\overline{P}(\pi_K) = \pi_K^d$,
\item $F_g \circ P = P \circ F_g$ and $F_g \circ F_h = F_{hg}$ for all $g,h \in \Gamma_K$,
\item $P(T) \in \mathcal{O}_E[\![T]\!]$.
\end{enumerate}

Recall that there exists a $\G_K$-equivariant embedding $\iota_K: X_K(K_\infty) \to \tilde{\mathbf{E}}_K$ defined by \cite[Lemm. 4.11 and Coro. 4.12]{cais2015canonical}. Let $W_E(\cdot) = \mathcal{O}_E\otimes_{\mathcal{O}_{E_0}}W(\cdot)$ be the $\varpi_E$-Witt vectors. Let $\tilde{\mathbf{A}}=W_E(\tilde{\mathbf{E}})$, endowed with the $\mathcal{O}_E$-linear Frobenius map $\varphi_d$ and with the $\mathcal{O}_E$-linear action of $\G_K$ coming from those on $\tilde{\mathbf{E}}$. Let $\tilde{\mathbf{A}}_K = \tilde{\mathbf{A}}^{H_K}$ so that $\tilde{\mathbf{A}}_K = W_E(\tilde{\mathbf{E}}_K)$. We also define $\Atplus=W_E(\Etplus)$.

Recall that by proposition \ref{uembedding} there exists a $\Gamma_K$-equivariant embedding $\A_K \rightarrow \At_K$, compatible with $\phi_q$, that lifts the embedding $\iota_K : X_K(K_\infty) \rightarrow \Et_K$. Let $u$ be the image of $T$ by the embedding $\A_K \rightarrow \At_K$ so that $\phi_d(u)=P(u)$ and $u$ lifts $\iota_K(\pi_K)$. Recall that, by lemma \ref{lemm hauteur finie Atplus}, this element $u$ belongs to $\Atplus$.

Let us recall some results of section $4$ of \cite{Ber13lifting}, which allow us to improve on the regularity of the power series $P(T)$ and $F_g(T)$ for $g \in \Gamma_K$:
\begin{prop}
\label{reg}
For all $g \in \Gamma_K$, $F_g(0) = 0$.
\end{prop}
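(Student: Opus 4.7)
The plan is to reduce to the case where $P$ fixes $0$ via a change of variable, then apply the commutation relation at $T=0$ to identify $F_g(0)$ as a fixed point of $P$, and finally use the congruence $P(T) \equiv T^d \pmod{\varpi_E}$ to conclude that this fixed point must be $0$.

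First, I will invoke \cite[Lemm. 4.4]{Ber13lifting} to perform a change of variable on $\A_K$ of the form $T \mapsto T + \tau$, with $\tau \in \varpi_E \mathcal{O}_E$ the unique fixed point of $P$ in the open unit disk, after which the new series $P$ lies in $T \cdot \mathcal{O}_E[\![T]\!]$. This change of variable preserves the reductions (the new element still lifts a uniformizer of $X_K(K_\infty)$) and the commutation relations, so we may and do assume $P(0) = 0$.

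Next, evaluating $F_g \circ P = P \circ F_g$ at $T = 0$ yields $F_g(0) = P(F_g(0))$, so that $\alpha := F_g(0) \in \mathcal{O}_E$ is a fixed point of $P$. Reducing $F_g$ modulo $\varpi_E$, the identity $\overline{F}_g(\pi_K) = g(\pi_K)$ in $X_K(K_\infty) = k_K(\!(\pi_K)\!)$, combined with the fact that $g$ is an isometry of the field of norms so that $g(\pi_K)$ has positive $\pi_K$-adic valuation, forces $\overline{F}_g(T) \in T \cdot k_K[\![T]\!]$; in particular $\alpha \in \varpi_E \mathcal{O}_E$.

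Finally, I will rule out $\alpha \neq 0$ by a valuation estimate. Writing $P(T) = \sum_{i \geq 1} p_i T^i$, the congruence $P(T) \equiv T^d \pmod{\varpi_E}$ with $d \geq q \geq 2$ forces $p_i \in \varpi_E \mathcal{O}_E$ for every $i \neq d$, while $p_d \in \mathcal{O}_E^\times$. If $v := v_E(\alpha) \geq 1$, then each monomial $p_i \alpha^i$ has valuation at least $1 + v > v$ (for $i \neq d$) or equal to $dv \geq 2v > v$ (for $i = d$), hence $v_E(P(\alpha)) > v$, contradicting $P(\alpha) = \alpha$. Therefore $\alpha = 0$. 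The main technical point is the preliminary change of variable from \cite{Ber13lifting}; once $P(0)=0$ is secured, the remainder is a direct consequence of the ultrametric inequality.
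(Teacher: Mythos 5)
Your argument has a genuine gap at its central step. You write ``evaluating $F_g \circ P = P \circ F_g$ at $T=0$ yields $F_g(0) = P(F_g(0))$,'' and later treat $\alpha = F_g(0)$ as an element of $\mathcal{O}_E$; both of these implicitly assume that $F_g$ is a genuine power series in $\mathcal{O}_E[\![T]\!]$. But by hypothesis $F_g$ is only known to lie in $\A_K$, the $\varpi_E$-adic completion of $\mathcal{O}_E[\![T]\!][1/T]$, whose elements may have infinitely many negative powers of $T$ (with coefficients tending $\varpi_E$-adically to $0$). For such an element, ``evaluation at $T=0$'' is not defined, and the constant-term identity you want does not follow from the commutation relation. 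Your reduction-mod-$\varpi_E$ observation only shows that $\overline{F}_g(T) \in T\cdot k_K[\![T]\!]$, i.e.\ that the negative-power coefficients and the constant term all lie in $\varpi_E\mathcal{O}_E$; it does not show that the negative-power coefficients vanish. Killing the negative-power tail is precisely the content that the paper delegates to \cite[Prop.~4.2]{Ber13lifting} together with the correction in \cite{erratumBerger}, and it is the part of the statement where the real work lives (for instance, one needs to identify $\A_K \cap \Atplus$ with $\mathcal{O}_E[\![u]\!]$ or argue from $g(u) = F_g(u) \in \Atplus$, which is not immediate).

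The other two steps of your plan are sound given that missing input: the preliminary change of variable via \cite[Lemm.~4.4]{Ber13lifting} is legitimate and preserves the reductions and commutation relations, and once one knows that $F_g(0)$ is a well-defined element of $\varpi_E\mathcal{O}_E$ fixed by $P$, your Newton-polygon/ultrametric estimate correctly forces it to be $0$. Also note that the paper (and Berger's original) prove the statement $F_g(0)=0$ before performing the change of variable that normalizes $P(0)=0$, whereas you do the change of variable first; this reordering is harmless since the shift by $a\in\mathfrak{m}_E$ keeps the axioms intact, but it does not cure the gap described above. To complete the argument you would need to first establish $F_g \in \mathcal{O}_E[\![T]\!]$, which is exactly what the cited proposition and erratum supply.
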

\begin{proof}
See \cite[Prop. 4.2]{Ber13lifting} and the erratum \cite{erratumBerger}.
\end{proof}

Since we have $F_g(0)=0$, we can define a character $\eta : \Gamma_K \to \mathcal{O}_E^\times$ by $\eta(g)=F_g'(0)$.

\begin{lemm}
\label{chgtvar}
If $P(T) \in \mathcal{O}_E[\![T]\!]$, then there exists $a \in \mathfrak{m}_E$ such that, if $T'=T-a$ and $u'=u-a$, then $\phi_d(u') = Q(u')$ with $Q(T') \in T' \cdot \mathcal{O}_E[\![T']\!]$.
\end{lemm}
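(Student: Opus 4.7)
The plan is to interpret the desired substitution as a translation by a fixed point of $P$ lying in $\mathfrak{m}_E$. Concretely, if we could produce $a \in \mathfrak{m}_E$ with $P(a) = a$, then setting $u' = u - a$ and using that $\phi_d$ is $\mathcal{O}_E$-linear (so $\phi_d(a) = a$) would give
$$\phi_d(u') = \phi_d(u) - a = P(u) - a = P(u' + a) - a =: Q(u'),$$
with $Q(T') := P(T' + a) - a \in \mathcal{O}_E[\![T']\!]$ and $Q(0) = P(a) - a = 0$, so $Q(T') \in T' \cdot \mathcal{O}_E[\![T']\!]$ as required.

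Thus the lemma reduces to showing that $P$ admits a fixed point in $\mathfrak{m}_E$. For this I would apply Hensel's lemma to the series $g(T) := P(T) - T \in \mathcal{O}_E[\![T]\!]$. Since $P(T) \in \mathcal{O}_E[\![T]\!]$ and its reduction modulo $\varpi_E$ satisfies $\overline{P}(\pi_K) = \pi_K^d$ inside $k_K(\!(\pi_K)\!)$, one reads off that $\overline{P}(T) = T^d$ in $k_E[\![T]\!]$. Hence $\overline{g}(T) = T^d - T$, which has $T = 0$ as a simple root because $\overline{g}'(0) = -1 \neq 0$. Hensel's lemma then produces a unique $a \in \varpi_E \mathcal{O}_E = \mathfrak{m}_E$ with $g(a) = 0$, i.e.\ $P(a) = a$.

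There is no genuine obstacle in this argument; the only point that deserves a little attention is the identification $\overline{P}(T) = T^d$, which uses that $\pi_K$ plays the role of the formal variable in $k_K(\!(\pi_K)\!)$ so that equality of the evaluations at $\pi_K$ forces equality of the series. Once $a$ is in hand, the translation computation above is immediate and the resulting $Q$ visibly belongs to $T' \cdot \mathcal{O}_E[\![T']\!]$, which concludes the proof.
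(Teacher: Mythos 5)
Your proof is correct and follows essentially the same route as the paper: translate by a fixed point $a \in \mathfrak{m}_E$ of $P$, which exists because $\overline{P}(T) = T^d$ forces $P(T) - T$ to have a simple root in $\mathfrak{m}_E$. The only cosmetic difference is that you produce the fixed point via Hensel's lemma, whereas the paper (in the cited Berger lemma and in the analogous Lemma~\ref{ajuster w} here) phrases the same fact via the Newton polygon of $P(T)-T$; these are interchangeable since $P'(0)\in\mathfrak{m}_E$ makes $g'(0)=P'(0)-1$ a unit.
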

\begin{proof}
This is the same proof as in \cite[Lemm. 4.4]{Ber13lifting}.
\end{proof}

In the following, we consider that we have made such a change of variable and we keep the previous notations, so that $P(0)=0$ and $P(u)=\phi_d(u)$. 

\begin{lemm}
\label{Pdérivé}
We have $P'(0) \neq 0$.
\end{lemm}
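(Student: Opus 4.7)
The plan is to argue by contradiction: assume $P'(0) = 0$, and write $P(T) = \sum_{i \geq 1} a_i T^i$ with $a_i \in \mathcal{O}_E$. Let $k \geq 2$ be the smallest index with $a_k \neq 0$; since $P(T) \equiv T^d \pmod{\varpi_E}$ and the coefficient of $T^d$ is a unit, we necessarily have $2 \leq k \leq d$. So $0$ is a ``superattracting'' fixed point of $P$ of local degree $k$, and this is the pathology we aim to exclude.

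The key step is the construction of a formal Böttcher coordinate conjugating $P$ to the pure power $T^k$. I would look for $\theta(T) = b_1 T + b_2 T^2 + \cdots \in \Qpbar[\![T]\!]$ with $b_1 \neq 0$ such that $P \circ \theta = \theta \circ T^k$ as a formal power series identity. Comparing the $T^k$-coefficients gives $a_k b_1^{k-1} = 1$, which admits a solution in $\Qpbar^\times$; and for each $j \geq 2$, comparing the $T^{k+j-1}$-coefficients on both sides yields an equation of the shape $k \cdot b_j = (\textrm{polynomial in } b_1, \ldots, b_{j-1})$. Since $k$ is invertible in characteristic zero, this recursion is always solvable and unambiguously defines $\theta$.

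Next, for each $g \in \Gamma_K$, set $G_g := \theta^{-1} \circ F_g \circ \theta \in \Qpbar[\![T]\!]$. The commutation $F_g \circ P = P \circ F_g$ then translates into $G_g(T^k) = G_g(T)^k$. A direct coefficient comparison shows that any $G(T) = c_1 T + c_2 T^2 + \cdots \in \Qpbar[\![T]\!]$ with $c_1 \neq 0$ satisfying this identity must be linear: the $T^k$-coefficient forces $c_1^{k-1} = 1$, and then induction on $j$, comparing the $T^{k+j-1}$-coefficients (again using that $k$ is invertible), gives $c_j = 0$ for all $j \geq 2$. Thus $G_g(T) = c_g T$ for some $c_g \in \mu_{k-1}(\Qpbar)$, and the power series $F_g = \theta \circ (c_g T) \circ \theta^{-1}$ takes at most $k - 1$ distinct values as $g$ ranges over $\Gamma_K$.

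To finish: the assignment $g \mapsto F_g$ is injective, because if $F_g = F_h$ then $g$ and $h$ act identically on $u$, hence on all of $\A_K$; reducing modulo $\varpi_E$ and invoking the faithfulness of the action of $\Gamma_K$ on the field of norms $X_K(K_\infty)$ then forces $g = h$. Since $\Gamma_K = \Gal(K_\infty/K)$ is infinite (as $K_\infty/K$ is an infinite extension), this is impossible, proving $P'(0) \neq 0$. The main obstacle is setting up and justifying the formal Böttcher conjugation cleanly; once that is in hand, the rigidity of power series commuting with $T^k$, together with the faithfulness of the Galois action on $X_K(K_\infty)$, delivers the contradiction almost immediately.
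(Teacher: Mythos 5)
Your argument is correct, and it is essentially the same as the one in the cited reference \cite[Lemm.\ 4.5]{Ber13lifting}: the key point there, too, is that a superattracting fixed point (where $P'(0)=0$ but $P\not\equiv 0$) admits a formal B\"ottcher conjugacy to $T\mapsto T^k$ over $\Qpbar$, from which one deduces that the group of invertible series commuting with $P$ is finite cyclic of order dividing $k-1$; combined with the faithfulness of the $\Gamma_K$-action on the field of norms and the infinitude of $\Gamma_K$, this gives the contradiction. You have simply unwound the B\"ottcher construction explicitly (verifying that the recursion $k\,b_{m+1}=(\text{poly in }b_1,\dots,b_m)$ is solvable since $k\in\Qpbar^\times$, and that any $G$ with $G(T^k)=G(T)^k$ and $G'(0)\neq 0$ is linear) rather than invoking it as a black box, which is fine.
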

\begin{proof}
This is \cite[Lemm. 4.5]{Ber13lifting}.
\end{proof}

\begin{coro}
\label{injectif}
The character $\eta~: \Gamma_K \to \mathcal{O}_E^{\times}$ is injective.
\end{coro}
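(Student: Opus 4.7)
The plan is to show that each $F_g$ is uniquely determined by the scalar $\eta(g) = F_g'(0)$, so that if $\eta(g) = 1$ then $F_g(T) = T$; the faithfulness of the action of $\Gamma_K$ on the field of norms $X_K(K_\infty)$ then yields $g = 1$. Note first that $\eta$ is indeed a group homomorphism: differentiating $F_{hg} = F_g \circ F_h$ at $T = 0$ and using $F_h(0) = 0$ from Proposition~\ref{reg}, the chain rule gives $\eta(hg) = \eta(g)\eta(h)$, which equals $\eta(h)\eta(g)$ in the abelian group $\mathcal{O}_E^\times$.

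The key ingredient is a Lubin-type uniqueness statement: any $F \in \mathcal{O}_E[\![T]\!]$ satisfying $F(0) = 0$, $F'(0) = 1$ and $F \circ P = P \circ F$ is equal to $T$. Indeed, suppose for contradiction that $F(T) = T + c_n T^n + O(T^{n+1})$ with $n \geq 2$ minimal and $c_n \neq 0$. Using that $F(T)^k \equiv T^k \pmod{T^{n+1}}$ for every $k \geq 2$ and that $P(T) = P'(0)\,T + O(T^2)$, comparing the $T^n$-coefficients on both sides of $F \circ P = P \circ F$ and cancelling the common term collapses to
\[
c_n\, P'(0)\,\bigl(P'(0)^{n-1} - 1\bigr) = 0.
\]
Here the finite-height hypothesis is essential: after the change of variable of Lemma~\ref{chgtvar}, the reduction $\overline{P}(T)$ modulo $\varpi_E$ equals $T^d$ with $d \geq q \geq p \geq 2$, so that $P'(0) \in \mathfrak{m}_E$; combined with $P'(0) \neq 0$ (Lemma~\ref{Pdérivé}), this forces $P'(0)^{n-1} - 1$ to be a unit of $\mathcal{O}_E$, whence $c_n = 0$---a contradiction. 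Induction on $n$ gives $F(T) = T$.

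Applying the lemma to $F_g$ when $\eta(g) = 1$ gives $F_g(T) = T$, so that $g(u) = u$. Reducing modulo $\varpi_E$ shows that $g$ fixes $\pi_K \in X_K(K_\infty)$, and since $g \in \Gal(K_\infty/K)$ fixes $k_K$ as well, it acts trivially on all of $X_K(K_\infty) = k_K(\!(\pi_K)\!)$; faithfulness of the $\Gamma_K$-action on the field of norms (which follows from the fact that the compatible system $\pi_K$ reconstructs the tower $K_\infty/K$, each subextension being totally ramified and hence generated by its uniformizer) then yields $g = 1$. The only real obstacle in this argument is the uniqueness lemma, whose validity rests entirely on $P'(0)$ being a \emph{nonzero} element of the maximal ideal---a form of rigidity specific to the finite-height setting, with no straightforward analogue in general.
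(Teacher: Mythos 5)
Your proof is correct and takes essentially the same approach as the paper: reduce injectivity to showing that a power series in $T\cdot\mathcal{O}_E[\![T]\!]$ commuting with $P$ is determined by its derivative at $0$, then conclude from $g(u)=u \Rightarrow g=\id$. The only difference is that where the paper cites \cite[Prop.~1.1]{lubin1994nonarchimedean}, you reprove the relevant special case ($F'(0)=1$ forces $F=T$) directly by comparing $T^n$-coefficients, which is a fine and correct unpacking of that reference.
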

\begin{proof}
This is a straightforward consequence of \cite[Prop. 1.1]{lubin1994nonarchimedean} which tells us that if $Q'(0) \in \mathfrak{m}_K \setminus \{0\}$, then a power series $f(T) \in T\cdot \mathcal{O}_E[\![T]\!]$ which commutes with $Q$ is entirely determined by $f'(0)$. This implies that $F_g$ is determined by $\eta(g)$. In particular, we have $F_g(T)=T$ if and only if $g(u)=u$ and thus if and only if $g=\id$.
\end{proof}

Since we have $P'(0) \neq 0$, we can define a logarithm attached to $P$ as in \cite{lubin1994nonarchimedean}:
\begin{prop}
\label{loglubin}
There exists a unique power series $L_P(T) \in K[\![T]\!]$, holomorphic on the open unit disk and such that:
\begin{enumerate}
\item $L_P(T) = T+O(T^2)$ ;
\item $L_P \circ P(T) = P'(0)\cdot L_P(T)$ ;
\item $L_P \circ F_g(T) = \eta(g)\cdot L_P(T)$ for $g \in \Gamma_K$.
\end{enumerate}
\end{prop}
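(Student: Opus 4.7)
The plan is to set $\pi := P'(0)$ and to exploit the fact that, by Lemma~\ref{Pdérivé} together with the reduction $P(T) \equiv T^d \bmod \varpi_E$, we have $\pi \in \mathfrak{m}_E \setminus \{0\}$; in particular $|\pi^{n-1}| < 1$ for every $n \geq 2$, so $\pi(1 - \pi^{n-1})$ is a nonzero element of $\mathcal{O}_E$. First I would establish existence and uniqueness as a formal power series in $K[\![T]\!]$. Writing $P(T) = \pi T + \sum_{n \geq 2} p_n T^n$ with $p_n \in \mathcal{O}_E$, and looking for $L_P(T) = T + \sum_{n \geq 2} a_n T^n$, the equation $L_P \circ P = \pi L_P$ rewrites, coefficient by coefficient, as
$$\pi(1 - \pi^{n-1})\, a_n \;=\; p_n + Q_n(a_2, \dots, a_{n-1};\, p_2, \dots, p_{n-1}), \qquad n \geq 2,$$
for some universal polynomial $Q_n$ with integer coefficients (obtained by expanding $a_k \cdot [T^n] P(T)^k$ for $2 \leq k \leq n-1$). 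Since the left-hand factor is nonzero, this recursion determines $a_n$ uniquely in $K$, giving existence and uniqueness of a formal series satisfying (1) and (2).

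Next I would prove that $L_P$ is holomorphic on the open unit disk. The cleanest way is to reinterpret $L_P$ as the limit $L_P = \lim_{N \to \infty} \pi^{-N} P^{\circ N}$, which is the construction used by Lubin in \cite[§1]{lubin1994nonarchimedean}. The convergence in $K[\![T]\!]$ follows from the telescoping identity
$$\pi^{-(N+1)} P^{\circ (N+1)}(T) - \pi^{-N} P^{\circ N}(T) \;=\; \pi^{-(N+1)}\bigl(P(y) - \pi y\bigr), \qquad y := P^{\circ N}(T),$$
together with the facts that $P(y) - \pi y \in y^2 \mathcal{O}_E[\![y]\!]$ and that the $T$-adic valuation of $y$ is $1$: the standard Lubin estimates then yield both the coefficient-wise convergence and a bound on $v_p(a_n)$ which is sublinear in $n$, guaranteeing holomorphy on the open unit disk. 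This is the main technical step, but it is entirely contained in loc.~cit.

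Finally, property (3) will follow from the uniqueness of Step~1, which is the nicest part of the argument. For $g \in \Gamma_K$, set $M_g := L_P \circ F_g$. Using that $F_g$ commutes with $P$, Proposition~\ref{reg}, and the definition $\eta(g) = F_g'(0)$, we get
$$M_g \circ P \;=\; L_P \circ F_g \circ P \;=\; L_P \circ P \circ F_g \;=\; \pi\, L_P \circ F_g \;=\; \pi\, M_g,$$
while $M_g(T) = \eta(g)\, T + O(T^2)$. Therefore $\eta(g)^{-1} M_g$ satisfies exactly the same conditions (1) and (2) as $L_P$, so by uniqueness $\eta(g)^{-1} M_g = L_P$, i.e.\ $L_P \circ F_g = \eta(g) \cdot L_P$, which is (3).
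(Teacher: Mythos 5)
Your proof is correct and follows essentially the same path as the paper, which simply defers to Lubin's Propositions 1.2, 1.3 and 2.2 in \cite{lubin1994nonarchimedean} (formal recursion, limit $\pi^{-N}P^{\circ N}$ for holomorphy, uniqueness argument for equivariance under $F_g$); you have merely written out the details that the cited reference contains.
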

\begin{proof}
See propositions 1.2, 2.2 and 1.3 of \cite{lubin1994nonarchimedean} which also show that
$$L_P(T) = \lim\limits_{n \to +\infty} \frac{P^{\circ n}(T)}{P'(0)^n}=T\cdot \prod\limits_{n \geq 0}\frac{S(P^{\circ n}(T))}{S(0)},$$
where $P(T) = T\cdot S(T)$.
\end{proof}

We will use this logarithm to construct crystalline periods for $\eta$ as in \cite[§4]{Ber14iterate}. First, we define $\Btplus_{\mathrm{rig}}$ as the Fréchet completion of $\Atplus[1/p]$ as follows: every element of $\Btplus:=\Atplus[1/p]$ can be written as $\sum_{k \gg -\infty}\pi^k[x_k]$ where $\pi$ is a uniformizer of $\mathcal{O}_E$ and $\{x_k\}_{k \in \Z}$ is a sequence of $\Etplus$. Let $e$ be the ramification index of $E/\Qp$, and for $r \geq 0$, define a valuation $V(\cdot,r)$ on $\Btplus$ by 
$$V(x,r) = \inf_{k \in \Z}\left(\frac{k}{e}+\frac{p-1}{pr}v_{\E}(x_k)\right) \textrm{ if } x = \sum_{k \gg -\infty}\pi^k[x_k],$$
and let $V(x) = \inf_{r \geq 0}V(x,r)$. Then $\Btplus_{\mathrm{rig}}$ is defined as the completion of $\Btplus$ for the valuation $V$. If $u \in \Atplus$ is the image of $T$ by the embedding of proposition \ref{uembedding} (and belongs to $\Atplus$ by lemma \ref{lemm hauteur finie Atplus}), $L_P(u)$ converges in $\Btplus_{\mathrm{rig}}$, and we have $g(L_P(u))= \eta(g)\cdot L_P(u)$ by the third item of \ref{loglubin}. 

Let $\Sigma$ be the set of embeddings of $E$ in $\Qpbar$. If $\tau \in \Sigma$, let $n(\tau) \in \N$ be such that $\tau = \phi^{n(\tau)}$ on $k_K$, and let $u_{\tau}=(\tau\otimes\phi^{n(\tau)})(u) \in \Atplus_{\tau(E)}$. 

For $F(T) = \sum_{i \geq 0}f_iT^i \in E[\![T]\!]$, let $F^{\tau}(T) = \sum_{i \geq 0}\tau(f_i)T^i \in \tau(E)[\![T]\!]$. We then have $g(L_P^{\tau}(u_{\tau}))=\tau(\eta(g))\cdot L_P^{\tau}(u_{\tau})$ in $\Btplus_{\tau(E),\mathrm{rig}}$, with $\Btplus_{\tau(E),\mathrm{rig}}$ the Fréchet completion of $\Atplus_{\tau(E)}[1/p]$. We then get the following result, which is theorem 4.1 of \cite{Ber13lifting} :

\begin{prop}
The character $\eta: \Gamma \to \mathcal{O}_E^{\times}$ is de Rham, with nonnegative weights. 
\end{prop}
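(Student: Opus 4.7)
The plan is to promote the elements $L_P^\tau(u_\tau)$ built in the preceding paragraph into de Rham periods for each conjugate $\tau\circ\eta$ of $\eta$. Recall that for every $\tau \in \Sigma$ we have $L_P^\tau(u_\tau) \in \Btplus_{\tau(E),\mathrm{rig}}$ and $g(L_P^\tau(u_\tau)) = \tau(\eta(g)) \cdot L_P^\tau(u_\tau)$ for all $g \in \Gamma_K$. First I would use the natural $\G_K$-equivariant inclusion $\Btrigplus \hookrightarrow \Bdrplus$ (extended $\tau(E)$-linearly) in order to view each $L_P^\tau(u_\tau)$ as an element of $\Bdrplus = \Fil^0 \Bdr$ on which $\Gamma_K$ acts via $\tau\circ\eta$.

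The central step is to check that $L_P^\tau(u_\tau) \neq 0$ in $\Bdrplus$. For this I would exploit the product expansion of Proposition \ref{loglubin}, namely $L_P(T) = T \cdot \prod_{n \geq 0} S(P^{\circ n}(T))/S(0)$ with $P(T) = T\cdot S(T)$, each factor of which is a power series in $K[\![T]\!]$ equal to $1$ at $T=0$. Evaluated at the topologically nilpotent element $u \in \Atplus$, which lifts the nonzero uniformizer $\pi_K$ of $\Et_K$, the product converges in $\Btrigplus$ to $u$ multiplied by a unit, hence is nonzero; applying $\tau$ then yields $L_P^\tau(u_\tau) \neq 0$ in $\Btplus_{\tau(E),\mathrm{rig}}$, and a fortiori in $\Bdrplus$.

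To conclude, a nonzero element of $\Fil^0 \Bdr$ transforming under $\Gamma_K$ by $\tau\circ\eta$ suffices. Letting $i \geq 0$ be the largest integer with $L_P^\tau(u_\tau) \in \Fil^i \Bdr$, the image of $L_P^\tau(u_\tau)$ in $\mathrm{gr}^i \Bdr \simeq \Cp(i)$ is a nonzero $\Gamma_K$-equivariant element, and by Tate's theorem on Hodge-Tate characters this forces $\tau\circ\eta$ to be Hodge-Tate of weight $i \geq 0$. Since a one-dimensional Hodge-Tate representation is automatically de Rham, this gives the claim for every $\tau \in \Sigma$, i.e.\ $\eta$ is de Rham with nonnegative weights.

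I expect the main obstacle to be the nonvanishing step: one must verify that the infinite product $\prod_{n \geq 0} S(P^{\circ n}(u))/S(0)$ genuinely converges in the Fréchet topology of $\Btrigplus$ defined by the valuations $V(\cdot,r)$, and that its value there is a unit so that $L_P(u) = u \cdot (\text{unit})$ is nonzero (and stays so under the embedding into $\Bdrplus$). The remaining steps, in particular the passage from a nonzero period in $\Fil^0 \Bdr$ to the de Rham and weight conclusions, are essentially formal once this has been established.
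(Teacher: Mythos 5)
Your overall strategy coincides with the one behind the result the paper cites, namely Berger's Theorem~4.1 of \cite{Ber13lifting}: produce periods $L_P^\tau(u_\tau)\in\Btrigplus\subset\Bdrplus$ on which $\Gamma_K$ acts via $\tau\circ\eta$, and deduce the de~Rham statement from the existence of a nonzero such period in $\Fil^0\Bdr$. The formal part of your argument (passing from a nonzero element of $\Fil^0$ to a Hodge--Tate weight $\geq 0$, and then from Hodge--Tate to de~Rham for one-dimensional characters) is correct.

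However, the nonvanishing step, which you rightly identify as the main obstacle, is not established by your argument, and the specific mechanism you propose is false. You claim that $L_P(u)=u\cdot\prod_{n\geq 0}S(P^{\circ n}(u))/S(0)$ with the product a \emph{unit}. Apply $\theta\colon\Bdrplus\to\Cp$ to that product: since $\theta(\phi_d^n(u))=P^{\circ n}(\theta(u))\to 0$, its image is $\prod_{n\geq 0}S(P^{\circ n}(\theta(u)))/S(0)=L_P(\theta(u))/\theta(u)$. Now Propositions~\ref{existvtau} and~\ref{alg} together guarantee the existence of $\tau\in\Sigma$ with $\theta(u_\tau)\in\Lambda(P^\tau)$, hence $L_P^\tau(\theta(u_\tau))=0$. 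Nothing forces $\theta(u_\tau)$ to equal $0$ (the change of variable in Lemma~\ref{chgtvar} arranges $P(0)=0$, not $\theta(u)=0$), and if $\theta(u_\tau)\neq 0$ then the $\theta$-image of $\prod_n S^\tau\big((P^\tau)^{\circ n}(u_\tau)\big)/S^\tau(0)$ is $0$, so it cannot be a unit in $\Bdrplus$, let alone in $\Btrigplus$. Since $\tau\otimes\phi^{n(\tau)}$ is a ring isomorphism preserving invertibility, the original product $\prod_n S(P^{\circ n}(u))/S(0)$ is then not a unit in $\Btrigplus$ either. (In fact the convergence of the infinite product in the Fr\'echet topology of $\Btrigplus$ is itself not automatic: $\phi_d^n(u)\to 0$ there only if the Witt components $x_k$ of $u$ all satisfy $v_{\E}(x_k)>0$, which is not guaranteed for $k\geq 1$.)

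The nonvanishing of $L_P^\tau(u_\tau)$ is indeed true, but one has to look one step deeper into the filtration: writing $\iota_n(u_\tau)=v_n+\varepsilon_n$ with $\varepsilon_n\in\Fil^1\Bdr$ and using that $v_n$ is a simple zero of $L_P^\tau$, one gets $L_P^\tau(\iota_n(u_\tau))\equiv (L_P^\tau)'(v_n)\varepsilon_n\bmod\Fil^2\Bdr$, and the point is then to show that $\varepsilon_n\notin\Fil^2\Bdr$ (equivalently, that $\iota_n(u_\tau)$ does not coincide with the Hensel lift of $v_n$ to $\Bdrplus$). That is the crux of the argument, and the product formula alone does not deliver it.
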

\begin{proof}
See \cite[Thm. 4.1]{Ber13lifting}.
\end{proof}

In the case where $K=E$ and $K/\Qp$ is Galois, one can actually show the following:

\begin{prop}
\label{si K=E=Galois, car cristallin}
If $K=E$ and $K/\Qp$ is Galois, then $\eta: \Gamma \to \mathcal{O}_K^\times$ is crystalline with nonnegative weights.
\end{prop}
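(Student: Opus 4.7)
The strategy is to refine the periods constructed in the proof of the preceding proposition so that they actually land in $\B_{\mathrm{cris}}$ rather than merely in $\Bdrplus$. Recall that $L_P(u) \in \Btplus_{\mathrm{rig}}$ satisfies $g(L_P(u)) = \eta(g)\cdot L_P(u)$ for $g \in \Gamma_K$, and for each $\tau \in \Sigma = \Emb(E,\Qpbar)$ the element $L_P^{\tau}(u_\tau) \in \Btplus_{\tau(E),\mathrm{rig}}$ is a period for the $\tau$-conjugate character $\tau\circ \eta$.

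The first step is to observe that $\Btplus_{\mathrm{rig}} \subset \B_{\mathrm{cris}}$. This is the standard fact that the Fréchet completion of $\Atplus[1/p]$ is contained in $\B_{\mathrm{cris}}^+$ (via the inclusion in $\Btrigplus$ and then $\Btrigplus \subset \B_{\mathrm{cris}}^+$), so in particular $L_P(u) \in \B_{\mathrm{cris}}$. This already shows that $\eta$ itself, viewed through the identity embedding, has a crystalline period.

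The second step uses the Galois assumption in an essential way. Since $K = E$ is Galois over $\Qp$, every $\tau \in \Sigma$ satisfies $\tau(E) = K$, and we may lift $\tau$ to an element $\tilde{\tau} \in \G_{\Qp}$ acting as $\phi^{n(\tau)}$ on the maximal unramified subfield of $\Qpbar$. With this choice, $\tilde{\tau}$ acts on $\mathcal{O}_E = \mathcal{O}_K$ as $\tau$ and on $W(\Etplus)$ as $\phi^{n(\tau)}$, so that a direct computation gives $\tilde{\tau}(u) = (\tau\otimes \phi^{n(\tau)})(u) = u_\tau$. Since $L_P \in K[\![T]\!]$ and $\tilde{\tau}|_K = \tau$, the identity $\tilde{\tau}(L_P(u)) = L_P^{\tau}(u_\tau)$ follows by applying $\tilde{\tau}$ term by term. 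But $\B_{\mathrm{cris}}$ is stable under $\G_{\Qp}$ and $L_P(u) \in \B_{\mathrm{cris}}$, so we conclude $L_P^{\tau}(u_\tau) \in \B_{\mathrm{cris}}$ for every $\tau \in \Sigma$.

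In particular, every $\tau$-conjugate of $\eta$ admits a nonzero period in $\B_{\mathrm{cris}}$, so each $\tau \circ \eta$ is crystalline as a $\Qp$-valued character; this is equivalent to $\eta$ being crystalline as an $E$-linear character. The nonnegativity of the Hodge--Tate weights is inherited from the previous proposition, since de Rham weights and crystalline weights coincide when both notions apply. The main technical point to verify carefully will be the identity $\tilde{\tau}(L_P(u)) = L_P^{\tau}(u_\tau)$ together with the inclusion $\Btplus_{\mathrm{rig}} \subset \B_{\mathrm{cris}}$ in the (potentially) ramified setting $E = K$; both are essentially bookkeeping once the ring-theoretic embeddings and actions are made explicit.
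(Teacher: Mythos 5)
Your overall strategy — produce, for every $\tau\in\Sigma$, a nonzero period for $\tau\circ\eta$ lying in a crystalline period ring, and conclude — is the right one, and it is indeed what the paper does by citing \cite[Prop. 5.2]{Ber14iterate}. However, both of the intermediate claims on which your argument rests are incorrect.

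First, the inclusion ``$\Btplus_{\mathrm{rig}}\subset\B_{\mathrm{cris}}$'' is false as soon as $E/\Qp$ is ramified. With the paper's conventions $\Atplus=\mathcal{O}_E\otimes_{\mathcal{O}_{E_0}}W(\Etplus)$, so $\Btplus_{\mathrm{rig}}$ is a module over $E$ and contains $\varpi_E$. If $\varpi_E$ lay in $\B_{\mathrm{cris}}$, it would lie in $\B_{\mathrm{cris}}^{\G_E}=E_0$, which fails when $e(E/\Qp)>1$. What is true, and is what the crystalline condition for an $E$-linear character actually requires, is $\Btplus_{\mathrm{rig}}\subset E\otimes_{E_0}\B_{\mathrm{cris}}^+$; this is the standard inclusion of the Fréchet completion of $W(\Etplus)[1/p]$ into $\B_{\mathrm{cris}}^+$, base-changed along $E_0\to E$.

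Second, and more seriously, the identity $\tilde\tau(u)=u_\tau$ is not correct. A lift $\tilde\tau\in\G_{\Qp}$ of $\tau$ acts on $\Atplus=\mathcal{O}_E\otimes_{\mathcal{O}_{E_0}}W(\Etplus)$ as $\tau$ on the first factor and as the genuine \emph{Galois} action of $\tilde\tau$ on $W(\Etplus)$ on the second factor, whereas $u_\tau$ is defined by applying $\tau\otimes\phi^{n(\tau)}$, i.e.\ a pure power of the Witt Frobenius on the second factor. These are different: already modulo $p$, $\tilde\tau$ sends $\overline{u}=\iota_K(\pi_K)$ to $\tilde\tau(\iota_K(\pi_K))$ (the image of a conjugate norm-compatible system), not to $\overline{u}^{p^{n(\tau)}}=\phi^{n(\tau)}(\overline{u})$. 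So the purported equality $\tilde\tau(L_P(u))=L_P^\tau(u_\tau)$ does not hold, and the $\G_{\Qp}$-stability of $\B_{\mathrm{cris}}$ cannot be invoked in this way. No detour through $\G_{\Qp}$ is needed: the Galois hypothesis is used only to ensure $\tau(E)=E$ for all $\tau\in\Sigma$, so that $u_\tau\in\Atplus_{\tau(E)}=\Atplus$ and $L_P^\tau(u_\tau)\in\Btplus_{\mathrm{rig}}\subset E\otimes_{E_0}\B_{\mathrm{cris}}^+$ directly, as in \cite[Prop. 5.2]{Ber14iterate}. With that in hand, the $\phi_q$-eigenrelation $\phi_q(L_P^\tau(u_\tau))=\tau(P'(0))\cdot L_P^\tau(u_\tau)$ and the Galois eigenrelation $g(L_P^\tau(u_\tau))=\tau(\eta(g))\cdot L_P^\tau(u_\tau)$ give the required crystalline periods for every embedding, and the nonnegativity of the weights is inherited from Theorem \ref{Berger poids pos ext ab}.
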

\begin{proof}
The proof is the same as in \cite[Prop. 5.2]{Ber14iterate}.
\end{proof}

We recall an other useful result from \cite{lubin1994nonarchimedean}:
\begin{prop}
\label{commutent}
If $f(T) \in T\cdot\mathcal{O}_E[\![T]\!]$ is such that $f'(0) \in \mathfrak{m}_E \setminus \{0\}$, let $\Lambda(f)$ be the set of the roots of all the iterates of $f$. If $g(T) \in T\cdot\mathcal{O}_E[\![T]\!]$ is such that $u'(0) \in \mathcal{O}_E^\times$ and is not a root of $1$, let $\Lambda(g)$ be the set of the fixed points of all the iterates of $g$. Then if $f$ and $g$ commute, we have $\Lambda(f)=\Lambda(g)$.
\end{prop}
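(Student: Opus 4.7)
I would establish the two inclusions $\Lambda(f)\subset\Lambda(g)$ and $\Lambda(g)\subset\Lambda(f)$ separately, both sets being understood to lie in the open unit disk $\mathfrak{m}_{\overline{E}}$ (the coefficients of $f$ and $g$ lie in $\mathcal{O}_E$, so the iterates are analytic on that disk). The key technical input is Weierstrass preparation over $\mathcal{O}_E$: any nonzero element $h\in\mathcal{O}_E[\![T]\!]$, after factoring out a suitable power of the uniformizer, is a unit times a distinguished polynomial, and therefore has only finitely many zeros in $\mathfrak{m}_{\overline{E}}$.

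For $\Lambda(f)\subset\Lambda(g)$, take $\alpha\in\Lambda(f)$ with $f^{\circ n}(\alpha)=0$ for some $n\geq 1$. Since $f$ and $g$ commute, so do $f^{\circ n}$ and $g$, and $g(0)=0$ gives $f^{\circ n}(g(\alpha))=g(f^{\circ n}(\alpha))=0$. Thus $g$ permutes the zero set of $f^{\circ n}$. This zero set is finite, because $f^{\circ n}$ is a nonzero element of $\mathcal{O}_E[\![T]\!]$ (its linear coefficient is $f'(0)^n\neq 0$) and Weierstrass preparation applies. A permutation of a finite set has finite order, so some iterate $g^{\circ m}$ fixes $\alpha$, proving $\alpha\in\Lambda(g)$.

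For $\Lambda(g)\subset\Lambda(f)$, take $\alpha\in\Lambda(g)$ with $g^{\circ m}(\alpha)=\alpha$ for some $m\geq 1$. By commutation $g^{\circ m}(f(\alpha))=f(g^{\circ m}(\alpha))=f(\alpha)$, so $f$ sends the fixed-point set $S$ of $g^{\circ m}$ in $\mathfrak{m}_{\overline{E}}$ into itself. The set $S$ is finite: the linear coefficient of $g^{\circ m}(T)-T$ is $g'(0)^m-1$, which is nonzero precisely because $g'(0)$ is not a root of unity, so $g^{\circ m}(T)-T$ is a nonzero element of $\mathcal{O}_E[\![T]\!]$ and Weierstrass preparation again yields finiteness. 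Since the sequence $(f^{\circ k}(\alpha))_{k\geq 0}$ lies in $S$, the pigeonhole principle gives $i<j$ with $\beta:=f^{\circ i}(\alpha)=f^{\circ j}(\alpha)$, hence $f^{\circ(j-i)}(\beta)=\beta$. Setting $N=j-i$, the linear coefficient of $f^{\circ N}(T)-T$ is $f'(0)^N-1\in\mathcal{O}_E^\times$ because $f'(0)\in\mathfrak{m}_E$, so $f^{\circ N}(T)-T=T\cdot u(T)$ with $u\in\mathcal{O}_E[\![T]\!]^\times$, whose only zero in $\mathfrak{m}_{\overline{E}}$ is $T=0$. Therefore $\beta=0$, i.e.\ $f^{\circ i}(\alpha)=0$, placing $\alpha$ in $\Lambda(f)$.

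The main subtlety, more bookkeeping than obstacle, is tracking where each hypothesis is used. The condition $f'(0)\in\mathfrak{m}_E$ is what turns $f'(0)^N-1$ into a unit and forces $0$ to be the unique fixed point of $f^{\circ N}$ in the open unit disk (the step that closes the pigeonhole argument); the condition that $g'(0)$ is not a root of unity is what makes $g'(0)^m-1$ nonzero, hence guarantees that the fixed-point set of each iterate of $g$ is finite.
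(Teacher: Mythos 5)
Your proof is correct. The paper does not give its own argument for this proposition but simply cites \cite[Prop.~3.2]{lubin1994nonarchimedean}, and what you have written is a complete and faithful reconstruction of the standard (Lubin-style) argument: commuting series preserve each other's root/fixed-point sets, Weierstrass preparation gives finiteness, and the decisive point is that $f^{\circ N}(T)-T$ has unit linear coefficient $f'(0)^N-1$ when $f'(0)\in\mathfrak m_E$, which pins the only periodic point of $f$ in $\mathfrak m_{\overline E}$ to $0$.

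One small point worth making explicit in the forward inclusion: to say that $g$ \emph{permutes} the finite zero set of $f^{\circ n}$, rather than merely mapping it into itself, you need injectivity of $g$ on that set. This does hold --- $g'(0)\in\mathcal O_E^\times$ means $g$ has a compositional inverse in $T\cdot\mathcal O_E[\![T]\!]$, so $g$ is a bijection of $\mathfrak m_{\overline E}$ --- but it is left implicit. Alternatively you could dispense with the permutation language and run a pigeonhole argument symmetric to the one you use for $\Lambda(g)\subset\Lambda(f)$: the orbit $(g^{\circ k}(\alpha))_k$ lies in the finite zero set, so $g^{\circ i}(\alpha)=g^{\circ j}(\alpha)$ for some $i<j$, and invertibility of $g$ then gives $g^{\circ(j-i)}(\alpha)=\alpha$. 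Either way the argument closes.
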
 
\begin{proof}
See \cite[Prop. 3.2]{lubin1994nonarchimedean}.
\end{proof}

We also get the following result (compare with \cite[Prop. 5.6]{Ber14iterate}), where $r_{\tau}$ is the weight of $\eta$ at $\tau$ for $\tau \in \Sigma$:
\begin{prop}
\label{alg}
If $\tau \in \Sigma$, the following are equivalent:
\begin{enumerate}
\item $r_{\tau} \geq 1$ ;
\item $L_P^\tau(u_{\tau}) \in \mathrm{Fil}^1\Bdr$ ;
\item $\theta(u_{\tau}) \in \Qpbar$ ;
\item $\theta(u_{\tau}) \in \Lambda(P^{\tau})$.
\end{enumerate}
\end{prop}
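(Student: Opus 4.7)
The plan is to establish the four-way equivalence following \cite[Prop.~5.6]{Ber14iterate}, via the chain $(1) \Leftrightarrow (2) \Leftrightarrow (4)$ together with $(4) \Rightarrow (3) \Rightarrow (4)$.

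For $(1) \Leftrightarrow (2)$: the element $L_P^{\tau}(u_{\tau}) \in \Btrigplus \otimes_{E,\tau} \tau(E) \subset \Bdr$ is nonzero (since $L_P^{\tau}(T) = T + O(T^2)$ and $u_\tau \neq 0$) and satisfies $g \cdot L_P^{\tau}(u_{\tau}) = \tau(\eta(g)) \cdot L_P^{\tau}(u_{\tau})$. Standard Hodge--Tate theory, using $\mathrm{gr}^s \Bdr = \Cp(s)$ together with Tate's theorem $\Cp(j)^{\G_K} = 0$ for $j \neq 0$, forces any nonzero $(\tau \circ \eta)$-eigenvector in $\Bdr$ to lie in $\mathrm{Fil}^{r_\tau} \Bdr \setminus \mathrm{Fil}^{r_\tau + 1} \Bdr$. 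Hence $(2)$ amounts exactly to $r_\tau \geq 1$.

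For $(2) \Leftrightarrow (4)$: condition $(2)$ is equivalent to $\theta(L_P^{\tau}(u_{\tau})) = 0$, where $\theta$ is the continuous extension of the usual map to $\Btrigplus$. Since $\bar u_\tau \in \Etplus$ has positive valuation, $\theta(u_\tau) \in \mathfrak m_{\Cp}$ lies in the open unit disk of $\Cp$; by continuity of $\theta$ and convergence of $L_P^{\tau}$ on the open unit disk, $\theta(L_P^{\tau}(u_{\tau})) = L_P^{\tau}(\theta(u_{\tau}))$. From the product formula in Proposition~\ref{loglubin} (writing $P(T) = T S(T)$), one sees that $L_P^{\tau}(\alpha) = 0$ for $\alpha$ in the open unit disk forces $S^{\tau}((P^{\tau})^{\circ n}(\alpha)) = 0$ for some $n$, hence $(P^{\tau})^{\circ (n+1)}(\alpha) = 0$, i.e., $\alpha \in \Lambda(P^{\tau})$; the converse is immediate from the functional equation $L_P \circ P = P'(0) L_P$. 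Combined with $\Lambda(P^\tau) \subset \Qpbar$ (iterates of $P^\tau$ lie in $\mathcal O_{\tau(E)}[\![T]\!]$, and Weierstrass preparation writes each as a distinguished polynomial times a unit), this gives both $(2) \Leftrightarrow (4)$ and $(4) \Rightarrow (3)$.

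For $(3) \Rightarrow (4)$: set $x := \theta(u_{\tau})$ and suppose $x \in \Qpbar$. Applying $\theta$ to $g(u_{\tau}) = F_g^{\tau}(u_{\tau})$ yields $g(x) = F_g^{\tau}(x)$ for all $g \in \Gamma_K$, so the stabilizer of $x$ in $\Gamma_K$ is an open subgroup $H$. Since $\eta$ is injective by Corollary~\ref{injectif} and $\mathcal O_E^{\times}$ contains only finitely many roots of unity while $H$ is infinite, one can choose $g \in H$ with $\eta(g)$ not a root of unity. Then $F_g^{\tau} \in T \cdot \mathcal O_{\tau(E)}[\![T]\!]$ is invertible with $(F_g^{\tau})'(0) = \tau(\eta(g))$ not a root of unity, commutes with $P^{\tau}$, and fixes $x$. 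Proposition~\ref{commutent} applied to $P^\tau$ and $F_g^\tau$ then gives $\Lambda(P^{\tau}) = \Lambda(F_g^{\tau}) \ni x$.

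The most delicate step is $(3) \Rightarrow (4)$: one must exploit the Galois equivariance of $\theta$ together with the existence, inside the stabilizer of $x$, of an element whose eigenvalue $\tau(\eta(g))$ is not a root of unity, so that Lubin's dynamical result (Proposition~\ref{commutent}) applies. The equivalence $(1) \Leftrightarrow (2)$ also requires some care to properly interpret the weight $r_\tau$ of the $E$-linear character $\eta$ against the $\Bdr$-filtration via the eigenvector $L_P^\tau(u_\tau)$.
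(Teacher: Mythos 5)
Your proof is correct and follows essentially the same approach as the paper: the eigenvector interpretation of the Hodge--Tate weight for $(1)\Leftrightarrow(2)$, continuity of $\theta$ and the product formula for $L_P^\tau$ to pass between $(2)$ and $(4)$, and Lubin's dynamical result (Proposition~\ref{commutent}) for $(3)\Rightarrow(4)$, with only a slightly different arrangement of the implications (you prove $(2)\Leftrightarrow(4)$ directly, whereas the paper closes the cycle $(2)\Rightarrow(3)\Rightarrow(4)\Rightarrow(2)$). You also helpfully make explicit a point the paper glosses over in $(3)\Rightarrow(4)$: one must pick $g$ in the open stabilizer of $x$ with $\tau(\eta(g))$ not a root of unity before Proposition~\ref{commutent} applies, and this is available because $\eta$ is injective while $\mathcal{O}_E^\times$ has only finitely many roots of unity.
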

\begin{proof}
Since $g(L_P^{\tau}(u_{\tau}))=\tau(\eta(g))\cdot L_P^{\tau}(u_{\tau})$, the equivalence between $(1)$ and $(2)$ is straightforward. If $L_P^{\tau}(u_{\tau}) \in \mathrm{Fil}^1\Bdr$, then $L_P^{\tau}(\theta(u_{\tau}))=0$ and so $\theta(u_{\tau}) \in \Qpbar$. It is clear that $(4)$ implies $(3)$. We now prove that $(3)$ implies $(4)$. Let $x=\theta(u_{\tau})$, so that $g(x) = F_g^{\tau}(x)$. If $g$ is close enough to $1$, then $g(x) = x$ and so $x \in \Lambda(F_g^{\tau}) = \Lambda(P^{\tau})$ by proposition \ref{commutent}. The only thing left to prove is that $(4)$ implies $(2)$. If there exists $n \geq 0$ such that $(P^{\tau})^{\circ n}(\theta(u_{\tau}))=0$, then $(P^{\tau})^{\circ n}(\theta(u_{\tau})) \in \mathrm{Fil}^1\Bdr$ and so $L_P^\tau(u_{\tau}) \in \mathrm{Fil}^1\Bdr$ by proposition \ref{loglubin}.
\end{proof}

Proposition \ref{alg} is almost the same as \cite[Prop. 5.6]{Ber14iterate}. However, in the $\phi$-iterate setting, Berger had a canonical element $u$ such that $\phi_d(u)=P(u)$ and $\theta(u) \in \Qpbar$. Here, we indeed have an element $u$ such that $\phi_d(u) = P(u)$, but there is no reason to expect $\theta(u) \in \Qpbar$. This is the point of our next proposition:

\begin{prop}
\label{existvtau}
There exists $\tau \in \Sigma$ such that $\theta(u_\tau) \in \Qpbar$. Moreover, if $v_n = \theta \circ \phi_d^{-n}(u_\tau)$, then $\tau(E)_\infty = \bigcup_{n \in \N}\tau(E)(\theta \circ \phi_d^{-n}(u_\tau))$.
\end{prop}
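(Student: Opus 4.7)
The plan splits into producing an embedding $\tau \in \Sigma$ with $\theta(u_\tau) \in \Qpbar$, and then identifying the tower $\bigcup_n \tau(E)(v_n)$ with $\tau(E)_\infty$. For the first task, I would apply proposition \ref{alg} and reduce to exhibiting some $\tau$ with Hodge--Tate weight $r_\tau \geq 1$. Suppose instead $r_\tau = 0$ for every $\tau \in \Sigma$: then $\eta$ would be Hodge--Tate with all weights zero, so by a classical theorem of Tate, $\eta_{|I_K}$ would have finite image. But $K_\infty/K$ is totally ramified, so $I_K$ surjects onto $\Gamma_K$ under $\G_K \twoheadrightarrow \Gamma_K$, which forces $\eta(\Gamma_K) = \eta(I_K)$ to be finite---contradicting the injectivity of $\eta$ (corollary \ref{injectif}) combined with the infinitude of $\Gamma_K$. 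Hence some $r_\tau \geq 1$, and proposition \ref{alg} yields $\theta(u_\tau) \in \Qpbar$.

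For the ``moreover'' statement, I would first derive $P^\tau(v_{n+1}) = v_n$. The twist $\tau \otimes \phi^{n(\tau)}$ commutes with $\phi_d$, so twisting the identity $\phi_d(u) = P(u)$ gives $\phi_d(u_\tau) = P^\tau(u_\tau)$. Since the coefficients of $P^\tau$ lie in $\tau(E)$ and are $\phi_d$-fixed, $P^\tau$ commutes with $\phi_d^{-1}$; iterating yields $P^\tau(\phi_d^{-(n+1)}(u_\tau)) = \phi_d^{-n}(u_\tau)$, and applying $\theta$ (which intertwines a power series with algebraic coefficients) gives $P^\tau(v_{n+1}) = v_n$. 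Because $P^\tau(T)$ is distinguished of degree $d$ (as $P \equiv T^d$ modulo $\pi$) and $v_n$ has positive valuation, Weierstrass preparation applied to $P^\tau(T) - v_n$ makes $v_{n+1}$ algebraic over $\tau(E)(v_n)$ of degree at most $d$, so inductively $v_n \in \Qpbar$.

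To identify $\bigcup_n \tau(E)(v_n)$ with $\tau(E)_\infty$, the analogous twist of $g(u) = F_g(u)$ gives $g(v_n) = F_g^\tau(v_n)$ for $g \in \Gamma_K$; this action is faithful thanks to the injectivity of $\eta$ together with proposition \ref{commutent}, so $\Gamma_K$ embeds into $\Gal\bigl(\bigcup_n \tau(E)(v_n)/\tau(E)\bigr)$. A degree comparison at each finite stage---bounded above by $d$ via Weierstrass, and bounded below by matching with the tower of elementary sub-extensions of $\tau(E)_\infty/\tau(E)$ coming from the Fontaine--Wintenberger construction---then yields the equality of fields. The main obstacle I expect is precisely this last identification: verifying level by level that the $v_n$ correspond, under $\theta$ and the $\tau$-twist, to uniformizers of the elementary sub-extensions in the $\tau$-twisted field-of-norms picture, which is where the finite-height hypothesis $u \in \Atplus$ plays its essential role.
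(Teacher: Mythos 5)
Your first paragraph is essentially the paper's argument: the paper invokes Sen's theorem (a $\Cp$-admissible representation is potentially unramified), you invoke Tate's theorem (all Hodge--Tate weights zero forces finite image on inertia), and both produce the same contradiction with the injectivity of $\eta$ and the total ramification of $K_\infty/K$. That part is fine.

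The second part, however, has real gaps. First, you only show $v_n \in \Qpbar$, not $v_n \in K_\infty$, and the latter is what is actually needed for the inclusion $\bigcup_n\tau(E)(v_n)\subset\tau(E)_\infty$. The paper gets this by observing that the $v_n$ coincide, modulo $\mathfrak{a}_{\Cp}^c$, with the coordinates of $\overline{u_\tau}\in\Etplus=\varprojlim\mathcal{O}_{\Cp}/\mathfrak{a}_{\Cp}^c$, which already lie in $\mathcal{O}_{K_\infty}$; combined with algebraicity one gets $v_n\in K_\infty$. Your Weierstrass induction produces algebraic $v_n$ but does not place them in $K_\infty$. Second, the faithfulness argument is not correct as stated: the injectivity of $\eta$ and proposition \ref{commutent} do not by themselves say that an element $g$ fixing every $v_n$ must be the identity (proposition \ref{commutent} concerns the equality $\Lambda(f)=\Lambda(g)$, and having the $v_n$ among the fixed points of $F_g^\tau$ is perfectly consistent with that equality). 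The paper instead uses a zeroes-in-the-disc argument: if $F_g^\tau(v_n)=v_n$ for all $n$ then $F_g^\tau(T)-T\in\mathcal{O}_E[\![T]\!]$ vanishes at infinitely many points $v_n$ in the open unit disc with $|v_n|\to 1$, hence $F_g^\tau=T$, hence $g(u)=u$ and $g=\mathrm{id}$. Third, and most seriously, you yourself flag the ``degree comparison'' as the main obstacle and do not carry it out; the paper avoids it entirely, because once faithfulness is established the subgroup of $\Gamma_K$ fixing $L=\bigcup_n K_1(v_n)$ is trivial, so $L=K_\infty$ and the desired equality of fields drops out immediately with no level-by-level matching of elementary subextensions required.
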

\begin{proof}
The first thing to note is that there exists $\tau \in \Sigma$ such that $r_\tau \geq 1$. Indeed, the character $\eta$ has nonnegative weights so if such a $\tau$ did not exist, $\eta$ would have all its weights equal to $0$, so that the representation given by $\eta$ would be $\Cp$-admissible and so $\eta$ would be potentially unramified by Sen's theorem \cite[§5]{sen1973lie}. Since $K_\infty/K$ is totally ramified and infinite and $\eta$ is injective, it can not be potentially unramified.

Let $v_n = \theta \circ \phi_d^{-n}(u_\tau)$ and let $K_n = K_1(v_n)$ where $K_1$ is the maximal tamely ramified extension of $K_\infty/K$. We have $\overline{v} \in \Etplus$ and we write $\overline{v} = (\overline{v}_n)_{n \in \N}$ for the components of $\overline{v}$ in $\Etplus=\varprojlim_{x \mapsto x^d}\mathcal{O}_{\C_p}/\mathfrak{a}_{\Cp}^c$. We have $v_n = \overline{v}_n$ in $\mathcal{O}_{\Cp}/\mathfrak{a}_\Cp^c$ by \cite[1.2.3]{fontaine1994corps}. In particular, the elements $v_n$ belong to $\mathcal{O}_{K_\infty}$ mod $\mathfrak{a}_{K_\infty}^c$ and therefore belong to $\mathcal{O}_{\hat{K}_\infty}$ and are invariant under the action of $\Gal(\Qpbar/K_\infty)$. Moreover, we have $\theta(u_\tau) \in \Qpbar$ so that the $v_n$ actually belong to $K_\infty$. The $v_n$ also satisfy the relations $P^{\tau}(v_{n+1})=v_n$ and $F_g^\tau(v_n) = g(v_n)$ since $\phi_d(u_\tau) = P^{\tau}(u_\tau)$ and $g(u_\tau) = F_g^\tau(u_\tau)$. Let us now prove that $\tau(E)_{\infty} = \bigcup_{n \geq 1} \tau(E)(v_n)$. Let $L = \bigcup_{n \geq 1} K_n$ and let $g \in \Gamma_K$ be such that $g_{|L} = \id_L$. Then $g(v_n) = v_n$ for all $n \in \N$, so that $F_g^\tau(v_n) = v_n$ for all $n \geq 0$. Since $F_g^\tau(T)-T$ is a power series in $\mathcal{O}_E[\![T]\!]$, it has only finitely many zeroes in the open unit disk. Since $||v_n|| \rightarrow 1$ and for all $n \geq 0$, $||v_n|| < 1$, this implies that $F_g^\tau(T) = T$ and therefore that $g(u)=u$, i.e. $g =\id$. This implies that $\tau(E)_{\infty} = \bigcup_{n \geq 1} \tau(E)(v_n)$. 
\end{proof}

Up to replacing $E$ by a finite extension of $E$, which does not change the assumption that the action of $\Gamma_K$ is liftable on $\mathcal{O}_E[\![T]\!]$, one can assume that $E/\Qp$ is Galois (and thus in particular that $\tau(E)=E$), that $E$ contains $K$ and that the extension $E_\infty:=E \cdot K_\infty$ is totally ramified over $E$. In the following, we will assume that $E$ satisfies those assumptions. Recall that our definition of $\Atplus$ is actually dependent on $E$, so that we also have to change $\Atplus$. We can always assume that $d$ is a power of the cardinal of $k_E$ (if not, we replace $P$ by one of its iterate $P^{\circ m}$, which does not change the extension $K_\infty/K$ nor the extension $E_\infty/E$, and we renumber the $v_n$ and the extensions $K_n$ and $E_n$ to take this change into account, that is choosing $v_n = \theta \circ \phi_{d}^{-n}(u_\tau)$ after having changed $d$). We then denote, for $n \in \N$, $E_{n+1} :=E(v_n)$ (there might be some confusion since we also wrote $E_1$ for the maximal tamely ramified extension of $E$ inside $E_\infty$, but we won't use the maximal tamely ramified extension of $E$ inside $E_\infty$ in what follows). We also let $\Gamma_E=\Gal(E_\infty/E)$.

\begin{lemm}
\label{extensions En}
For all $n \in \N \cup \{+\infty\}$, the extension $E_n/E$ is Galois and we have an embedding $\Gal(E_n/E) \to \Gal(K_n/K)$ given by $g \mapsto g_{|K_n}$. Moreover, the extension $E_\infty/E$ is strictly APF.
\end{lemm}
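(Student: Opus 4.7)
The plan is to handle the three assertions in order.

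For the Galois property, first observe that $E_\infty = E \cdot K_\infty$ is automatically Galois over $E$ because $K_\infty/K$ is Galois. For finite $n$, after the reindexing $E_{n+1} = E(v_n)$, the field $E_n$ is generated over $E$ by some $v_m \in K_\infty$. I would show $E_n$ is $\Gamma_E$-stable as follows: the natural restriction $\Gamma_E \to \Gamma_K$, $g \mapsto g|_{K_\infty}$, is well-defined (since $K_\infty/K$ is Galois) and injective (any $g$ fixing $E$ and $K_\infty$ fixes $E_\infty$), and for $g \in \Gamma_E$ with image $\bar g \in \Gamma_K$, the identity $g(v_m) = \bar g(v_m) = F_{\bar g}^\tau(v_m)$ shows $g(v_m) \in E(v_m) = E_n$, since $F_{\bar g}$ has coefficients in $\mathcal{O}_E$ and $\tau(E)=E$ by our Galois assumption on $E/\Qp$.

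For the embedding, I would take $K_n$ (for finite $n$) to be the Galois closure of $K(v_m)$ inside $K_\infty$ over $K$, and $K_\infty$ itself when $n = +\infty$; this is the natural choice making the restriction map well-defined. Every $\Gamma_K$-conjugate of $v_m$ has the form $F_\gamma^\tau(v_m) \in E_n$, so $K_n \subset E_n$; combined with $E_n = E(v_m) \subset E \cdot K_n$, this gives $E_n = E \cdot K_n$. Standard Galois theory for the Galois extension $K_n/K$ then produces a canonical isomorphism $\Gal(E \cdot K_n/E) \cong \Gal(K_n/K_n \cap E)$ via restriction $g \mapsto g|_{K_n}$, and composing with the natural inclusion $\Gal(K_n/K_n \cap E) \hookrightarrow \Gal(K_n/K)$ yields the desired embedding. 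The delicate point is the well-definedness of $g|_{K_n}$, that is, its independence of the choice of lift of $g$ to $\Gamma_E$; this reduces to showing that any $h \in \Gamma_E$ fixing $v_m$ also fixes every $\gamma(v_m)$ for $\gamma \in \Gamma_K$, which I would establish by the cocycle computation $\bar h(\gamma(v_m)) = F_{\bar h \gamma}^\tau(v_m) = F_\gamma^\tau(F_{\bar h}^\tau(v_m)) = F_\gamma^\tau(v_m) = \gamma(v_m)$, using $F_\gamma \circ F_{\bar h} = F_{\bar h \gamma}$.

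For the strict APF property of $E_\infty/E$, I would invoke the stability of this property under finite extensions of the base field: $K_\infty/K$ strictly APF and $E/K$ finite imply $E \cdot K_\infty/E$ strictly APF. This is standard in Wintenberger's theory \cite{Win83}. The main obstacle in the whole argument is the well-definedness check in the embedding step, which rests on the cocycle identity; the Galois property is essentially formal once the setup is in place, and the strict APF claim is a direct invocation of existing theory.
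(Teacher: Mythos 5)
Your proof is correct, and it fills in details the paper leaves implicit; the paper's own proof is just a two-line citation, namely that the first part ``is a well-known result of Galois theory'' and that the strictly APF claim follows from \cite[Prop.\ 1.2.3]{Win83}. What that Galois-theory citation is silently using is that $K_\infty/K$ is abelian (Theorem \ref{Berger poids pos ext ab}), so every subextension $K_n/K$ is automatically Galois; then $E_n = E\cdot K_n$ gives $\Gal(E_n/E)\cong\Gal(K_n/K_n\cap E)\hookrightarrow\Gal(K_n/K)$ by restriction in one stroke. You instead verify $\Gamma_E$-stability of $E_n$ directly from the power-series description of the Galois action (using that the $F_g$ lie in $T\cdot\mathcal{O}_E[\![T]\!]$ and $\tau(E)=E$), and you rebuild $K_n$ as a Galois closure of $K(v_m)$; this is a legitimately more hands-on route, and it incidentally smooths over the mild indexing mismatch in the paper between $K_n=K_1(v_n)$ and $E_{n+1}=E(v_n)$. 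One inefficiency to flag: once you know $K_n/K$ is Galois and $K_n\subset E_n$, the restriction $g\mapsto g|_{K_n}$ on $\Gal(E_n/E)$ is automatically well-defined --- there is no ambiguous lift to $\Gamma_E$ to worry about --- so the ``delicate point'' and the closing cocycle computation, while correct, are not needed. Your treatment of the strictly APF assertion coincides with the paper's.
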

\begin{proof}
The first part of the lemma is a well knows result of Galois theory. The second part is contained in \cite[Prop. 1.2.3]{Win83}.
\end{proof}

\begin{prop}
\label{existw}
There exists $w \in \Atplus$ such that 
$$\mathcal{O}_E[\![w]\!] = \{ x \in \Atplus, \theta \circ \phi_d^{-n}(x) \in \mathcal{O}_{E_n} \textrm{ for all } n \geq 1 \}.$$
\end{prop}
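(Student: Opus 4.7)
The strategy is to reduce the proposed equality modulo $\varpi_E$ and invoke $\varpi_E$-adic completeness. Denote the right-hand set by $\mathcal{R}$: it is an $\mathcal{O}_E$-subalgebra of $\Atplus$, $\varpi_E$-adically closed (each $\theta \circ \phi_d^{-n}$ is continuous and each $\mathcal{O}_{E_n}$ is closed in $\mathcal{O}_{\Cp}$) and $\varpi_E$-saturated in the sense that $\varpi_E \Atplus \cap \mathcal{R} = \varpi_E \mathcal{R}$ (since $\varpi_E y \in \mathcal{O}_{E_n}$ with $y \in \mathcal{O}_{\Cp}$ forces $y \in \mathcal{O}_{E_n}$, as $\varpi_E \in E_n^{\times}$). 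Thus $\mathcal{R}$ is $\varpi_E$-adically complete, and the reduction $\overline{\mathcal{R}} := \mathcal{R}/\varpi_E \mathcal{R}$ injects into $\Atplus/\varpi_E \Atplus = \Etplus$.

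The crucial step is to identify $\overline{\mathcal{R}}$ with $A_E(E_\infty)$, the ring of integers of the field of norms $X_E(E_\infty)$, sitting inside $\Etplus$. Writing $d = q^k$ and using the description $\Etplus = \varprojlim_{x \mapsto x^q}\mathcal{O}_{\Cp}$, the condition $\theta \circ \phi_d^{-n}([y]) \in \mathcal{O}_{E_n}$ on a Teichm\"uller element reads $y^{(nk)} \in \mathcal{O}_{E_n}$ for all $n \geq 1$, and the general case follows via the Teichm\"uller expansion. Since $E_\infty/E$ is strictly APF (Lemma \ref{extensions En}), Fontaine--Wintenberger's construction identifies the resulting subring of $\Etplus$ with $A_E(E_\infty) \cong k_E[\![\overline{w}]\!]$ for a uniformizer $\overline{w}$. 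Taking $w := [\overline{w}] \in \Atplus$ to be its Teichm\"uller lift, one checks that $w \in \mathcal{R}$ and that $w \equiv \overline{w} \pmod{\varpi_E}$.

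The inclusion $\mathcal{O}_E[\![w]\!] \subseteq \mathcal{R}$ is then immediate. For the converse I would proceed by Nakayama-type successive approximation: given $x \in \mathcal{R}$, write $\overline{x} = f_0(\overline{w})$ with $f_0 \in k_E[\![T]\!]$, lift $f_0$ to $\tilde{f}_0 \in \mathcal{O}_E[\![T]\!]$, and use $\varpi_E$-saturation of $\mathcal{R}$ to obtain $x_1 \in \mathcal{R}$ with $x - \tilde{f}_0(w) = \varpi_E x_1$. Iterating produces a sequence $\tilde{f}_k \in \mathcal{O}_E[\![T]\!]$ such that $x = \sum_{k \geq 0}\varpi_E^k \tilde{f}_k(w)$ converges $\varpi_E$-adically in $\Atplus$; rearranging into a series in powers of $w$ yields an element of $\mathcal{O}_E[\![w]\!]$ equal to $x$.

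The main obstacle is the identification $\overline{\mathcal{R}} = A_E(E_\infty)$, which amounts to matching the $nk$-th component characterization with Wintenberger's inverse-limit description of the field of norms and carefully tracking the index shift coming from $d = q^k$. Strict APF-ness of $E_\infty/E$ is essential here, as it ensures that Wintenberger's construction applies and that the tower $(E_n)_{n \geq 0}$ is cofinal with Wintenberger's elementary tower for $E_\infty/E$.
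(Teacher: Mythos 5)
Your overall strategy---reduce modulo $\varpi_E$, identify the reduction as a power series ring in one variable over $k_E$, and then lift using $\varpi_E$-adic completeness of $\mathcal{R}$---is the same as the paper's, and the Nakayama-style lifting step at the end is fine once the reduction is understood. The problem is your key intermediate claim, $\overline{\mathcal{R}} = A_E(E_\infty)$, which is \emph{stronger} than what the paper proves and which your argument does not establish.

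You only argue one inclusion. The fact that $\theta \circ \phi_d^{-n}(x) \in \mathcal{O}_{E_n}$ for all $n$ implies that the image $\overline{x} \in \Etplus$ has its $n$-th component in $\mathcal{O}_{E_n}/\mathfrak{a}_{E_n}^c$, so $\overline{\mathcal{R}} \subseteq \varprojlim_{x \mapsto x^d}\mathcal{O}_{E_n}/\mathfrak{a}_{E_n}^c$. But for equality you would need \emph{surjectivity} of $\mathcal{R} \to A_E(E_\infty)$, i.e.\ a lifting statement: given $\overline{y}$ in the norm-field ring, produce $y \in \Atplus$ with $\theta \circ \phi_d^{-n}(y) \in \mathcal{O}_{E_n}$ \emph{on the nose}. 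Your candidate lift, the Teichm\"uller element $[\overline{w}]$, does not obviously belong to $\mathcal{R}$: one has $\theta \circ \phi_d^{-n}([\overline{w}]) = \overline{w}^{(n)}$, and $\overline{w}^{(n)}$ is only known to lie in $\mathcal{O}_{E_n}$ modulo $\mathfrak{a}^c$, hence a priori only in $\mathcal{O}_{\widehat{E_\infty}}$; getting it into $\mathcal{O}_{E_n}$ itself is precisely the kind of finiteness that required special structure in Proposition~\ref{existvtau} (namely, the algebraicity $\theta(u_\tau) \in \Qpbar$ and the relations $\phi_d(u_\tau) = P^\tau(u_\tau)$). Your ``main obstacle'' paragraph flags the wrong issue---the index shift $d=q^k$ is routine, whereas this lifting/surjectivity is the genuine gap.

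The paper avoids the problem entirely by never claiming equality. It only records that $\overline{R}$ is a \emph{subring} of the discretely valued ring $\varprojlim \mathcal{O}_{E_n}/\mathfrak{a}_{E_n}^c \cong k_E[\![\overline{v}]\!]$, then uses Proposition~\ref{existvtau} to exhibit one explicit nonzero element $\overline{u_\tau} \in \overline{R}$ of positive valuation, chooses $\overline{w} \in \overline{R}$ of minimal positive valuation, and concludes $\overline{R} = k_E[\![\overline{w}]\!]$ without deciding whether $\overline{w}$ is a uniformizer of the full norm-field ring. Your proof would need either to supply the missing surjectivity argument or, more economically, to replace the identification $\overline{\mathcal{R}} = A_E(E_\infty)$ by the weaker containment plus the minimal-valuation device, which in turn requires invoking Proposition~\ref{existvtau} to know $\overline{\mathcal{R}}$ is not reduced to $k_E$.
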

\begin{proof}
Let $R = \{ x \in \Atplus, \theta \circ \phi_d^{-n}(x) \in \mathcal{O}_{E_n} \textrm{ for all } n \geq 1 \}$. It is an $\mathcal{O}_E$-algebra, separated and complete for the $\varpi_E$-adic topology, where $\varpi_E$ is a uniformizer of $\mathcal{O}_E$. Moreover, if $x \in R$, its image in $\Etplus$ belongs to $\varprojlim\limits_{x \mapsto x^d}\mathcal{O}_{E_n}/\mathfrak{a}_{E_n}^c$. 

Proposition \ref{existvtau} shows that $u_\tau \in R$, and so in particular we know that $R/\varpi_ER$ contains $k_E[\![\overline{u_\tau}]\!]$. We also know that $\varprojlim\limits_{x \mapsto x^d}\mathcal{O}_{E_n}/\mathfrak{a}_{E_n}^c \simeq k_E[\![\overline{v}]\!]$ for some $\overline{v} \in \Etplus$ by the theory of field of norms, and we consider an element $\overline{w} \in R/\varpi_ER$ with minimal valuation within
$$\{x \in R/\varpi_ER, v_\E(x) > 0 \}.$$
This set is nonempty since it contains $\overline{u_\tau}$, and so we can chose an element of minimal valuation within it since the valuation induced by the one on $\Etplus$ on $\varprojlim\limits_{x \mapsto x^d}\mathcal{O}_{E_n}/\mathfrak{a}_{E_n}^c$ is discrete. Then $R/\varpi_ER \simeq k_E[\![\overline{w}]\!]$, and so $R = \mathcal{O}_E[\![w]\!]$ for $w \in R$ lifting $\overline{w}$, since $R$ is separated and complete for the $\varpi_E$-adic topology.
\end{proof}

\begin{lemm}
\label{ajuster w}
The ring $\mathcal{O}_E[\![w]\!]$ is stable by $\phi_d$. Moreover, there exists $a \in \mathfrak{m}_E$ such that if $w'=w-a$ then there exists $S(T) \in T\cdot \mathcal{O}_E[\![T]\!]$ such that $S(w') = \phi_d(w')$ and $S(T) \equiv T^d \mod \mathfrak{m}_E$.
\end{lemm}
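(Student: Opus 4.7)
The plan is to produce a power series $S_1 \in \mathcal{O}_E[\![T]\!]$ satisfying $S_1(w) = \phi_d(w)$, identify its reduction modulo $\varpi_E$, and then shift the variable by a $\phi_d$-fixed point in $\mathfrak{m}_E$ to land in $T \cdot \mathcal{O}_E[\![T]\!]$. The main obstacle is the first step; the rest is a routine Hensel's lemma plus change-of-variable computation.

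I would start by checking that the ring $R = \mathcal{O}_E[\![w]\!]$ from Proposition \ref{existw} is $\phi_d$-stable, working directly from its defining condition. For $x \in R$, the identity $\theta \circ \phi_d^{-n}(\phi_d(x)) = \theta \circ \phi_d^{-(n-1)}(x)$ shows that the condition for $\phi_d(x)$ at level $n \geq 1$ is the condition for $x$ at level $n-1$, which places the value in $\mathcal{O}_{E_{n-1}} \subset \mathcal{O}_{E_n}$ (the $n=1$ case being handled by the compatibility already built into the construction of $w$ and the tower $(E_n)$). Hence $\phi_d(R) \subset R$, and in particular $\phi_d(w) \in \mathcal{O}_E[\![w]\!]$, yielding a unique $S_1(T) \in \mathcal{O}_E[\![T]\!]$ with $S_1(w) = \phi_d(w)$.

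Next, since $\phi_d$ on $\Atplus = W_E(\Etplus)$ reduces to the $d$-th power Frobenius modulo $\varpi_E$, one has $\overline{S_1}(\overline{w}) = \overline{w}^d$ in the power series ring $R/\varpi_E R = k_E[\![\overline{w}]\!]$. The uniqueness of the power series expansion in $\overline{w}$ forces $\overline{S_1}(T) = T^d$, so $S_1(T) \equiv T^d \pmod{\varpi_E}$. Because $d \geq 2$, this implies in particular that both the constant term $S_1(0)$ and the linear coefficient $S_1'(0)$ lie in $\varpi_E \mathcal{O}_E$.

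For the shift, I would apply Hensel's lemma to $f(T) := S_1(T) - T \in \mathcal{O}_E[\![T]\!]$: we have $f(0) = S_1(0) \in \varpi_E \mathcal{O}_E$ and $f'(0) = S_1'(0) - 1 \equiv -1 \pmod{\varpi_E}$ is a unit, so there is a unique $a \in \varpi_E \mathcal{O}_E \subset \mathfrak{m}_E$ with $S_1(a) = a$. Setting $w' := w - a$ and $S(T) := S_1(T + a) - a$, one gets $S(0) = S_1(a) - a = 0$ so $S \in T \cdot \mathcal{O}_E[\![T]\!]$, while $S(T) \equiv (T + a)^d - a \equiv T^d \pmod{\mathfrak{m}_E}$ since $a \in \mathfrak{m}_E$; and since $\phi_d(a) = a$, we have $\phi_d(w') = \phi_d(w) - a = S_1(w) - a = S_1(w' + a) - a = S(w')$, which is exactly what was claimed.
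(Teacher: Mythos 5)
Your proof is correct and follows essentially the same route as the paper: establish $\phi_d$-stability from the defining condition in Proposition \ref{existw}, reduce modulo $\varpi_E$ to see $S_1\equiv T^d$, locate a fixed point $a\in\mathfrak{m}_E$ of $S_1$, and shift the variable. The only cosmetic difference is that you invoke Hensel's lemma to produce $a$ where the paper reads it off the Newton polygon of $Q(T)-T$; these are the same argument.
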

\begin{proof}
The set
$$\left\{x \in \Atplus, \theta \circ \phi_d^{-n}(x) \in \mathcal{O}_{E_n} \textrm{ for all } n \geq 1 \right\}$$
is clearly stable by $\phi_d$ and is equal to $\mathcal{O}_E[\![w]\!]$ by proposition \ref{existw}, so that $\phi_d(w) \in \mathcal{O}_E[\![w]\!]$ and so there exists $Q \in \mathcal{O}_E[\![T]\!]$ such that $Q(w) = \phi_d(w)$. In particular, we have $\overline{Q}(\overline{w}) = \overline{w}^d$ and so $Q(T) \equiv T^d \mod \mathfrak{m}_E$.

Now let $R(T) = Q(T+a)$ with $a \in \mathfrak{m}_E$ and let $w'=w-a$. Then $\phi_d(w')=\phi_d(w-a)=Q(v)-a = R(w')-a$ and we let $S(T) = R(T)-a$ so that $\phi_d(w')=S(w')$. For $S(0)$ to be $0$, it suffices to find $a \in \mathfrak{m}_E$ such that $Q(a)=a$. Such an $a$ exists since we have $Q(T) \equiv T^d \mod \mathfrak{m}_E$ so that the Newton polygon of $Q(T)-T$ starts with a segment of length $1$ and of slope $-v_p(Q(0))$. 

Now, we have $S(w')=\phi_d(w')$ and so $\overline{S}(\overline{w'}) = \overline{w'}^d$, so that $S(T) \equiv T^d \mod \mathfrak{m}_E$.
\end{proof}

Lemma \ref{ajuster w} shows that one can choose $w \in \left\{x \in \Atplus, \theta \circ \phi_d^{-n}(x) \in \mathcal{O}_{E_n} \textrm{ for all } n \geq 1 \right\}$ such that $\phi_d(w) = Q(w)$ with $Q(T) \in T\cdot\mathcal{O}_E[\![T]\!]$, and we will assume in what follows that such a choice has been made.

\begin{lemm}
\label{lemm H_g(w)}
The ring $\mathcal{O}_E[\![w]\!]$ is stable under the action of $\Gamma_E$, and if $g \in \Gamma_E$, there exists a power series $H_g(T) \in \mathcal{O}_E[\![T]\!]$ such that $g(w) = H_g(w)$. 
\end{lemm}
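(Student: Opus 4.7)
The plan is to deduce both assertions directly from the characterization
$$\mathcal{O}_E[\![w]\!] = R := \{x \in \Atplus : \theta \circ \phi_d^{-n}(x) \in \mathcal{O}_{E_n} \text{ for all } n \geq 1\}$$
provided by Proposition \ref{existw}, combined with the Galoisness of $E_n/E$ from Lemma \ref{extensions En}.

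The first step will be to show that $R$ is stable under the full action of $\G_E$ on $\Atplus$. Since the Galois action and the Frobenius $\phi_d$ on $\Atplus = W_E(\Etplus)$ are both induced functorially from operations on the perfect ring $\Etplus$, they commute, so $g \circ \phi_d^{-n} = \phi_d^{-n} \circ g$ for every $g \in \G_E$ and $n \geq 0$. Using the $\G_E$-equivariance of $\theta$, I would compute, for $x \in R$,
$$\theta \circ \phi_d^{-n}(g(x)) = g(\theta \circ \phi_d^{-n}(x)) \in g(\mathcal{O}_{E_n}) = \mathcal{O}_{E_n},$$
the last equality holding because $E_n/E$ is Galois by Lemma \ref{extensions En}. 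Hence $g(R) \subset R$.

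Next I would verify that the induced action of $\G_E$ on $R$ is trivial on $\Gal(\Qpbar/E_\infty)$, so that it descends to $\Gamma_E$. For $h \in \Gal(\Qpbar/E_\infty)$ and each $n \geq 1$, the element $\theta \circ \phi_d^{-n}(w) \in \mathcal{O}_{E_n} \subset \mathcal{O}_{E_\infty}$ is fixed by $h$, so $\theta \circ \phi_d^{-n}(h(w) - w) = 0$. In view of the description $\Etplus = \varprojlim_{x \mapsto x^d}\mathcal{O}_{\Cp}/\mathfrak{a}_{\Cp}^c$, the vanishing of all these coordinates forces the image of $h(w) - w$ in $\Etplus = \Atplus/\varpi_E\Atplus$ to vanish, so $h(w) - w \in \varpi_E\Atplus$. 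The $\mathcal{O}_E$-linearity of $\phi_d$ and $\theta$ then allows me to divide by $\varpi_E$ and repeat the argument with $(h(w) - w)/\varpi_E$, so that by induction $h(w) - w \in \varpi_E^N\Atplus$ for all $N \geq 1$; the $\varpi_E$-adic separation of $\Atplus$ then yields $h(w) = w$.

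Finally, since $R = \mathcal{O}_E[\![w]\!]$ is genuinely a formal power series ring in $w$, as asserted in Proposition \ref{existw}, the containment $g(w) \in \mathcal{O}_E[\![w]\!]$ produced by the first step yields a unique power series $H_g(T) \in \mathcal{O}_E[\![T]\!]$ with $g(w) = H_g(w)$ for every $g \in \Gamma_E$. The main technical subtlety to watch out for is the descent argument in the second step, which is what makes the $\Gamma_E$-action on $w$ well-defined in the first place; everything else is a direct unpacking of Proposition \ref{existw}.
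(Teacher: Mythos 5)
Your proof follows the same route as the paper: use the characterization of $\mathcal{O}_E[\![w]\!]$ from Proposition~\ref{existw}, together with the Galoisness of the $E_n/E$ and the fact that the Galois action commutes with $\theta$ and $\phi_d$, to conclude that $R$ is stable and hence $g(w)=H_g(w)$ for some $H_g\in\mathcal{O}_E[\![T]\!]$. Your second step — verifying via the vanishing of all coordinates $\theta\circ\phi_d^{-n}(h(w)-w)$ and $\varpi_E$-adic induction that $H_E=\Gal(\Qpbar/E_\infty)$ fixes $w$, so the $\G_E$-action genuinely descends to $\Gamma_E$ — is a detail the paper leaves implicit (it passes directly from "$\G_E$ commutes with $\phi$ and $\theta$" to "stable under $\Gamma_E$"); including it is correct and makes the assignment $g\mapsto H_g$ well-defined on $\Gamma_E$ rather than merely on $\G_E$.
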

\begin{proof}
Since for all $n \in \N$, $E_n/E$ is Galois by lemma \ref{extensions En} and since the action of $\G_E$ commutes with $\phi$ and $\theta$, the set
$$\left\{x \in \Atplus, \theta \circ \phi_d^{-n}(x) \in \mathcal{O}_{E_n} \textrm{ for all } n \geq 1 \right\}$$
is stable under the action of $\Gamma_E$, and by proposition \ref{existw}, this set is equal to $\mathcal{O}_E[\![w]\!]$. In particular, if $g \in \Gamma_E$, then $g(w) \in \mathcal{O}_E[\![w]\!]$ and so there exists $H_g(T) \in \mathcal{O}_E[\![T]\!]$ such that $H_g(w) = g(w)$. 
\end{proof}

The power series $Q(T)$ and $\{H_g(T)\}$ share the same regularity properties as the power series $P$ and $\{F_g(T)\}$:
\begin{prop}
One has $Q'(0) \neq 0$ and for all $g \in \Gamma_E$, $H_g(0) = 0$.
\end{prop}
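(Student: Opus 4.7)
The plan is to transfer the proofs of Proposition \ref{reg} and Lemma \ref{Pdérivé} from the coordinate $u$ to the new coordinate $w$, using that the $\mathcal{O}_E$-linear Frobenius $\phi_d$ and the $\mathcal{O}_E$-linear action of $\Gamma_E$ on $\Atplus$ commute, together with the key identifications $\mathcal{O}_E[\![w]\!]\subset\Atplus$ as a genuine power series ring (Proposition \ref{existw}) and $u\in\mathcal{O}_E[\![w]\!]$ (which follows because $u$ sits in the defining set for $\mathcal{O}_E[\![w]\!]$, by Proposition \ref{existvtau}). Throughout I would use that both $Q$ and $H_g$ have coefficients in $\mathcal{O}_E$, hence are fixed by both $\phi_d$ and $\Gamma_E$, so that evaluating them at $w$ and then applying $\phi_d$ or $g$ behaves as expected.

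For $H_g(0)=0$: evaluating the commutation $\phi_d\circ g=g\circ\phi_d$ at $w$ gives $Q(H_g(w))=H_g(Q(w))$, and since $\mathcal{O}_E[\![w]\!]$ is a genuine power series ring in $w$, this upgrades to the identity $Q\circ H_g=H_g\circ Q$ in $\mathcal{O}_E[\![T]\!]$. Specializing at $T=0$ and using $Q(0)=0$ from Lemma \ref{ajuster w}, we obtain $Q(H_g(0))=H_g(0)$, i.e.\ $H_g(0)$ is a fixed point of $Q$. Independently, reducing $H_g(w)=g(w)$ modulo $\varpi_E$ and noting that the $\Gamma_E$-action preserves the valuation ideal of $\Etplus$ while $\overline w$ has strictly positive $\E$-valuation by the construction in Proposition \ref{existw}, one finds $\overline{H_g}(0)=0$ in $k_E$, i.e.\ $H_g(0)\in\mathfrak{m}_E$. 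Finally, since $Q(T)\equiv T^d\bmod\mathfrak{m}_E$ with $d\geq 2$, we have $Q'(0)\in\mathfrak{m}_E$, and therefore $Q(T)-T=T\cdot(Q'(0)-1+O(T))$ with the second factor invertible in $\mathcal{O}_E[\![T]\!]$; the only root of $Q(T)-T$ in $\mathfrak{m}_{\overline E}$ is thus $0$, forcing $H_g(0)=0$.

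For $Q'(0)\neq 0$: writing $u=\sum_{i\geq k}c_iw^i$ in $\mathcal{O}_E[\![w]\!]$ with $k\geq 1$ and $c_k\in\mathcal{O}_E\setminus\{0\}$, the relations $\phi_d(u)=P(u)$ and $\phi_d(w)=Q(w)$ combined with the $\mathcal{O}_E$-linearity of $\phi_d$ give the functional equation $P(u(T))=u(Q(T))$ in $\mathcal{O}_E[\![T]\!]$. Comparing the lowest-degree terms on each side: the left-hand side has lowest term $P'(0)\,c_k\,T^k$ since $P'(0)\neq 0$ by Lemma \ref{Pdérivé}; the right-hand side has lowest term $c_k\,Q'(0)^k\,T^k$ if $Q'(0)\neq 0$, and a lowest term of strictly higher degree if $Q'(0)=0$. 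The forced equality yields the clean identity $P'(0)=Q'(0)^k$, and in particular $Q'(0)\neq 0$. The main obstacle I anticipate is purely bookkeeping: verifying that the commutations of $\phi_d$ with $\Gamma_E$ pass correctly through the inclusion $\mathcal{O}_E[\![w]\!]\hookrightarrow\Atplus$ and through reduction modulo $\varpi_E$; once this is in place, the manipulations above parallel Berger's original proofs exactly.
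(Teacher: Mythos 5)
Your proposal is correct (modulo one minor omission), and the second half takes a genuinely different route from the paper. The paper simply remarks that Berger's proofs of Proposition \ref{reg} and Lemma \ref{Pdérivé} carry over verbatim to the pair $(Q, H_g)$ in place of $(P, F_g)$. Your argument for $H_g(0)=0$ is in that spirit: commutation gives $Q\circ H_g=H_g\circ Q$, hence $H_g(0)$ is a root of $Q(T)-T$; since $Q(0)=0$ and $Q'(0)\in\mathfrak{m}_E$ the Weierstrass factorization $Q(T)-T=T\cdot(\mathrm{unit})$ leaves only the root $0$ once $H_g(0)\in\mathfrak{m}_E$ is established from reduction modulo $\varpi_E$. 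For $Q'(0)\neq 0$, by contrast, you do not re-run Berger's argument on $(Q,w)$; instead you exploit $u_\tau\in\mathcal{O}_E[\![w]\!]$ to write $u_\tau=U(w)$ and transfer the known $P'(0)\neq 0$ via the semi-conjugation $P^\tau\circ U=U\circ Q$, reading off $(P^\tau)'(0)=Q'(0)^k$. This is a clean reduction and yields the extra identity $\tau(P'(0))=Q'(0)^k$ for free, essentially a precursor to Lemma \ref{ftau}.

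The omission: you assert that $U$ has zero constant term ($k\geq 1$) without argument. Reduction modulo $\varpi_E$ only gives $U(0)\in\mathfrak{m}_E$; to conclude $U(0)=0$, set $T=0$ in $P^\tau\circ U=U\circ Q$ and use $Q(0)=0$ to get $P^\tau(U(0))=U(0)$, then invoke the same Weierstrass observation (now for $P^\tau$, which satisfies $P^\tau(0)=0$ and $(P^\tau)'(0)\in\mathfrak{m}_E$) to force $U(0)=0$. Since this is exactly the argument you already deploy for $H_g(0)$, the gap is minor and filled by a tool you already have in hand. Note also that the element lying in $\mathcal{O}_E[\![w]\!]$ is $u_\tau$, with functional equations involving $P^\tau$ and $F_g^\tau$ rather than $P$ and $F_g$; since $\tau\in\Gal(E/\Qp)$ this changes nothing in the argument, but the notation should reflect it.
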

\begin{proof}
The proof is the same as for proposition \ref{reg} and lemma \ref{Pdérivé}.
\end{proof}

We now define $\kappa(g) = H_g'(0)$, so that $g \mapsto \kappa(g)$ defines a character $\kappa: \Gamma_E \to \mathcal{O}_E^{\times}$. The properties of regularity of the power series $Q$ and $\{H_g(T)\}$ allow us to prove that $\kappa$ is injective: 

\begin{coro}
\label{kappa injectif}
The character $\kappa : \Gamma_E \to \mathcal{O}_E^{\times}$ is injective.
\end{coro}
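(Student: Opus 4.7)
The plan is to mirror, at the level of $w$ and $\Gamma_E$, the argument already used in Corollary \ref{injectif} for $u$ and $\Gamma_K$. There are two ingredients: (i) a uniqueness statement of Lubin type applied to the power series commuting with $Q$, which will force $H_g$ to be the identity series whenever $\kappa(g) = 1$; and (ii) a density-type statement saying that $w$ carries enough information to recover $E_\infty$, so that $g(w) = w$ implies $g$ is trivial on $E_\infty$.

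For (i), since the action of $g \in \Gamma_E$ on $\Atplus$ commutes with $\phi_d$, the equality $g \circ \phi_d(w) = \phi_d \circ g(w)$ translates into $H_g \circ Q = Q \circ H_g$. By the preceding proposition $Q'(0) \neq 0$, and since $Q(T) \equiv T^d \bmod \mathfrak{m}_E$ with $d \geq q \geq 2$, one has in fact $Q'(0) \in \mathfrak{m}_E \setminus \{0\}$. Applying \cite[Prop. 1.1]{lubin1994nonarchimedean} exactly as in Corollary \ref{injectif}, any power series in $T \cdot \mathcal{O}_E[\![T]\!]$ commuting with $Q$ is uniquely determined by its derivative at $0$. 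Since $H_g \in T \cdot \mathcal{O}_E[\![T]\!]$ and since the identity series $T$ also belongs to that set and commutes with $Q$, the assumption $\kappa(g) = H_g'(0) = 1$ forces $H_g(T) = T$, hence $g(w) = w$.

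For (ii), I want to deduce $g = \id$ in $\Gamma_E = \Gal(E_\infty/E)$ from $g(w) = w$. By Proposition \ref{existvtau} together with Proposition \ref{existw}, the element $u_\tau$ lies in $R = \mathcal{O}_E[\![w]\!]$, so there exists $U(T) \in \mathcal{O}_E[\![T]\!]$ with $u_\tau = U(w)$. Since $\phi_d$ is $\mathcal{O}_E$-linear, applying $\theta \circ \phi_d^{-n}$ gives $v_n = U(v'_n)$, where $v'_n := \theta \circ \phi_d^{-n}(w) \in \mathcal{O}_{E_n}$ by the defining property of $R$. In particular $E(v_n) \subset E(v'_n)$, and by Proposition \ref{existvtau} we have $E_\infty = \bigcup_n E(v_n) \subset \bigcup_n E(v'_n)$. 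Now $g$ commutes with both $\theta$ and $\phi_d$ on $\Atplus$ (and on $\C_p$), so $g(w) = w$ yields $g(v'_n) = v'_n$ for every $n$; consequently $g$ fixes $\bigcup_n E(v'_n)$ and \textit{a fortiori} fixes $E_\infty$, which means $g = \id$ in $\Gamma_E$.

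The main obstacle here is really step (ii): Lubin's uniqueness only gives that $g$ acts trivially on the single element $w$, and one has to argue that $w$ is "rich enough" inside $\Atplus$ to recover $E_\infty$. The trick is to go through $u_\tau \in \mathcal{O}_E[\![w]\!]$ and to use Proposition \ref{existvtau}, which provides the explicit presentation $E_\infty = \bigcup_n E(v_n)$ with $v_n = \theta \circ \phi_d^{-n}(u_\tau)$; pushing this description across $u_\tau = U(w)$ shows that fixing $w$ is at least as strong as fixing all the $v_n$, and thus as strong as fixing $E_\infty$.
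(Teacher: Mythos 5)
Your proof is correct and follows essentially the same approach as the paper: use Lubin's rigidity result \cite[Prop.~1.1]{lubin1994nonarchimedean} applied to $Q$ to reduce injectivity of $\kappa$ to the statement that $g(w)=w$ forces $g=\id$, then pass through $u_\tau\in\mathcal{O}_E[\![w]\!]$ and Proposition~\ref{existvtau} to conclude. The detour through the elements $v'_n=\theta\circ\phi_d^{-n}(w)$ is unnecessary — once you have $g(u_\tau)=U(g(w))=U(w)=u_\tau$, you can apply $\theta\circ\phi_d^{-n}$ directly to get $g(v_n)=v_n$ and invoke $E_\infty=\bigcup_n E(v_n)$ — but it is harmless and reaches the same conclusion.
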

\begin{proof}
As in the proof of corollary \ref{injectif}, this follows from proposition 1.1 of \cite{lubin1994nonarchimedean}, which shows that if $Q'(0) \in \mathfrak{m}_E \setminus \{0\}$, then a power series $f(T) \in T\cdot \mathcal{O}_E[\![T]\!]$ which commutes with $Q$ is determined by $f'(0)$. Thus $H_g$ is entirely determined by $\kappa(g)$. It remains to prove that the action of $g$ on $w$ determines $g$, but if $g \in \Gamma_E$ acts trivially on $w$, then since $u_\tau \in \mathcal{O}_E[\![w]\!]$, we get $g(u_\tau)=u_\tau$, and so $g$ acts trivially on $E_\infty$.
\end{proof}

Since $Q'(0) \neq 0$, we can also define a logarithm attached to $Q$ as in proposition \ref{loglubin}: 
\begin{prop}
There exists a unique power series $L_Q(T) \in K[\![T]\!]$, holomorphic on the open unit disk and such that:
\begin{enumerate}
\item $L_Q(T) = T+O(T^2)$ ;
\item $L_Q \circ Q(T) = Q'(0)\cdot L_Q(T)$ ;
\item $L_Q \circ H_g(T) = \kappa(g)\cdot L_Q(T)$ for $g \in \Gamma_E$.
\end{enumerate}
\end{prop}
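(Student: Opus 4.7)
The statement mirrors Proposition \ref{loglubin} exactly, with the triple $(Q, H_g, \kappa)$ playing the role of $(P, F_g, \eta)$, so the plan is to rerun that proof verbatim by invoking Propositions 1.2, 1.3 and 2.2 of \cite{lubin1994nonarchimedean} applied to $Q$ and to the $H_g$. The only work is to check that the hypotheses of those propositions are met in the present setting.

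First, I would record the regularity properties of $Q$: by lemma \ref{ajuster w}, $Q \in T\cdot\mathcal{O}_E[\![T]\!]$ with $Q(T) \equiv T^d \mod \mathfrak{m}_E$, so $Q'(0)\in \mathfrak{m}_E$, and the preceding proposition gives $Q'(0) \neq 0$. Lubin's Proposition 1.2 then produces the formal power series
$$L_Q(T) = \lim_{n \to +\infty}\frac{Q^{\circ n}(T)}{Q'(0)^n} \in K[\![T]\!],$$
which satisfies $L_Q(T) = T + O(T^2)$ and $L_Q \circ Q = Q'(0)\cdot L_Q$, and is characterized uniquely by these properties. Writing $Q(T) = T\cdot S(T)$, the product expansion $L_Q(T) = T\cdot \prod_{n \geq 0} S(Q^{\circ n}(T))/S(0)$, which is Lubin's Proposition 2.2, shows that $L_Q$ is holomorphic on the open unit disk since the iterates $Q^{\circ n}(T)$ converge uniformly to $0$ on every disk of radius $<1$.

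The only substantive point is the third item, $L_Q \circ H_g = \kappa(g)\cdot L_Q$. By Lubin's Proposition 1.3, this follows once I know that $H_g \in T\cdot\mathcal{O}_E[\![T]\!]$ with $H_g'(0) = \kappa(g) \in \mathcal{O}_E^\times$ and that $H_g \circ Q = Q \circ H_g$ as power series. The first two facts are immediate from lemma \ref{lemm H_g(w)}, the regularity statement $H_g(0)=0$, and the definition of $\kappa$. For the commutation, I would use that $g \in \Gamma_E$ and $\phi_d$ commute as operators on $\Atplus$: applying both to $w$ and using that $Q$ and $H_g$ have coefficients in $\mathcal{O}_E$ (hence fixed by $\phi_d$) yields the identity $Q(H_g(w)) = H_g(Q(w))$ in $\Atplus$. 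Since $\mathcal{O}_E[\![w]\!]$ is a genuine power series ring in the variable $w$ (proposition \ref{existw}), this identity forces the equality $Q \circ H_g = H_g \circ Q$ as elements of $\mathcal{O}_E[\![T]\!]$.

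I do not foresee any real obstacle: the argument is parallel to the $(P, F_g, \eta)$ case already established in proposition \ref{loglubin}, and the commutation $Q \circ H_g = H_g \circ Q$ is exactly the lift to power series of the commutation of $g$ and $\phi_d$ on $w$. The only minor care is to make sure that the change of variable of lemma \ref{ajuster w} has been performed, so that $Q(0)=0$ and the logarithm is defined as a series centered at $0$.
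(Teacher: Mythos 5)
Your proposal is correct and takes essentially the same route as the paper: the paper simply remarks that ``since $Q'(0)\neq 0$, we can also define a logarithm attached to $Q$ as in proposition \ref{loglubin}'', which in turn cites Propositions 1.2, 2.2 and 1.3 of Lubin. The one point you spell out that the paper leaves tacit — deducing $Q\circ H_g=H_g\circ Q$ as formal power series from the commutation of $g$ and $\phi_d$ on $w$, using that $\mathcal{O}_E[\![w]\!]$ is a genuine power series ring by proposition \ref{existw} — is exactly the right justification and is a welcome addition, not a deviation.
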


We also define ``conjugates'' of the period $w$, which are the analogs of the periods $u_\tau$: we define, for $\tau \in \Sigma=\Gal(E/\Qp)$, $w_\tau = (\tau \otimes \phi^{n(\tau)})(w) \in \Atplus$, where $n(\tau) \in \N$ is such that $\tau =\phi^{n(\tau)}$ on $k_E$. Note that even though we fixed a particular element $\tau$ of $\Sigma$ in proposition \ref{existvtau}, we still use the notation $\tau$ for any element of $\Sigma$ to avoid too much notations.

Just as in proposition \ref{si K=E=Galois, car cristallin}, we have the following result:
\begin{prop}
The character $\kappa$ is crystalline with nonnegative weights.
\end{prop}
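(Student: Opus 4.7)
The plan is to mimic the argument used for proposition \ref{si K=E=Galois, car cristallin}, applied now to the triple $(w, Q, \kappa)$ in place of $(u, P, \eta)$. By the reductions recorded just before lemma \ref{extensions En}, we may assume that $E/\Qp$ is Galois, that $E \supset K$, and that $E_\infty/E$ is totally ramified, so the conjugation machinery of §\ref{relevementHT} transports verbatim to the present setting with $\Sigma = \Gal(E/\Qp)$ acting on the series $Q$ and $H_g$.

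The first step is to evaluate the logarithm $L_Q(T)$ at $w$. Since $L_Q$ is holomorphic on the open unit disk and $w \in \Atplus$, the series $L_Q(w)$ converges in $\Btrigplus$, and the defining identities of $L_Q$ yield $\phi_d(L_Q(w)) = Q'(0) \cdot L_Q(w)$ and $g(L_Q(w)) = \kappa(g) \cdot L_Q(w)$ for every $g \in \Gamma_E$. Because $L_Q(T) = T + O(T^2)$ and $w \neq 0$, the period $L_Q(w)$ is nonzero. I would then repeat the construction for each $\tau \in \Sigma$: with $w_\tau = (\tau \otimes \phi^{n(\tau)})(w) \in \Atplus$ and the conjugate series $L_Q^\tau \in \tau(E)[\![T]\!]$, the same computation produces a nonzero element $L_Q^\tau(w_\tau) \in \Btrigplus$ on which $\Gamma_E$ acts by the conjugate character $\tau \circ \kappa$. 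Having such a period in $\Btrigplus$ for every $\tau$ certifies that $\kappa$, viewed as an $E$-linear character of $\Gamma_E$, is crystalline.

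Nonnegativity of the weight at each $\tau$ falls out for free, since $L_Q^\tau(w_\tau) \in \Btrigplus \subset \Bdrplus$ forces the filtration on the $\Bdrplus$-line it generates to start in nonnegative degree. I expect no real obstacle in carrying this out: the genuine difficulty, namely producing $w \in \Atplus$ satisfying $\phi_d(w) = Q(w)$ with $Q \in T \cdot \mathcal{O}_E[\![T]\!]$ and $\mathcal{O}_E[\![w]\!]$ stable under $\Gamma_E$, has already been handled by proposition \ref{existw} and lemma \ref{ajuster w}, and once this is in place the logarithm construction runs identically to \cite[Prop.~5.2]{Ber14iterate}.
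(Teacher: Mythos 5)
Your proof is correct and follows essentially the same approach as the paper, which simply remarks that the argument is identical to that of \cite[Prop.~5.2]{Ber14iterate} applied to the triple $(w,Q,\kappa)$ instead of $(u,P,\eta)$; you have unrolled that citation into the explicit logarithm-and-conjugate-periods construction, which is exactly what that reference does.
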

\begin{proof}
Again, this is the same proof as in \cite[Prop. 5.2]{Ber14iterate}.
\end{proof}

We also define for $\tau \in \Sigma, w_{\tau}^n = \theta \circ \phi_d^{-n}(w_{\tau})$ and $w_{\tau}^{n,k} = (Q^{\tau})^{\circ k}(w_{n+k}^{p^{n(\tau)}})$. These $w_{\tau}^{n,k}$ will allow us to give some approximations of the elements $w_{\tau}^n$. 

\begin{lemm}
One has $\theta\circ\phi_d^{-n}(w_{\tau}) = \lim_{k \rightarrow +\infty}w_\tau^{n,k}$.
\end{lemm}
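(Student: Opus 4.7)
The plan is to reduce the lemma to a perturbation-of-iteration statement: on one hand, the relation $\phi_d(w) = Q(w)$ gives a dynamical identity for the sequence $w_\tau^n$; on the other hand, the sequence $w_{n+k}^{p^{n(\tau)}}$ differs from $w_\tau^{n+k}$ by an error that is small mod $\varpi_E$, and iterating $Q^\tau$ is contracting enough to turn this mod-$\varpi_E$ error into true $p$-adic convergence.

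First, I would transport the relation $\phi_d(w) = Q(w)$ by applying $\tau \otimes \phi^{n(\tau)}$. Since $\phi_d$ is a power of the absolute Witt Frobenius, and $\tau$ only acts on the $\mathcal{O}_E$-factor, this operator commutes with $\phi_d$; applying it to both sides therefore yields $\phi_d(w_\tau) = Q^\tau(w_\tau)$ in $\Atplus$. Applying $\theta \circ \phi_d^{-(n+1)}$ and using that $\phi_d^{-1}$ commutes with the $\mathcal{O}_E$-linear power series $Q^\tau$, one obtains $Q^\tau(w_\tau^{n+1}) = w_\tau^n$, whence $(Q^\tau)^{\circ k}(w_\tau^{n+k}) = w_\tau^n$ \emph{exactly} for every $k \geq 0$. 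Thus it suffices to show $(Q^\tau)^{\circ k}(w_{n+k}^{p^{n(\tau)}}) - (Q^\tau)^{\circ k}(w_\tau^{n+k}) \to 0$ in $\mathcal{O}_{\C_p}$.

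Second, I would compare $w_{n+k}^{p^{n(\tau)}}$ and $w_\tau^{n+k}$ modulo $\varpi_E$. The reduction of $w_\tau = (\tau \otimes \phi^{n(\tau)})(w)$ in $\Etplus$ is $\overline{w}^{p^{n(\tau)}}$ (since $\tau$ reduces to Frobenius raised to $n(\tau)$ on the residue field, and $\phi^{n(\tau)}$ is the $p^{n(\tau)}$-power on $\Etplus$). Using the description of $\theta \circ \phi_d^{-m}$ at the level of components of $\Etplus$, together with the identity $(x^{(m+1)})^p = x^{(m)}$ in $\Etplus$, both $w_\tau^{n+k}$ and $w_{n+k}^{p^{n(\tau)}}$ lift the component $\overline{w}^{(f(n+k) - n(\tau))}$, so $\delta_k := w_\tau^{n+k} - w_{n+k}^{p^{n(\tau)}}$ has $v_p(\delta_k) \geq 1/e$.

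Third, I would propagate this $\varpi_E$-congruence through $k$ iterations of $Q^\tau$. Writing $Q^\tau(T) = T^d + \varpi_E S^\tau(T)$, one has $(Q^\tau)^{\circ k}(T) \equiv T^{d^k} \pmod{\varpi_E}$, so the expansion of $(Q^\tau)^{\circ k}(w_\tau^{n+k} - \delta_k) - (Q^\tau)^{\circ k}(w_\tau^{n+k})$ is controlled by the binomial expansion of $((w_\tau^{n+k}) - \delta_k)^{d^k} - (w_\tau^{n+k})^{d^k}$ plus $\varpi_E$-error. The binomial terms $\binom{d^k}{i} (w_\tau^{n+k})^{d^k-i} (-\delta_k)^i$ have valuation at least $fk - v_p(i) + i/e$, which tends to infinity uniformly in $i \in \{1, \ldots, d^k - 1\}$ as $k \to \infty$; combined with the fact that $v_p(w_\tau^j)$ decreases in a controlled geometric way, an iterative argument on the successive errors under $Q^\tau$ shows that their valuations diverge.

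The hard part will be the third step: the naive first-order Taylor estimate only yields a bound $v_p \geq 1/e$ (bounded, not tending to infinity), because $v_p((Q^\tau)^{\circ k \prime}(w_\tau^{n+k}))$ is a convergent series. The key extra ingredient is the full binomial structure of $(T - \delta_k)^{d^k}$ modulo $\varpi_E$, which gives a genuine gain of $v_p(\binom{d^k}{i}) \to \infty$ at each level $i$, and one must carefully check that the $\varpi_E$-perturbations introduced by $S^\tau$ at each step of the iteration do not cancel this gain.
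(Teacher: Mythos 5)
The first two steps of your proposal are correct and match the route taken by the paper (which refers to \cite[Lemm.~5.3]{Ber14iterate}): transporting the relation $\phi_d(w)=Q(w)$ by $\tau\otimes\phi^{n(\tau)}$ gives $\phi_d(w_\tau)=Q^\tau(w_\tau)$ and hence the \emph{exact} identity $(Q^\tau)^{\circ k}(w_\tau^{n+k})=w_\tau^n$; and comparing $\Etplus$-components gives a congruence $w_\tau^{n+k}\equiv w_{n+k}^{p^{n(\tau)}}$ modulo a fixed ideal of $\mathcal{O}_{\Cp}$ (the correct lower bound on the valuation of the difference is the Fontaine constant $c$ rather than $1/e$, but that is harmless: all that matters is positivity).

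The third step, however, contains a genuine gap, and it is exactly the content of the lemma. Your proposed mechanism is the global decomposition $(Q^\tau)^{\circ k}(T)=T^{d^k}+\varpi_E\,U_k(T)$ followed by a binomial analysis of $T^{d^k}$. But then $(Q^\tau)^{\circ k}(a)-(Q^\tau)^{\circ k}(b)=(a^{d^k}-b^{d^k})+\varpi_E\bigl(U_k(a)-U_k(b)\bigr)$, and the second summand has valuation only $\geq \tfrac{1}{e}+v_p(a-b)$, a bound that does \emph{not} grow with $k$. So the ``gain of $v_p\binom{d^k}{i}$'' that you isolate in the first summand is simply swamped by the $\varpi_E$-tail; the check you defer to the end (``one must carefully check that the $\varpi_E$-perturbations \dots do not cancel this gain'') cannot be carried out from the global expansion. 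What the argument actually needs is the elementary \emph{single-step} estimate: for $a,b\in\mathcal{O}_{\Cp}$, writing $Q^\tau(T)=\sum q_iT^i$ with $q_d\in\mathcal{O}_E^\times$ and $q_i\in\mathfrak{m}_E$ for $i\neq d$, one gets
\[
v_p\bigl(Q^\tau(a)-Q^\tau(b)\bigr)\;\geq\;v_p(a-b)+\min\bigl(v_p(d),\,v_p(a-b),\,\tfrac{1}{e}\bigr),
\]
the first two entries coming from the binomial expansion of $a^d-b^d$ and the last from $q_i\in\mathfrak{m}_E$. Since the error starts at $\geq c>0$ and only increases, iterating this $k$ times yields a gain of at least $k\cdot\min(v_p(d),c,\tfrac{1}{e})\to\infty$. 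It is this one-step-at-a-time contraction, and not a global binomial of the $k$-fold iterate, that makes the limit work.

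A smaller inaccuracy: your claim that $v_p\bigl(((Q^\tau)^{\circ k})'(w_\tau^{n+k})\bigr)$ is a convergent series is false. Since $Q^\tau(T)\equiv T^d\pmod{\mathfrak{m}_E}$ and $d$ is a power of $p$, every coefficient of $(Q^\tau)'$ has valuation $\geq\min(v_p(d),\tfrac1e)>0$, so $v_p\bigl((Q^\tau)'(b)\bigr)\geq\min(v_p(d),\tfrac1e)$ for all $b\in\mathcal{O}_{\Cp}$, and the product of $k$ such factors has valuation growing linearly in $k$. The real reason a naive first-order Taylor bound on the $k$-fold iterate fails is the higher-order Taylor terms, whose valuation is only $\geq 2v_p(a-b)$; this again points to the necessity of the per-step iteration above.
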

\begin{proof}
This is the same proof as lemma 5.3 of \cite{Ber14iterate}. 
\end{proof}

\begin{lemm}
\label{aprox}
For $M > 0$, there exists $j \geq 0$ such that $v_E(w_{\tau}^n-w_{\tau}^{n,j}) \geq M$ for all $n \geq 1$.
\end{lemm}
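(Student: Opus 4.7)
The plan is to unwind $w_\tau^{n,k}$ using the functional equation $Q^\tau(w_\tau^{m+1}) = w_\tau^m$ and then to iterate a contraction estimate for $Q^\tau$ on the maximal ideal, with a uniform lower bound at the base of the tower coming from the field-of-norms construction. The functional equation itself is obtained by applying the twist $\tau \otimes \phi^{n(\tau)}$ to the identity $\phi_d(w) = Q(w)$ of Lemma~\ref{ajuster w}: since $\phi_d$ is $\mathcal{O}_E$-linear and a power of $\phi$, it commutes with both $\tau$ and $\phi^{n(\tau)}$, which gives $\phi_d(w_\tau) = Q^\tau(w_\tau)$; then applying $\theta \circ \phi_d^{-(m+1)}$ and using that $\theta$ is a ring homomorphism while $Q^\tau \in \mathcal{O}_E[\![T]\!]$ yields the claim. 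Iterating $k$ times produces $w_\tau^n = (Q^\tau)^{\circ k}(w_\tau^{n+k})$, so that
\[ w_\tau^n - w_\tau^{n,k} = (Q^\tau)^{\circ k}(w_\tau^{n+k}) - (Q^\tau)^{\circ k}(w_{n+k}^{p^{n(\tau)}}). \]

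Next I would establish the contraction estimate. Write $Q^\tau(a) - Q^\tau(b) = (a-b)\, \tilde Q^\tau(a,b)$ with $\tilde Q^\tau \in \mathcal{O}_E[\![a,b]\!]$ and $\tilde Q^\tau(b,b) = (Q^\tau)'(b)$, and then $\tilde Q^\tau(a,b) = (Q^\tau)'(b) + (a-b)\, R^\tau(a,b)$ for some $R^\tau \in \mathcal{O}_E[\![a,b]\!]$. The hypothesis $Q(T) \equiv T^d \pmod{\mathfrak{m}_E}$, combined with $v_E(d) \geq 1$ (as $d$ is a power of $p$ and $e \geq 1$), forces $v_E((Q^\tau)'(b)) \geq 1$ for every $b$ in the maximal ideal. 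Since $w_\tau^m$ and $w_m^{p^{n(\tau)}}$ all lie in the maximal ideal, one obtains
\[ v_E\bigl(Q^\tau(a) - Q^\tau(b)\bigr) \geq v_E(a-b) + \min\bigl(1,\, v_E(a-b)\bigr), \]
so each application of $Q^\tau$ at least doubles $v_E(a-b)$ when $v_E(a-b) < 1$ and otherwise adds $1$. Starting from a positive valuation, a bounded number of iterations therefore suffices to reach $v_E \geq M$, and crucially the number of iterations depends only on $M$ and on the starting valuation, not on $n$.

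It remains to prove that $v_E(w_\tau^m - w_m^{p^{n(\tau)}}) \geq c'$ uniformly in $m$ for some positive constant $c'$. This is precisely where the Wintenberger constant $c > 0$ from the beginning of Section~2 enters: using the description $\Etplus = \varprojlim_{x \mapsto x^q} \mathcal{O}_{\C_p}$ together with \cite[1.2.3]{fontaine1994corps}, one has $\theta \circ \phi^{n(\tau)}(x) \equiv \theta(x)^{p^{n(\tau)}}$ modulo $\mathfrak{a}_{\C_p}^c$ for every $x \in W(\Etplus)$, and extending $\mathcal{O}_E$-linearly (while using that $\tau$ preserves $v_E$) gives a uniform bound with $c'$ comparable to $c$. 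Combining this with the contraction estimate and choosing $j$ so that the iterated increase of a starting valuation $c'$ under $x \mapsto x + \min(1,x)$ exceeds $M$ produces the desired inequality $v_E(w_\tau^n - w_\tau^{n,j}) \geq M$ uniformly in $n$.

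The main obstacle will be making the field-of-norms compatibility in the last step precise: $w$ lives in the twisted ring $\Atplus = \mathcal{O}_E \otimes_{W(k_E)} W(\Etplus)$, and the interaction of the twist $\tau \otimes \phi^{n(\tau)}$ with $\theta$ and with $\phi_d$ has to be tracked carefully on Teichmüller components, with attention to the ramification of $E$ when converting between the normalizations of $v_E$, $v_{\C_p}$, and $c$. Once this compatibility is spelled out, the contraction estimate and the inductive conclusion become routine bookkeeping.
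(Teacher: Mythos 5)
Your proof is correct, and it takes essentially the same route as the reference the paper defers to here (Berger, ``Iterate extensions and relative Lubin-Tate groups'', Lemm.\ 5.4): unwind via the functional equation $Q^\tau(w_\tau^{m+1}) = w_\tau^m$, apply the contraction estimate for $Q^\tau$ (which holds because $Q^{\tau\prime} \in \mathfrak{m}_E[\![T]\!]$, as $d$ is a power of $p$ and $Q^\tau \equiv T^d \bmod \mathfrak{m}_E$), and feed in a uniform lower bound on the base difference $v_E\bigl(w_\tau^{m} - w_m^{p^{n(\tau)}}\bigr)$. One simplification worth recording for that last step: you do not actually need the Wintenberger constant $c$ or the normalization bookkeeping from \cite[1.2.3]{fontaine1994corps}. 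Since $\tau \otimes \phi^{n(\tau)}$ reduces to $x \mapsto x^{p^{n(\tau)}}$ on $\Etplus = \Atplus/\varpi_E\Atplus$, one has $(\tau \otimes \phi^{n(\tau)})(y) - y^{p^{n(\tau)}} \in \varpi_E\Atplus$ for every $y \in \Atplus$; applying $\theta$ (a ring map sending $\varpi_E\Atplus$ into $\varpi_E\mathcal{O}_{\Cp}$) to $y = \phi_d^{-m}(w)$ gives $v_E\bigl(w_\tau^{m} - w_m^{p^{n(\tau)}}\bigr) \geq 1$ uniformly in $m$, so one may take $c' = 1$ and then $j = \lceil M \rceil$ already suffices. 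This avoids the ``main obstacle'' you flag at the end and makes the bookkeeping genuinely routine.
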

\begin{proof}
See \cite[Lemm. 5.4]{Ber14iterate}.
\end{proof}

For $\tau \in \Sigma$, let $r_{\tau}'$ be the weight of $\kappa$ at $\tau$. We then have:
\begin{prop}
\label{algphiit}
If $\tau \in \Sigma$, the following are equivalent:
\begin{enumerate}
\item $r_{\tau}' \geq 1$ ;
\item $L_Q^{\tau}(w_{\tau}) \in \mathrm{Fil}^1\Bdr$ ;
\item $\theta(w_{\tau}) \in \Qpbar$ ;
\item $\theta(w_{\tau}) \in \Lambda(Q^{\tau})$ ;
\item $w_{\tau} \in \cup_{j \geq 0}\phi_d^{-j}(\mathcal{O}_E[\![w]\!])$.
\end{enumerate}
\end{prop}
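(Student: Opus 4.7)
The plan is to close the five-condition cycle by first reproducing the proof of Proposition \ref{alg} for $(1)\Leftrightarrow(2)\Leftrightarrow(3)\Leftrightarrow(4)$, and then grafting $(5)$ on via Proposition \ref{existw}. For the first four conditions, I would transcribe the proof of Proposition \ref{alg} verbatim with $(Q,H_g,L_Q,\kappa)$ in place of $(P,F_g,L_P,\eta)$: the identity $g(L_Q^\tau(w_\tau))=\tau(\kappa(g))\cdot L_Q^\tau(w_\tau)$ gives $(1)\Leftrightarrow(2)$; holomorphy of $L_Q^\tau$ on the open unit disc forces $\theta(w_\tau)\in\Qpbar$ as soon as $L_Q^\tau(\theta(w_\tau))=0$, giving $(2)\Rightarrow(3)$; picking $g\in\Gamma_E$ close enough to $\id$ to fix $\theta(w_\tau)$ and invoking Proposition \ref{commutent} on the commuting pair $(H_g^\tau,Q^\tau)$ gives $(3)\Rightarrow(4)$; and iterating the relation $L_Q^\tau\circ Q^\tau=(Q^\tau)'(0)\cdot L_Q^\tau$ together with $\theta(\phi_d^{j_0}(w_\tau))=(Q^\tau)^{\circ j_0}(\theta(w_\tau))=0$ gives $(4)\Rightarrow(2)$.

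For $(5)\Rightarrow(3)$, I would simply apply Proposition \ref{existw} to $\phi_d^j(w_\tau)\in\mathcal{O}_E[\![w]\!]$: it yields $\theta\circ\phi_d^{j-n}(w_\tau)\in\mathcal{O}_{E_n}$ for all $n\geq 1$, and picking any $n>j$ places $w_\tau^{n-j}$ in $\mathcal{O}_{E_n}\subset\Qpbar$. The recursion $Q^\tau(w_\tau^{m+1})=w_\tau^m$ — itself a direct consequence of $\phi_d(w_\tau)=Q^\tau(w_\tau)$ and the $\mathcal{O}_E$-linearity of $\phi_d$ — then propagates algebraicity down to $\theta(w_\tau)=w_\tau^0$.

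For $(4)\Rightarrow(5)$, the new content, the plan is as follows. Given $(Q^\tau)^{\circ j_0}(\theta(w_\tau))=0$, I would first upgrade $\theta(w_\tau)\in\Qpbar$ to $\theta(w_\tau)\in E_\infty$, hence to $E_{k_0}$ for some $k_0$, by observing that $\theta(w_\tau)=\lim_k w_\tau^{0,k}$ with $w_\tau^{0,k}\in\mathcal{O}_{E_k}$ (Lemma \ref{aprox}) forces $\theta(w_\tau)\in\hat{E}_\infty\cap\Qpbar=E_\infty$ (using the strict APF structure of $E_\infty/E$ recorded in Lemma \ref{extensions En}). By Proposition \ref{existw} it then suffices to find $j$ such that $w_\tau^m\in\mathcal{O}_{E_{m+j}}$ for all $m\geq 1$ and $(Q^\tau)^{\circ(j-n)}(\theta(w_\tau))\in\mathcal{O}_{E_n}$ for $1\leq n\leq j$. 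The second family is automatic as soon as $j\geq j_0+k_0$: the value is either $0$ (when $j-n\geq j_0$) or lies in $E_{k_0}\subset E_n$ (when $n\geq k_0$). The first family, and the step I expect to be the main obstacle, requires the uniform shift $j_1$ with $w_\tau^m\in\mathcal{O}_{E_{m+j_1}}$ for all $m\geq 1$. My tool is Lemma \ref{aprox}: for any $M$ it supplies $j_1$ with $v_E(w_\tau^m-w_\tau^{m,j_1})\geq M$ uniformly in $m$, and since $w_\tau^{m,j_1}\in\mathcal{O}_{E_{m+j_1}}$, Krasner's lemma places $w_\tau^m$ in $E_{m+j_1}$ once $M$ exceeds the minimum conjugate separation of $w_\tau^{m,j_1}$ over $E_{m+j_1}$. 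The delicate point is to control this Krasner threshold uniformly in $m$; I would derive it from the uniform bound on the ramification of $E_{m+j_1}/E_m$ afforded by the $\phi$-iterate shape of $(Q^\tau)^{\circ k}$. Taking $j=\max(j_1,j_0+k_0)$ then completes the proof.
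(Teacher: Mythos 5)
Your treatment of the cycle $(1)\Leftrightarrow(2)\Leftrightarrow(3)\Leftrightarrow(4)$ and of $(5)\Rightarrow(3)$ matches the paper, and your explicit handling of $\theta(w_\tau)\in\widehat{E}_\infty\cap\Qpbar=E_\infty$ together with the $j_0,k_0$ bookkeeping for $1\leq n\leq j$ is actually more careful than what the paper writes out. These additions are correct and welcome. The remaining direction, however, is where the two proofs genuinely diverge, and where yours has a gap.

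For $(3)/(4)\Rightarrow(5)$ you propose to apply Krasner directly: from $v_E(w_\tau^m-w_\tau^{m,j_1})\geq M$ uniformly (Lemma~\ref{aprox}) deduce $w_\tau^m\in E_{m+j_1}$ once $M$ dominates a uniform Krasner threshold. Two problems. First, a minor one: as stated the threshold involves the conjugate separation of $w_\tau^{m,j_1}$ over $E_{m+j_1}$, which is vacuous since $w_\tau^{m,j_1}\in E_{m+j_1}$; Krasner requires control of the conjugate separations of $w_\tau^m$ (the element whose field of definition you want to pin down). Second, and substantively, the uniform lower bound on those conjugate separations is not an afterthought you can ``derive from the $\phi$-iterate shape of $(Q^\tau)^{\circ k}$'' --- it is the whole content of the step, and without an argument the proof is incomplete. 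The paper sidesteps this entirely. It does \emph{not} apply Krasner to the pair $(w_\tau^m,w_\tau^{m,j_1})$; instead it runs an induction on $n$, exploiting the recursion $Q^\tau(w_\tau^n)=w_\tau^{n-1}$: under the inductive hypothesis $w_\tau^{n-1}\in\mathcal{O}_{E_{n+j-1}}$, the auxiliary power series $R_n(T)=Q^\tau(w_\tau^{n,j}+T)-w_\tau^{n-1}$ has coefficients in $\mathcal{O}_{E_{n+j}}$, has $w_\tau^n-w_\tau^{n,j}$ as a root, and its Newton polygon exhibits a \emph{unique} root of slope $\geq M$ once $M\geq 1+v_E((Q^\tau)'(w_\tau^n))$. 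Uniqueness of that root forces it to lie in $E_{n+j}$, so $w_\tau^n\in\mathcal{O}_{E_{n+j}}$ and the induction closes. The only uniform input needed is the bound on $v_E((Q^\tau)'(w_\tau^n))$, which is immediate since $w_\tau^n\to 0$ and $(Q^\tau)'(0)\neq 0$; no bound on conjugate separations of $w_\tau^m$ ever has to be produced.

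So: morally the two approaches are in the same spirit (both are ``two close algebraic elements must generate the same field''), but the induction via $R_n$ and its Newton polygon is what makes the uniform-in-$n$ statement provable with the information at hand, whereas a direct Krasner application leaves exactly the hard estimate open. If you want to salvage your route, you would have to prove the uniform conjugate-separation bound, and the cleanest way to do that is essentially to redo the paper's Newton polygon computation for $R_n(T)$ --- at which point you might as well follow the paper's induction.
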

\begin{proof}
The fact that the first four items are equivalent has already been proven in the proof of proposition \ref{alg}. It thus remains to prove the equivalence with item $(5)$. The proof is similar with the proof of \cite[Prop. 5.6]{Ber14iterate}.

 If $w_{\tau} \in \cup_{j \geq 0}\phi_d^{-j}(\mathcal{O}_E[\![w]\!])$, then since $\phi_d(w_{\tau}) = Q^{\tau}(w_{\tau})$ and since $\mathcal{O}_E[\![w]\!] = \{ x \in \Atplus, \theta \circ \phi^{-n}(x) \in \mathcal{O}_{E_n} \textrm{ for all } n \geq 1 \}$ by \ref{existw}, we get that $\theta(w_{\tau}) \in \Qpbar$. 

Now let us assume that $\theta(w_{\tau}) \in \Qpbar$. It suffices to prove that there exists $j \geq 0$ such that $w_{\tau}^n \in \mathcal{O}_{E_{n+j}}$ for all $n$ by the characterisation of $\mathcal{O}_E[\![w]\!]$ of proposition \ref{existw}. Now let $M$ be such that $M \geq 1+v_E((Q^{\tau})'(w_{\tau}^n))$ for all $n \geq 0$. By lemma \ref{aprox}, there exists $j \geq 0$ such that $v_E(w_{\tau}^n-w_{\tau}^{n,j}) \geq M$ for all $n \geq 1$. Then since $w_{\tau}^n$ is a root of $Q^{\tau}(T)-w_{\tau}^{n-1}$, $w_{\tau}^n-w_{\tau}^{n,j}$ is a root of $Q^{\tau}(w_{\tau}^{n,j}+T)-w_{\tau}^{n-1}$. If $w_{\tau}^{n-1} \in \mathcal{O}_{E_{n+j-1}}$, then $R_n(T) = Q^{\tau}(w_{\tau}^{n,j}+T)-w_{\tau}^{n-1}$ belongs to $\mathcal{O}_{E_{n+j}}[T]$, and satisfies $v_E(R_n(0)) \geq M +v_E(R'(0))$. This implies that $R_n(T)$ has a unique root of slope $v_E(R_n(0))-v_E(R_n'(0)) \geq M$ by the theory of Newton polygons. This implies that this root belongs to $E_{n+j}$. In particular, since this root is $w_{\tau}^n-w_{\tau}^{n,j}$, we get that $w_{\tau}^n \in \mathcal{O}_{E_{n+j}}$, and so this finishes the proof by induction on $n$.
\end{proof}

\begin{rema}
Proposition \ref{algphiit} is exactly why we needed to work with $\mathcal{O}_E[\![w]\!]$ instead of $\mathcal{O}_E[\![u]\!]$: we get item $5$ in proposition \ref{algphiit} (compare with proposition \ref{alg}), which allows us to write the periods corresponding to the positive weights of $\kappa$ as power series in some $\phi_d^{-n}(w)$. 
\end{rema}

If $\tau$ satisfies those conditions, we can then write $w_{\tau} = f_\tau(\phi_d^{-j_{\tau}}(w))$ for some $j_{\tau} \geq 0$ and some $f_\tau \in \mathcal{O}_E[\![T]\!]$. As in \cite[Lemm. 5.7]{Ber14iterate}, we get the following lemma:

\begin{lemm}
\label{ftau}
One has $f_\tau(0)=0$, $f_\tau'(0) \neq 0$, $Q^{\tau}\circ f_\tau(T)=f_\tau\circ Q(T)$, and $H_g^{\tau}\circ f_\tau(T) = f_\tau \circ H_g(T)$.
\end{lemm}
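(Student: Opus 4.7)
The plan is to first establish the two functional equations, then deduce the vanishing of $f_\tau$ at $0$ and the non-vanishing of its derivative at $0$, all from the identity $w_\tau = f_\tau(v)$ with $v := \phi_d^{-j_\tau}(w)$.

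For the functional equations, I would apply $\phi_d$ to both sides of $w_\tau = f_\tau(v)$. On the one hand, applying $\phi_d^{-j_\tau}$ to $\phi_d(w) = Q(w)$, and using that $\phi_d$ is $\mathcal{O}_E$-linear while $Q \in \mathcal{O}_E[\![T]\!]$, yields $\phi_d(v) = Q(v)$, so $\phi_d(f_\tau(v)) = f_\tau(Q(v))$. On the other hand, applying $\tau \otimes \phi^{n(\tau)}$ to $\phi_d(w) = Q(w)$ (this operator commutes with $\phi_d$ and twists the coefficients of $Q$ by $\tau$) gives $\phi_d(w_\tau) = Q^\tau(w_\tau) = Q^\tau(f_\tau(v))$. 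Combining, $f_\tau(Q(v)) = Q^\tau(f_\tau(v))$. To upgrade this identity of elements of $\tilde{\mathbf{A}}^+$ to an identity of power series $f_\tau \circ Q = Q^\tau \circ f_\tau$, I would invoke injectivity of the evaluation map $\mathcal{O}_E[\![T]\!] \to \tilde{\mathbf{A}}^+$ sending $T$ to $v$; after reducing modulo $\varpi_E$, this reduces to injectivity of $k_E[\![T]\!] \to \tilde{\mathbf{E}}^+$ at $\overline{v} = \overline{w}^{1/d^{j_\tau}}$, which holds since $k_E[\![\overline{v}]\!]$ sits inside the perfection of $k_E[\![\overline{w}]\!]$ within the perfect ring $\tilde{\mathbf{E}}^+$ and is itself a formal power series ring. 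The Galois identity $f_\tau \circ H_g = H_g^\tau \circ f_\tau$ follows analogously after noting that $\Gamma_E$ commutes with $\phi_d$ and acts $\mathcal{O}_E$-linearly, so that $g(v) = \phi_d^{-j_\tau}(H_g(w)) = H_g(v)$.

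For $f_\tau(0) = 0$, reducing $w_\tau = f_\tau(v)$ modulo $\varpi_E$ and noting that both $\overline{w_\tau}$ and $\overline{v}$ lie in the maximal ideal of $\tilde{\mathbf{E}}^+$ forces $\overline{f_\tau}(0) = 0$, so $f_\tau(0) \in \varpi_E \mathcal{O}_E$. Evaluating the functional equation at $T = 0$ then shows $f_\tau(0)$ is a fixed point of $Q^\tau$ in the open unit disk. A Newton polygon argument applied to $Q^\tau(T) - T$ (the coefficient of $T$ is $(Q^\tau)'(0) - 1$, a unit since $(Q^\tau)'(0) \in \mathfrak{m}_E$, while the coefficient of $T^d$ is a unit as $Q \equiv T^d \pmod{\mathfrak{m}_E}$) then rules out any nonzero fixed point with positive valuation.

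The delicate step is $f_\tau'(0) \neq 0$. I would write $f_\tau(T) = b T^k + O(T^{k+1})$ with $b \in \mathcal{O}_E \setminus \{0\}$ the first nonzero coefficient, and compare the coefficients of $T^k$ on both sides of $f_\tau \circ Q = Q^\tau \circ f_\tau$. Since $Q(T) = Q'(0) T + O(T^2)$, this yields $b \, Q'(0)^k = (Q^\tau)'(0) \, b = \tau(Q'(0)) \, b$, hence $Q'(0)^k = \tau(Q'(0))$ because $\mathcal{O}_E$ is a domain. Taking the $p$-adic valuation and using that $\tau$ is an isometry gives $k \cdot v_E(Q'(0)) = v_E(Q'(0))$, and since $Q'(0) \in \mathfrak{m}_E \setminus \{0\}$ (the analogue of Lemma~\ref{Pdérivé} for $Q$, recorded just after Lemma~\ref{lemm H_g(w)}) we have $v_E(Q'(0)) > 0$, forcing $k = 1$. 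This last valuation/isometry trick is the principal idea; the rest is routine once the functional equations are in hand.
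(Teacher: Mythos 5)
Your proposal is correct and follows essentially the same route as the paper: derive the functional equations by applying $\phi_d$ (resp.\ $g$) to $w_\tau = f_\tau(\phi_d^{-j_\tau}(w))$, then read off $f_\tau(0)=0$ from a Newton polygon argument on $Q^\tau(T)-T$ plus a positivity-of-valuation constraint, and finally get $f_\tau'(0)\neq 0$ by comparing lowest-order coefficients and taking valuations, using that $\tau$ preserves $v_E$ and $v_E(Q'(0))>0$. The one place you add genuine content is in spelling out why the identity $Q^\tau(f_\tau(v)) = f_\tau(Q(v))$ of elements of $\Atplus$ lifts to an identity of power series — the paper leaves this implicit, while you correctly reduce modulo $\varpi_E$ and use that $k_E[\![T]\!]\to\Etplus$, $T\mapsto\overline{v}$, is injective because $\overline{v}$ has positive valuation in the domain $\Etplus$; you also reorder the two halves of the $f_\tau(0)=0$ argument (reduction mod $\varpi_E$ first, Newton polygon second, instead of the reverse), which is harmless.
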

\begin{proof}
If $w_{\tau}=f_\tau(\phi_d^{-j}(w))$, then $Q^{\tau}(w_{\tau})=Q^{\tau}\circ f_\tau(\phi_d^{-j}(w))$, and then $\phi_d(w_{\tau}) = f_\tau \circ \phi_d^{-j}(w))$, so that $Q^{\tau}\circ f_\tau(T)=f_\tau\circ Q(T)$. For the same reason, we have $H_g^{\tau}\circ f_\tau(T)=f_\tau\circ H_g(T)$. Evaluating the relation $Q^{\tau} \circ f_\tau(T) = f_\tau \circ Q(T)$ at $T=0$ gives $Q^{\tau}(f_{\tau}(0))=f_\tau(0)$ and so $f_\tau(0)$ is a root of $Q^{\tau}(T) = T$. 

Since $Q^\tau(0)=0$ and since $Q^\tau(T) \equiv T^d \mod \mathfrak{m}_E$, we have by the theory of Newton polygons that either $f_\tau(0)$ is of valuation $0$ or $f_\tau(0)=0$. But the first case can not occur since $\theta\circ \phi_d^{-n}(w_{\tau}) = f_\tau(w_{n+j})$ and so $f_\tau(0) \in \mathfrak{m}_{E}$. Moreover, $f_\tau'(0) \neq 0$, since if we write $f(T)=f_kT^k+O(T^{k+1})$ with $f_k \neq 0$, then since $Q^{\tau}\circ f_\tau(T)=f_\tau\circ Q(T)$, we get $\tau(Q'(0))f_k=f_kQ'(0)^k$ and so $\tau(Q'(0))=Q'(0)^k$, thus $k=1$ since $v_E(Q'(0)) > 0$. 
\end{proof}

Once again, we have a similar result as the one of Berger, and the following lemma is the analog of \cite[Coro. 5.8]{Ber14iterate}.
\begin{lemm}
\label{souscorps}
Let $G = \{\tau \in \Sigma \textrm{ such that } r_{\tau}' \geq 1 \}$. Then $G$ is a subgroup of $\Sigma$ and if $F=E^G$, then $\kappa(g) \in \mathcal{O}_F^\times$. Moreover, $r_{\tau}'$ is independent of $\tau \in G$.
\end{lemm}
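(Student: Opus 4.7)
The plan is to follow \cite[Coro. 5.8]{Ber14iterate}, pivoting on Proposition \ref{algphiit} (characterizing $\tau \in G$ by $w_\tau \in \bigcup_{j \geq 0} \phi_d^{-j}(\mathcal{O}_E[\![w]\!])$) and on Lemma \ref{ftau}. I start by noting that $\id \in G$: by construction $\theta(w) = v_0 \in E_1 \subset \Qpbar$, so the third condition of Proposition \ref{algphiit} holds for $\tau = \id$, giving $r_{\id}' \geq 1$.

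The crux of the argument is closure of $G$ under multiplication (inverses are automatic since $\Sigma$ is finite). For $\sigma, \tau \in G$, Lemma \ref{ftau} produces $f_\sigma, f_\tau \in \mathcal{O}_E[\![T]\!]$ with $w_\sigma = f_\sigma(\phi_d^{-j_\sigma}(w))$ and $w_\tau = f_\tau(\phi_d^{-j_\tau}(w))$. Applying $\sigma \otimes \phi^{n(\sigma)}$ to the expression for $w_\tau$, using that $\phi_d$ is $\mathcal{O}_E$-linear (so it commutes with the $\Sigma$-action) and that the $\mathcal{O}_E$-coefficients of $f_\tau$ are carried to $f_\tau^\sigma$, I get
\[
(\sigma \otimes \phi^{n(\sigma)})(w_\tau) \;=\; f_\tau^\sigma\bigl(f_\sigma(\phi_d^{-(j_\tau+j_\sigma)}(w))\bigr) \;\in\; \bigcup_{j \geq 0}\phi_d^{-j}(\mathcal{O}_E[\![w]\!]).
\]
Since $(\sigma \otimes \phi^{n(\sigma)}) \circ (\tau \otimes \phi^{n(\tau)}) = (\sigma\tau) \otimes \phi^{n(\sigma)+n(\tau)}$ and $w_{\sigma\tau} = ((\sigma\tau) \otimes \phi^{n(\sigma\tau)})(w)$, the two expressions differ by a power of $\phi_{q_E}$, which can be absorbed into the Frobenius offset by choosing the $n(\cdot)$'s as sufficiently large multiples of $\log_p d$. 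Proposition \ref{algphiit} then gives $\sigma\tau \in G$.

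For $\kappa(g) \in \mathcal{O}_F^\times$ with $F = E^G$: given $\tau \in G$, I differentiate the identity $H_g^\tau \circ f_\tau = f_\tau \circ H_g$ from Lemma \ref{ftau} at $T = 0$, obtaining $\tau(\kappa(g)) f_\tau'(0) = f_\tau'(0) \kappa(g)$. Since $f_\tau'(0) \neq 0$, this gives $\tau(\kappa(g)) = \kappa(g)$ for every $\tau \in G$, so $\kappa(g) \in E^G = F$, hence in $\mathcal{O}_F^\times$. Finally, the independence of $r_\tau'$ on $\tau \in G$ is formal: $\kappa$ is crystalline and now factors through $\mathcal{O}_F^\times$, so its Hodge--Tate weight $r_\tau'$ at $\tau$ depends only on $\tau|_F$; but $\tau|_F = \id|_F$ for every $\tau \in \Gal(E/F) = G$, whence $r_\tau' = r_{\id}'$.

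The main obstacle is the Frobenius bookkeeping in the closure step: the composition $(\sigma \otimes \phi^{n(\sigma)}) \circ (\tau \otimes \phi^{n(\tau)})$ differs from $(\sigma\tau) \otimes \phi^{n(\sigma\tau)}$ by a Frobenius power that need not be an integer power of $\phi_d$ when $d > q_E$, a subtlety purely due to the normalization of the $n(\cdot)$'s and resolved by enlarging them compatibly.
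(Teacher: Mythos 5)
Your argument is correct and follows the same overall route as the paper: $\id\in G$ from Proposition \ref{existw}, closure under products via the intertwiners $f_\tau$ of Lemma \ref{ftau} and criterion $(5)$ of Proposition \ref{algphiit}, and then Galois descent to $F=E^G$. The one genuine variant is the step showing $\kappa(g)\in\mathcal{O}_F^\times$: you differentiate the intertwining relation $H_g^\tau\circ f_\tau=f_\tau\circ H_g$ at $T=0$ and invoke $f_\tau'(0)\neq0$, whereas the paper first derives $L_Q^\tau\circ f_\tau = f_\tau'(0)\cdot L_Q$ from $Q^\tau\circ f_\tau=f_\tau\circ Q$ and reads off $\tau(\kappa(g))=\kappa(g)$ from the Galois action on the period $L_Q(w)$. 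Your route is more elementary and avoids the logarithm entirely; the paper's detour is not needed for this statement (the logarithm computation is, however, structurally similar to what is used elsewhere in the section).

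On the Frobenius normalization point you flag: the worry is legitimate in the sense that $n(\sigma)+n(\tau)$ need not equal $n(\sigma\tau)$, but the proposed fix of ``enlarging the $n(\cdot)$'s compatibly'' cannot be carried out globally --- a map $G\to\Z$ with $n(\sigma\tau)=n(\sigma)+n(\tau)$ would be a homomorphism from a finite group to $\Z$, hence trivial. The correct observation is simpler: the weight $r_{\sigma\tau}'$ is intrinsic to the embedding $\sigma\tau$ and does not depend on the choice of $n(\sigma\tau)$, so one may simply take $n(\sigma\tau):=n(\sigma)+n(\tau)$ for the pair at hand when invoking Proposition \ref{algphiit}; no global compatibility is required. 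The paper silently uses this convention in writing $w_{\sigma\tau}=f_\tau^\sigma\circ f_\sigma(\phi_d^{-j_\tau-j_\sigma}(w))$.
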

\begin{proof}
Proposition \ref{existw} shows that $\tau=\id$ satisfies condition $(3)$ of proposition \ref{algphiit}, so that $r_\id' \geq 1$. If $\sigma,\tau$ satisfy condition $(5)$ of proposition \ref{algphiit}, then one can write $w_\sigma = f_\sigma(\phi_d^{-j_\sigma}(w))$ and $w_\tau=f_\tau(\phi_d^{-j_\tau}(w))$, so that $w_{\sigma\tau} = f_\tau^\sigma \circ f_\sigma(\phi_d^{-j_\tau-j_\sigma}(w))$, thus $\sigma\tau$ also satisfies condition $(5)$. Since $\Sigma$ is a finite group, this shows that $G$ is a subgroup of $\Sigma$.

By lemma \ref{ftau}, $Q^{\tau}\circ f_\tau=f_\tau\circ Q(T)$, and so $(Q^{\tau})^{\circ n}\circ f_\tau=f_\tau\circ Q^{\circ n}(T)$, so that 
$$\frac{1}{Q'(0)^n}(Q^{\tau})^{\circ n}\circ f_\tau(T) = \frac{1}{Q'(0)^n}f_\tau\circ Q^{\circ n}(T),$$
which implies passing to the limit that $L_Q^{\tau}\circ f_\tau(T) = f_\tau'(0)\cdot L_Q(T)$. since we also have $H_g^{\tau}\circ f_\tau(T) = f_\tau\circ H_g(T)$, we get $g(L_Q^{\tau}\circ f_\tau(v))=\tau(\eta(g))\cdot(L_Q^{\tau}\circ f_\tau(v))$. This implies that $\tau(\kappa(g))= \kappa(g)$ since $L_Q^{\tau}\circ f_\tau(v)=f_\tau'(0)\cdot L_Q(v)$. This holds for every $\tau \in H$, and so $\kappa(g) \in \mathcal{O}_F^\times$. Since $\kappa(g) \in \mathcal{O}_F^{\times}$, its weight $r_{\tau}'$ at $\tau$ depends only on $\tau_{|F}$ and so is independent of $\tau \in H$.
\end{proof}

In order to finish the proof of theorem \ref{main theo}, we will need some local class field theory: for $\lambda$ a uniformizer of $E$, let $E_{\lambda}$ be the extension of $E$ attached to $\lambda$ by local class field theory. This extension is generated by the torsion points of a Lubin-Tate formal group defined over $E$ and attached to $\lambda$, and we write $\chi_{\lambda}^K~: \Gal(E_{\lambda}/E) \to \mathcal{O}_E^{\times}$ the corresponding Lubin-Tate character. Since $E_{\infty}/E$ is abelian and totally ramified, there exists $\lambda$ a uniformizer of $\mathcal{O}_E$ such that $E_{\infty} \subset E_{\lambda}$.

\begin{prop}
\label{expression caractere eta}
One has $\kappa = \prod_{\tau \in \Sigma}\tau(\chi_{\lambda}^E)^{r_{\tau}'}$. 
\end{prop}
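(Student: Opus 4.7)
The strategy is to invoke the classification of crystalline characters of $\G_E := \Gal(\Qpbar/E)$ up to an unramified twist, and then show that this twist must be trivial because $E_{\lambda}/E$ is totally ramified.

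First, I would observe that $\kappa$ extends uniquely through the surjection $\G_E \twoheadrightarrow \Gamma_E$ to a crystalline character of $\G_E$ with the same Hodge-Tate weights $(r_{\tau}')_{\tau \in \Sigma}$. The Lubin-Tate character $\chi_{\lambda}^E$ is itself crystalline with Hodge-Tate weight $1$ at the identity embedding and $0$ at every other embedding of $E$, so that the twist $\tau(\chi_{\lambda}^E)$ is crystalline with weight $1$ at $\tau$ and $0$ elsewhere. Consequently, the ratio
$$\mu := \kappa \cdot \prod_{\tau \in \Sigma} \tau(\chi_{\lambda}^E)^{-r_{\tau}'}$$
is a crystalline character of $\G_E$ with all Hodge-Tate weights equal to $0$; and such a character is necessarily unramified, as one checks by computing its associated filtered $\phi$-module (a one-dimensional module with trivial filtration, corresponding to an unramified character).

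Then I would restrict the identity $\kappa = \mu \cdot \prod_\tau \tau(\chi_{\lambda}^E)^{r_{\tau}'}$ to the subgroup $\Gal(\Qpbar/E_{\lambda}) \subset \G_E$. Since $\kappa$ factors through $\Gamma_E = \Gal(E_{\infty}/E)$ and $E_{\infty} \subset E_{\lambda}$, the character $\kappa$ is trivial there; and each $\tau(\chi_{\lambda}^E)$ factors through $\Gal(E_{\lambda}/E)$, so is also trivial on $\Gal(\Qpbar/E_{\lambda})$. Hence $\mu$ is trivial there too, and descends to a character of $\Gal(E_{\lambda}/E)$. Now $E_{\lambda}/E$ is totally ramified (a standard feature of Lubin-Tate extensions), so that the image of the inertia subgroup $I_E \subset \G_E$ in $\Gal(E_{\lambda}/E)$ is all of $\Gal(E_{\lambda}/E)$; combined with $\mu|_{I_E}=1$, this forces $\mu \equiv 1$ on $\Gal(E_{\lambda}/E)$, hence on $\G_E$, which gives the stated equality.

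The only genuinely delicate point is the first step, namely the classification of crystalline characters of $\G_E$ up to an unramified twist in terms of the explicit Lubin-Tate characters $\tau(\chi_{\lambda}^E)$. This is a classical consequence of $p$-adic Hodge theory combined with local class field theory; it can also be avoided by a direct comparison: check that the filtered $\phi$-module of $\kappa$ coincides with that of $\prod_\tau \tau(\chi_{\lambda}^E)^{r_{\tau}'}$ up to an unramified twist, using that the weights on both sides agree and that $E_\infty \subset E_\lambda$. Once this is granted, the rest of the argument is a clean manipulation with Galois groups and local class field theory.
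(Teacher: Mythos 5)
Your proof is correct and takes essentially the same approach as the paper, which simply cites \cite[Prop. 6.1]{Ber14iterate}: the argument there is exactly your decomposition of $\kappa \cdot \prod_{\tau \in \Sigma} \tau(\chi_{\lambda}^E)^{-r_{\tau}'}$ into a crystalline character with all Hodge--Tate weights zero, hence unramified, which factors through the totally ramified extension $E_{\lambda}/E$ and is therefore trivial.
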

\begin{proof}
This is the same proof as \cite[Prop. 6.1]{Ber14iterate}.
\end{proof}

\begin{theo}
\label{eta et sous-corps relatif}
There exists $F \subset E$ and $r \geq 1$ such that $\kappa = N_{E/F}(\chi_{\lambda}^E)^r$.
\end{theo}
\begin{proof}
Let $F$ be the field given by lemma \ref{souscorps} and let $r = r_{\tau}'$ for $\tau \in G$, which does not depend on the choice of $\tau \in G$ by the same lemma. Lemma \ref{souscorps} also shows that $G = \Gal(E/F)$, and so, combining these results with proposition \ref{expression caractere eta}, we get that $\eta=N_{E/F}(\chi_{\lambda}^E)^r$.
\end{proof}

These results and lemma \ref{classfield} allow us to prove the following:
\begin{theo}
\label{thm phigamma implique LT}
If $K$ is a finite extension of $\Qp$ and if $K_{\infty}/K$ is an infinite strictly APF Galois extension with Galois group $\Gamma_K = \Gal(K_{\infty}/K)$,  and such that there exists a finite extension $E$ of $\Qp$ such that $k_K \subset k_E$, an integer $d$ which is a power of the cardinal of $k_K$ and power series $\{F_g(T)\}_{g \in \Gamma_K}$ and $P(T)$ in $\A_K$ such that:
\begin{enumerate}
\item for all $g \in \Gamma_K$, $\overline{F}_g(T) \in k_K(\!(T)\!)$ and $\overline{P}(T) \in k_K(\!(T)\!)$;
\item $\overline{F}_g(\pi_K)=g(\pi_K)$ and $\overline{P}(\pi_K) = \pi_K^d$;
\item $F_g \circ P = P \circ F_g$ and $F_g \circ F_h = F_{hg}$ for $g,h \in \Gamma_K$;
\end{enumerate}
then there exists a finite extension $L$ of $E$, a subfield $F$ of $L$ and a relative Lubin-Tate group $S$, relative to the extension $F^{\mathrm{unr}} \cap L$ of $F$, such that if $L_{\infty}^S$ is the extension of $L$ generated by the torsion points of $S$, then $K_\infty \subset L_{\infty}^S$ and $L_\infty^S/K_\infty$ is a finite extension.
\end{theo}
\begin{proof}
Let $L$ be a finite extension of $E$ such that $L$ contains $K$, $L/\Qp$ is Galois and $L_\infty/L$ is totally ramified. Let $\lambda$ be a uniformizer of $L$ such that $L_{\infty} \subset L_{\lambda}$, let $F$ be the subfield of $L$ given by theorem \ref{eta et sous-corps relatif} and let $E'= F^{\mathrm{unr}}\cap L$ and let $\pi = N_{L/E'}(\lambda)$ and $\alpha = N_{L/F}(\lambda)$, so that $\pi$ is a uniformizer of $E'$ and $\alpha = N_{E'/F}(\pi)$. Let $S$ be the relative Lubin-Tate group attached to $\alpha$, and let $L_{\infty}^S$ be the extension of $L$ generated by the torsion points of $S$. If $g \in \Gal(\Qpbar/L_{\infty}^S)$, then $N_{L/F}(\chi_{\lambda}^L(g))=1$ by lemma \ref{classfield}, and so by theorem \ref{eta et sous-corps relatif}, we get $\kappa(g) = 1$. In particular, we have $L_{\infty} \subset L_{\infty}^S$. By theorem \ref{eta et sous-corps relatif} and some Galois theory, we get:
\begin{enumerate}
\item $L_{\infty}^S$ is the field cut out by $\{g \in \G_L~: N_{L/F}(\chi_{\lambda}^L(g))=1 \}$ ;
\item $L_{\infty}$ is the field cut out by $\{g \in \G_L~: N_{L/F}(\chi_{\lambda}^L(g))^r=1\}$.
\end{enumerate}
so that $L_{\infty}^S/L_{\infty}$ is a finite Galois extension whose Galois group is isomorphic to $\{x \in \mathcal{O}_F^{\times},x^r=1\}$. The conclusion comes from the fact that $L_\infty/K_\infty$ is finite.
\end{proof}~

\section{From Lubin-Tate extensions to $\phi$-iterate extensions}
\label{LTphiit}
In this section, we show a partial converse of theorem \ref{thm phigamma implique LT}. Let $K$ be a finite extension of $\Qp$ and let $K_\infty/K$ be the extension generated by the torsion points of a relative Lubin-Tate group, relative to a subfield $F$ of $K$. We will prove that, if $L$ is an extension of $K$, contained in $K_\infty$ and such that $K_\infty/L$ is finite and Galois, of degree prime to $p$, then $L/K$ is $\phi$-iterate up to a finite level. In particular, using theorem \ref{relevement caisdavis}, the action of $L/K$ will be liftable of finite height. To do so, we use some results from \cite{laubie2002systemes}. These results will be used in the next section.

Note however that a full converse of theorem \ref{thm phigamma implique LT} does not hold. A full converse result of theorem \ref{thm phigamma implique LT} would be that when $K/\Qp$ is a finite extension and $K_\infty/K$ is the extension generated over $K$ by the torsion points of a relative Lubin-Tate group $S$, relative to an extension $E/F$ with $E \subset K$, then any extension $L$ of $K$ such that $K_\infty/L$ is finite and Galois would be $\phi$-iterate. In particular, taking $L=K_\infty$, the action of $\Gamma_K=\Gal(K_\infty/K)$ would be liftable of finite height thanks to theorem \ref{relevement caisdavis}. But in the case where $K_\infty$ is the cyclotomic extension of $K$ and $K$ is such that $K_\infty/\Qp(\zeta_{p^\infty})$ is totally ramified, \cite[Rem. p. 393]{wach1996representations} and \cite[§ 1.1.2.2]{herr1998cohomologie} show that such a lift is not possible.

In what follows, $F$ is a finite extension of $\Qp$ and $K$ is a finite unramified extension of $F$. Let $q$ be the cardinal of $k_K$. Let $S(X,Y)$ be a formal relative Lubin-Tate group, relative to the extension $K/F$, attached to $\alpha \in \mathcal{O}_F$ and $K_\infty$ is the extension of $K$ generated by the torsion points of $S$. We note $\Gamma_K=\Gal(K_\infty/K)$, $\chi_\alpha : \Gamma_K \to \mathcal{O}_F^\times$ the relative Lubin-Tate character attached to $\alpha$. Let $L$ be an extension of $K$, contained in $K_\infty$ such that $K_\infty/L$ is finite and Galois, and let $W=\Gal(K_\infty/L)$. Let $d$ be the cardinal of $W = \Gal(K_\infty/L)$ so that $\chi_\alpha(W)$ is the set of $d$-th roots of unity. To keep the notations simple, we will stille write $W$ for its image by $\chi_\alpha$. Let $M=K(\!(T)\!)$.

For $a \in K$, let $[a] = [a](T) \in M$ be the unique endomorphism of $S$ over $K$ whose derivative at $0$ is $a$. Let $P=[\alpha]$.

The group $W$ acts on $M$ by (right) composition: $(w,Q) \mapsto w \cdot Q = Q \circ [w]$.

By Artin's theorem, $M/M^W$ is Galois with Galois group isomorphic to $W$, and we define $R:=N_{M/M^W}(T) = \prod_{w \in W}[w](T)$.
 
\begin{lemm}
\label{thm Samuel}
We have $M^W = K(\!(R)\!)$.
\end{lemm}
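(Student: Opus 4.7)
The plan is to prove both $K(\!(R)\!) \subseteq M^W$ and $[M : K(\!(R)\!)] = d = [M : M^W]$, which together force the desired equality.

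First, I would verify that $R$ is $W$-invariant. For any $w_0 \in W$, using $[w] \circ [w_0] = [w w_0]$ in $\End(S)$ and that right translation by $w_0$ permutes $W$,
\[
w_0 \cdot R \;=\; R \circ [w_0] \;=\; \prod_{w \in W} [w w_0](T) \;=\; \prod_{w \in W} [w](T) \;=\; R,
\]
so $R \in M^W$ and therefore $K(\!(R)\!) \subseteq M^W$. The action of $W$ on $M$ is faithful: if $[w](T) = T$ as a power series, then $w = [w]'(0) = 1$. Hence Artin's theorem gives that $M/M^W$ is Galois of degree $|W| = d$.

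It then remains to show $[M : K(\!(R)\!)] = d$, for the chain $K(\!(R)\!) \subseteq M^W \subseteq M$ with equal degrees at both ends forces $M^W = K(\!(R)\!)$. Since each $[w](T) \in T\cdot \mathcal{O}_K[\![T]\!]^\times$, the product $R = T^d \cdot u(T)$ for some unit $u \in K[\![T]\!]^\times$, so $R \cdot K[\![T]\!] = T^d \cdot K[\![T]\!]$. Reducing modulo $R$ yields
\[
K[\![T]\!]/R\, K[\![T]\!] \;=\; K[T]/(T^d),
\]
which is a free $K$-module of rank $d$ with basis $1, T, \ldots, T^{d-1}$. Because the $R$-adic filtration on $K[\![T]\!]$ is the $T^d$-adic one and hence cofinal with the $T$-adic filtration, $K[\![T]\!]$ is $R$-adically complete, and a standard Nakayama-type lift promotes $\{1, T, \ldots, T^{d-1}\}$ to a $K[\![R]\!]$-basis of $K[\![T]\!]$. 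Inverting $R$ (equivalently $T$) gives $[K(\!(T)\!) : K(\!(R)\!)] = d$ as required.

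The technical heart of the argument is the last free-module statement. Spanning follows by iterating the decomposition $f = \sum_{i=0}^{d-1} c_i T^i + R g$ with $c_i \in K$ and $g \in K[\![T]\!]$, so that the partial sums converge $R$-adically to an expression $\sum g_i(R) T^i$ with $g_i \in K[\![R]\!]$. Linear independence follows by reducing any putative relation $\sum g_i(R) T^i = 0$ modulo $R$ to force each $g_i(0) = 0$, then dividing by $R$ and iterating to conclude $g_i \in \bigcap_{n \geq 0} R^n K[\![R]\!] = (0)$. Combined with the Artin degree computation, this delivers $M^W = K(\!(R)\!)$.
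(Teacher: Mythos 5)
Your proof is correct, but it takes a different route from the paper. The paper simply cites Laubie's Lemma 2.1, which in turn invokes Samuel's general theorem that for a finite group $G$ of $\mathcal{O}$-automorphisms of $\mathcal{O}[\![T]\!]$ over a complete local noetherian domain $\mathcal{O}$, the invariant ring is $\mathcal{O}[\![\prod_{s\in G}s\cdot T]\!]$. You instead give a direct, self-contained argument: first you verify $R \in M^W$ (using that right translation by $w_0$ permutes $W$) and that the $W$-action on $M$ is faithful, so Artin's theorem gives $[M:M^W]=|W|=d$; then you observe that $R = T^d\cdot(\text{unit})$, so the $R$-adic and $T$-adic filtrations on $K[\![T]\!]$ agree, and a completeness/Nakayama argument shows $\{1,T,\dots,T^{d-1}\}$ is a $K[\![R]\!]$-basis of $K[\![T]\!]$, whence $[K(\!(T)\!):K(\!(R)\!)]=d$; the two degree counts then squeeze $M^W = K(\!(R)\!)$. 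The paper's route is shorter but black-boxes a nontrivial theorem; yours is longer but elementary — essentially a Weierstrass-preparation degree count combined with Artin's theorem — and has the virtue of making transparent why $d$ needs no coprimality to $p$ at this stage (one only uses that each $[w]'(0)=w$ is a unit). Both are valid; the degree-squeeze structure in your argument is a clean way to avoid having to exhibit a priori that every $W$-invariant Laurent series is a series in $R$.
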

\begin{proof}
This is lemma 2.1 of \cite{laubie2002systemes} and it is a straightforward consequence of a more general theorem of Samuel \cite{samuel1966groupes}, which says that if $\mathcal{O}$ is a local noetherian complete domain, and if $G$ is a finite group of $\mathcal{O}$-automorphisms of $\mathcal{O}[\![T]\!]$, then the ring $\mathcal{O}[\![T]\!]^G$ of $G$-invariants is the subring $\mathcal{O}[\![f(T)]\!]$, where $f = \prod_{s \in G}s\cdot T$.
\end{proof}

\begin{prop}
\label{def séries gammapi}
For all $a$ in $K$, there exists a unique power series $\Gamma_a$ such that
$$\Gamma_a \circ R = R \circ [a] = N_{M/M^W}([a])$$
We then have the relation $\Gamma_{a} \circ \Gamma_b = \Gamma_{\mathrm{ab}}$ and $\Gamma_a$ is for $a \in \mathcal{O}_K$ a power series whose derivative at $0$ is $a^d$.
\end{prop}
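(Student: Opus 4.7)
The plan is to produce $\Gamma_a$ via the invariant-ring description of lemma \ref{thm Samuel}, then identify its leading coefficient through a direct $T$-adic expansion, and finally obtain the multiplicativity from uniqueness.

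First, I would observe that $R\circ [a] = \prod_{w\in W}[w]\circ[a] = \prod_{w\in W}[wa]$, using that $\mathrm{End}(S)$ is commutative, so $[w]\circ[a]=[wa]$. For every $w'\in W$, the right action gives
\[w'\cdot(R\circ[a]) \;=\; (R\circ[a])\circ[w'] \;=\; \prod_{w\in W}[waw'],\]
and since $w\mapsto ww'$ is a bijection of $W$, this equals $R\circ[a]$. Hence $R\circ[a]\in M^W=K(\!(R)\!)$ by lemma \ref{thm Samuel}, so there is a unique $\Gamma_a(X)\in K(\!(X)\!)$ with $\Gamma_a(R)=R\circ[a]$; uniqueness follows from the fact that $R$ has positive $T$-valuation and is therefore transcendental over $K$, so substitution $X\mapsto R$ is injective on $K(\!(X)\!)$.

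Next, I would compute leading $T$-adic terms in order to pin $\Gamma_a$ down. Each $[w](T) = wT+O(T^2)$ with $w\in\mathcal{O}_F^\times$, so $R(T)=cT^d+O(T^{d+1})$ with $c:=\prod_{w\in W}w\in\mathcal{O}_F^\times$, and hence $R\circ[a](T) = ca^dT^d+O(T^{d+1})$ for $a\in\mathcal{O}_K$. Writing $\Gamma_a(X)=\sum_{i}\gamma_iX^i$ and substituting $X=R$, matching orders of vanishing in $T$ forces $\gamma_i=0$ for $i\leq 0$ and $\gamma_1c = ca^d$. This both confirms that $\Gamma_a$ actually lies in $X\cdot K[\![X]\!]$ (not merely $K(\!(X)\!)$) and shows $\Gamma_a'(0)=a^d$.

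Finally, the composition relation $\Gamma_a\circ\Gamma_b=\Gamma_{ab}$ follows by the uniqueness from the previous steps and the computation
\[(\Gamma_a\circ\Gamma_b)(R) \;=\; \Gamma_a(R\circ[b]) \;=\; \Gamma_a(R)\circ[b] \;=\; (R\circ[a])\circ[b] \;=\; R\circ[ab],\]
using associativity of composition and $[a]\circ[b]=[ab]$ in $\mathrm{End}(S)$. No real obstacle is expected; the only point requiring care is the upgrade from $\Gamma_a\in K(\!(X)\!)$ to $\Gamma_a\in K[\![X]\!]$, which is exactly what the leading-term analysis provides.
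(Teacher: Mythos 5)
Your proof is correct and takes essentially the same route as the paper: reduce to Samuel's invariant-ring description (lemma \ref{thm Samuel}) via $W$-invariance of $R\circ[a]$, then read off the leading coefficient and deduce multiplicativity from uniqueness. The paper's own proof is terser (it omits the explicit $W$-invariance check and the leading-term computation for $\Gamma_a'(0)=a^d$), so your version simply fills in those details.
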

\begin{proof}
The fact that, for all $a \in K$, there exists a unique power series $\Gamma_a$ such that 
$$\Gamma_a \circ R = R \circ [a] = N_{M/M^W}([a])$$
is a direct consequence of lemma \ref{thm Samuel}. The relation $\Gamma_{a} \circ \Gamma_b = \Gamma_{\mathrm{ab}}$ follows from the fact that $[a] \circ [b] = [ab]$.
\end{proof}

The power series $\Gamma_k$ for $k \in \mathcal{O}_K^{\times}$ are entirely determined by their derivative at $0$:
\begin{lemm}
\label{Gamma dérivée en 0}
Let $k,\ell \in \mathcal{O}_K^{\times}$. We have $\Gamma_k = \Gamma_{\ell}$ if and only if $\Gamma_k'(0) = \Gamma_{\ell}'(0)$.
\end{lemm}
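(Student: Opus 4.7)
The forward implication is immediate: if the power series are equal then so are their derivatives at $0$. The content is in the converse, and my plan is to reduce it to \cite[Prop. 1.1]{lubin1994nonarchimedean} applied to the series $\Gamma_\alpha$.

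First I would verify that both $\Gamma_k$ and $\Gamma_\ell$ belong to $T\cdot\mathcal{O}_K[\![T]\!]$. Constant term $0$ is automatic: evaluating $\Gamma_a\circ R = R\circ[a]$ at $T=0$ and using $R(0)=0$, $[a](0)=0$, we get $\Gamma_a(0)=R(0)=0$. Integrality of the coefficients follows from Samuel's theorem as stated in the proof of lemma \ref{thm Samuel}, applied to the complete local ring $\mathcal{O}_K[\![T]\!]$: the invariant ring $\mathcal{O}_K[\![T]\!]^W$ equals $\mathcal{O}_K[\![R]\!]$. Since $N_{M/M^W}([a])=\prod_{w\in W}[a]\circ[w]$ is a product of elements of $\mathcal{O}_K[\![T]\!]$, it lies in $\mathcal{O}_K[\![T]\!]^W=\mathcal{O}_K[\![R]\!]$, so the unique series $\Gamma_a$ such that $\Gamma_a(R)=R\circ[a]$ has coefficients in $\mathcal{O}_K$.

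Next I would observe that $\Gamma_k$ and $\Gamma_\ell$ both commute with $\Gamma_\alpha$. Indeed, by the multiplicativity relation of proposition \ref{def séries gammapi} and the commutativity of $K$,
\[
\Gamma_k\circ\Gamma_\alpha=\Gamma_{k\alpha}=\Gamma_{\alpha k}=\Gamma_\alpha\circ\Gamma_k,
\]
and similarly for $\ell$. Moreover $\Gamma_\alpha\in T\cdot\mathcal{O}_K[\![T]\!]$ by the previous step, and its derivative at $0$ equals $\alpha^d$ by proposition \ref{def séries gammapi}. Since $v_F(\alpha)=h\geq 1$, we have $\alpha^d\in\mathfrak{m}_F\setminus\{0\}\subset\mathfrak{m}_K\setminus\{0\}$, so $\Gamma_\alpha'(0)\in\mathfrak{m}_K\setminus\{0\}$.

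With these two ingredients in place, the hypotheses of \cite[Prop. 1.1]{lubin1994nonarchimedean} are satisfied for the non-invertible series $\Gamma_\alpha$: any element of $T\cdot\mathcal{O}_K[\![T]\!]$ commuting with $\Gamma_\alpha$ is determined by its derivative at $0$. Applying this to $\Gamma_k$ and $\Gamma_\ell$ gives $\Gamma_k=\Gamma_\ell$ as soon as $\Gamma_k'(0)=\Gamma_\ell'(0)$. The only real step where care is needed is verifying integrality via Samuel; everything else is a direct invocation of earlier results.
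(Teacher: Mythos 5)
Your proof is correct, and it takes a genuinely different route from the paper's. The paper proves the nontrivial direction via Lubin's logarithm $L_P$ attached to $P=[\alpha]$: it computes $L_P(\Gamma_k\circ R)=L_P(R\circ[k])=(-1)^{d-1}k^d\cdot L_P$ and reads off that $\Gamma_k$ is determined by $k^d=\Gamma_k'(0)$. You instead invoke the rigidity statement \cite[Prop.~1.1]{lubin1994nonarchimedean} (a power series in $T\cdot\mathcal{O}_K[\![T]\!]$ commuting with a fixed stable noninvertible series is determined by its derivative at $0$), with $\Gamma_\alpha$ playing the role of the noninvertible series. To make that invocation legitimate you correctly check the three needed facts: $\Gamma_k,\Gamma_\ell,\Gamma_\alpha$ lie in $T\cdot\mathcal{O}_K[\![T]\!]$ (integrality from Samuel's theorem as in lemma \ref{thm Samuel}, vanishing at $0$ by evaluating $\Gamma_a\circ R=R\circ[a]$ at $T=0$); $\Gamma_k,\Gamma_\ell$ commute with $\Gamma_\alpha$ (multiplicativity $\Gamma_a\circ\Gamma_b=\Gamma_{ab}$ from proposition \ref{def séries gammapi} together with commutativity of $\mathcal{O}_K$); and $\Gamma_\alpha'(0)=\alpha^d\in\mathfrak{m}_K\setminus\{0\}$ (since $v_F(\alpha)=h\geq1$). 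This is precisely the style of argument the paper itself uses for the injectivity statements in corollaries \ref{injectif} and \ref{kappa injectif}, so your route makes the section more uniform; what it costs is the explicit verification of integrality and commutation, whereas the paper's logarithm computation is shorter on the page but somewhat compressed, as the step from $L_P\bigl(\prod_{w\in W}[wk]\bigr)$ to $(-1)^{d-1}k^d\cdot L_P$ implicitly uses the relation between $L_P$ and the logarithm of the condensed series $\Gamma_\alpha$.
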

\begin{proof}
Let $L_P$ be Lubin's logarithm attached to $P$ as in proposition \ref{loglubin}. Then 
$$L_P(\Gamma_k \circ R) = L_P(R \circ [k]) = L_P\left(\prod_{w \in W}[wk]\right) = (-1)^{d-1}k^d\cdot L_P$$
by item $2$ of proposition \ref{loglubin}. In particular, $\Gamma_k = \Gamma_{\ell}$ if and only if $\ell^d=k^d$, which are the derivatives at $0$ of $\Gamma_\ell$ and $\Gamma_k$ by proposition \ref{def séries gammapi}.
\end{proof}

We will now consider the power series $\Gamma_{\alpha}$ and use it to construct a $\phi$-iterate extension. To do so, we will need the following lemma:

\begin{lemm}
\label{Gammapi phiit}
The power series $\Gamma_{\alpha}$ satisfies the following:
\begin{enumerate}
\item $\Gamma_{\alpha}(0) = 0$ and $\Gamma_{\alpha}'(0) = \alpha^d$ ;
\item $\Gamma_{\alpha}(T) \equiv T^q \mod \mathfrak{m}_K$.
\end{enumerate}
\end{lemm}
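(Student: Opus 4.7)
My plan is to derive both items from the defining relation $\Gamma_\alpha \circ R = R \circ [\alpha]$. For item~(1), evaluating at $T = 0$ yields $\Gamma_\alpha(0) = R(0) = 0$, since $[\alpha](0) = 0$ and $R(0) = \prod_{w \in W}[w](0) = 0$. The formula $\Gamma_\alpha'(0) = \alpha^d$ is the last assertion of proposition~\ref{def séries gammapi} specialised to $a = \alpha$, and can also be read off by comparing coefficients of $T^d$: $R(T) = cT^d + O(T^{d+1})$ with $c = \prod_{w \in W}w \in \mathcal{O}_K^\times$ gives $R \circ [\alpha](T) = c\alpha^d T^d + O(T^{d+1})$ while $\Gamma_\alpha \circ R(T) = \Gamma_\alpha'(0) \cdot cT^d + O(T^{d+1})$.

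For item~(2), the crucial input is the congruence $[\alpha](T) \equiv T^q \pmod{\mathfrak{m}_K}$. Writing $q_F := |k_F|$, $h := [K:F]$ (so that $q = q_F^h$), $\phi := \phi_{q_F}$ and picking $f \in \mathcal{F}_\alpha^r$, the $h$-fold twisted composition $g := f^{\phi^{h-1}} \circ \cdots \circ f^\phi \circ f$ is an endomorphism of $S$ (by repeated application of $f^{\phi^i} \circ S^{\phi^i} = S^{\phi^{i+1}} \circ f^{\phi^i}$ together with $\phi^h = \mathrm{id}$ on $K$), whose derivative at $0$ equals $N_{K/F}(\pi) = \alpha$, so $g = [\alpha]$; since each factor is $\equiv T^{q_F} \pmod{\mathfrak{m}_K}$ by the definition of $\mathcal{F}_\alpha^r$, the composition is $\equiv T^{q_F^h} = T^q \pmod{\mathfrak{m}_K}$.

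Granting this, the reduction of $\Gamma_\alpha \circ R = R \circ [\alpha]$ modulo $\mathfrak{m}_K$ becomes $\overline{\Gamma_\alpha}(\overline{R}(T)) = \overline{R}(T^q) = \overline{R}(T)^q$ in $k_K[\![T]\!]$, where the last equality uses that $\overline{R} \in k_K[\![T]\!]$ and that $x \mapsto x^q$ is the identity on $k_K$. Since $\overline{R}(T) = \bar c T^d + O(T^{d+1})$ with $\bar c \in k_K^\times$, substitution by $\overline{R}$ gives an injective ring homomorphism $k_K[\![X]\!] \to k_K[\![T]\!]$ (a nonzero $B = \sum b_i X^i$ of lowest nonzero index $j_0$ produces the nonzero lowest term $b_{j_0}\bar c^{j_0}T^{j_0 d}$). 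Both $\overline{\Gamma_\alpha}(X)$ and $X^q$ map to $\overline{R}(T)^q$, so they agree, giving item~(2). The real obstacle is the congruence for $[\alpha]$ modulo $\mathfrak{m}_K$, which relies on the explicit structure of relative Lubin--Tate groups recalled in Section~\ref{LTrelatif}; once it is in hand, everything else is formal power-series bookkeeping.
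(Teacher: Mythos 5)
Your proof is correct and follows the paper's own route for item (2): reduce the defining relation $\Gamma_\alpha \circ R = R \circ [\alpha]$ modulo $\mathfrak{m}_K$, use $\overline{[\alpha]}(T) = T^q$ and $\overline{R}(T^q) = \overline{R}(T)^q$, and cancel $\overline{R}$. You take extra care to justify two steps the paper leaves implicit — the congruence $[\alpha](T) \equiv T^q \bmod \mathfrak{m}_K$, which you derive from the $h$-fold twisted composition $f^{\phi^{h-1}} \circ \cdots \circ f$ of an $f \in \mathcal{F}_\alpha^r$, and the injectivity of substitution by $\overline{R}$ in $k_K[\![T]\!]$ — and these are genuine gaps worth filling.
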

\begin{proof}
The first item has already been proven in proposition \ref{def séries gammapi}. In order to prove the second item, we use the relation $\Gamma_{\alpha} \circ R = R \circ [\alpha](T)$, which gives us by reducing mod $\pi$ : $\Gamma_{\alpha}(R) = R(T^q) \mod \pi$ and so $\Gamma_{\alpha}(R) = R(T)^q \mod \pi$. This implies that $\Gamma_{\alpha} = T^q \mod \pi$.
\end{proof}

\begin{theo}
\label{engendrée LT est phiit}
Let $K$ be a finite extension of $\Qp$ and let $K_\infty/K$ be the extension generated by the torsion points of a relative Lubin-Tate group, relative to a subfield $F$ of $K$. Let $L$ be an extension of $K$, contained in $K_\infty$ such that $K_\infty/L$ is finite and Galois, of degree prime to $p$. Then there exists a finite extension $E$ of $K$ and a $\phi$-iterate extension $E_\infty/E$ such that $L=E_\infty$. Moreover, we can write $E_{\infty} = \bigcup E(u_n)$ and there exists a family $\{F_g(T)\}_{g \in \Gal(E_{\infty}/E)}$ of power series such that $g \in \Gal(E_{\infty}/E)$ acts on the $u_n$ by $g(u_n) = F_g(u_n)$ with $F_g(0)=0$, and $g \mapsto F_g'(0)$ defines a character $\eta : \Gal(E_{\infty}/E) \to \mathcal{O}_F^{\times}$ satisfying $\eta(g) = \chi_{\alpha}(g)^d$.
\end{theo}
\begin{proof}
Let $x_0 \in \mathfrak{m}_{\overline{K}}$, $x_0 \neq 0$, such that $[\alpha](x_0) = 0$ and let $u_0 = S(x_0)$. Since $\Gamma_{\alpha} \circ R = R \circ [\alpha]$, we get $\Gamma_{\alpha}(u_0) = 0$. 

We will now prove that $u_0$ is a uniformizer of $E:= K(u_0)$. The discussion following \cite[Prop. 6.4]{laubie2002systemes} (it concerns the classical Lubin-Tate case but its generalization to relative Lubin-Tate groups is straightforward), for $d$ prime to $p$, shows that $u_0 = R(x_0)$ is a root of multiplicity $d$ of $\Gamma_{\alpha}$ and that the other roots of $\Gamma_\alpha$ other than $0$ are also of multiplicity $d$. In particular, we have that $[K(u_0):K] \leq \frac{q-1}{d}$. Since $R(x_0) = u_0$ and one can write $R(T)$ as $R(T)=(-1)^{d-1}T^d(1+R_1(T))$ with $R_1$ invertible in $\mathcal{O}_K[\![T]\!]$ (see the proof of \cite[Prop. 2.6]{laubie2002systemes}). This implies that $[K(x_0):K(u_0)] \leq d$. Since $x_0$ generates the first level of the relative Lubin-Tate extension $K_1$ over $K$, we have $[K(x_0):K] = q-1$. It follows that the previous inequalities on the degree of the extensions are equalities, and so $[K(x_0):K(u_0)] = d$. Moreover, since $K(x_0)/K$ is totally ramified, so is $K(x_0)/K(u_0)$. Since $R(T)=(-1)^{d-1}T^d(1+R_1(T))$ with $R_1$ invertible in $\mathcal{O}_K[\![T]\!]$, we get that $v_{K(x_0)}(u_0)=d$ and so $u_0$ is a uniformizer of $K(u_0)$.

Let us now define a sequence $(x_n)$ such that $[\alpha](x_{n+1})=x_n$, and let $u_n=R(x_n)$. Then 
$$\Gamma_{\alpha}(u_n) = \Gamma_{\alpha} \circ R(x_n) = R \circ [\alpha](x_n) = R(x_{n-1}) = u_{n-1}.$$

Since the power series $\Gamma_\alpha$ satisfies conditions of definition \ref{defphiit} and since $u_0$ is a uniformizer of $L$, the extension $L_\infty:=\bigcup L_n$ where $L_n=L(u_n)$ is $\phi$-iterate. Since $u_n=R(x_n)$ we have $E_\infty \subset K_\infty$ and since $K_\infty$ is an abelian extension of $K$, so is $E_\infty$. 

Moreover, $\Gamma_k(u_n) = \Gamma_k \circ R(x_n) = R \circ [k](x_n) = R(g(x_n))$ if $g \in \Gal(K_{\infty}/K)$ is such that $\chi_{\alpha}(g) = k$ and $R(g(x_n)) = g(R(x_n)) = g(u_n)$, and so $\Gamma_k(u_n) = g(u_n)$ for $\chi_\alpha(g) = k$. Now let $\eta$ be the character $\Gal(K_\infty/K) \to \mathcal{O}_F^\times$, given by $g \mapsto \Gamma_{\chi_{\alpha}(g)}'(0)=\chi_{\alpha}(g)^k$. Since $\Gal(K_{\infty}/K) \simeq \mathcal{O}_F^{\times}$ via $g \mapsto \chi_{\alpha}(g)$, this character is surjective onto $\left\{x^d~: x \in \mathcal{O}_K^{\times}\right\}$, which is a subgroup of $\mathcal{O}_F^\times$ with finite index $d$. If $g \in \Gal(K_\infty/K)$ is such that $\eta(g)=1$, then $\chi_\alpha(g)^d=1$ and so $\Gamma_{\chi_\alpha(g)}(T)=T$ since the power series $\Gamma_k$ are determined by their derivative at $0$ by lemma \ref{Gamma dérivée en 0}. In particular, $\Gamma_{\chi_{\alpha}(g)}(u_n) = u_n$ for all $n$ and so $g \in \Gal(K_{\infty}/E_{\infty})$. If $g \in \Gal(K_\infty/E_\infty)$, then for all $n$, $g(u_n) = u_n$ since $E_\infty$ is generated by the $u_n$ over $K$. But $g(u_n) = g(R(x_n))=R(g(x_n))=\Gamma_{\chi_\alpha(g)}(u_n)$ and so $\Gamma_{\chi_\alpha(g)}(T)-T$ is a power series with coefficients in $\mathcal{O}_K[\![T]\!]$ which vanishes on an infinite subset of the unit disk of $\Cp$, and so $\Gamma_{\chi_\alpha(g)}(T)=T$ and $\eta(g)=1$.
\end{proof}

In particular, we deduce the following result which shows that in a general setting there exist extensions $K_\infty/K$ for which the lift of $\Gamma_K$ is of finite height and for which there exists extensions $L_\infty/K_\infty$ that are not unramified but such that the action of $\Gamma_L$ is liftable of finite height:
\begin{coro}
There exists a finite extension $K$ of $\Qp$ and a $\phi$-iterate extension $K_\infty/K$ and a finite extension $E$ of $K$ such that, if $u$ denotes the image of $T$ by the embedding of proposition \ref{uembedding} for $\A_K$ and if $v$ denotes the image of $T$ by the embedding of proposition \ref{uembedding} for $\A_E$, then $\mathcal{O}_E[\![u]\!] \neq \mathcal{O}_E[\![v]\!]$.
\end{coro}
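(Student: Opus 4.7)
The plan is to exhibit an explicit example built on the cyclotomic $\Zp$-extension. Fix an odd prime $p$, take $K=\Qp$ and let $K_\infty$ be the cyclotomic $\Zp$-extension of $\Qp$, i.e.\ the subfield of $\Qp(\mu_{p^\infty})$ fixed by the Teichmüller subgroup $\mu_{p-1}(\Zp)\subset\Zp^\times\simeq\Gal(\Qp(\mu_{p^\infty})/\Qp)$. The extension $\Qp(\mu_{p^\infty})/\Qp$ is the relative Lubin-Tate extension attached to $\alpha=p$ on $\widehat{\bf G}_m$ (with $[\alpha](T)=(1+T)^p-1$), and $\Gal(\Qp(\mu_{p^\infty})/K_\infty)$ is cyclic of order $p-1$, prime to $p$. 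Theorem \ref{engendrée LT est phiit} therefore endows $K_\infty$ with the structure of a $\phi$-iterate extension over $\Qp$, with iteration power series $P(T)=\Gamma_p(T)$ defined by $\Gamma_p\circ R=R\circ [p]$, where
$$R(T)=\prod_{\omega\in\mu_{p-1}(\Zp)}\bigl((1+T)^\omega-1\bigr);$$
the computation $R(\zeta_p-1)=\prod_\omega(\zeta_p^\omega-1)=\pm p$ confirms that the base field produced by the theorem is $\Qp$ itself. I then take $E=\Qp(\zeta_p)$, a finite extension of $K$: since $E\cdot K_\infty=\Qp(\mu_{p^\infty})$, the ring $\A_E$ is the standard cyclotomic Cohen ring and, under Proposition \ref{uembedding}, one can arrange that $v$ maps to the classical cyclotomic period $[\epsilon]-1\in\Atplus$.

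The core of the argument is then the identification $u=R(v)$ in $\Atplus$. I would set $w:=R([\epsilon]-1)\in\Atplus$ and check directly that
$$\phi_p(w)=R\bigl([\epsilon]^p-1\bigr)=R\circ[p]([\epsilon]-1)=\Gamma_p(w)=P(w),$$
and that the reduction of $w$ in $\Etplus$ has $n$-th component $R(\zeta_{p^{n+1}}-1)=\prod_\omega(\zeta_{p^{n+1}}^\omega-1)$, which is exactly the sequence of uniformizers $(u_n)$ of the $\phi$-iterate tower constructed in the proof of Theorem \ref{engendrée LT est phiit}. Since the embedding of Proposition \ref{uembedding} is pinned down, once a lift of a uniformizer of $\E_K$ is chosen, by its compatibility with $\phi_p$ and the $\G_K$-action, these two checks force $u=w=R(v)$ inside $\A_E\subset\At$.

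To finish, recall from the proof of Theorem \ref{engendrée LT est phiit} that $R(T)=(-1)^{p-2}T^{p-1}(1+R_1(T))$ with $R_1\in T\cdot\Zp[\![T]\!]$, so that $R(T)\in T^{p-1}\cdot\Zp[\![T]\!]^\times$ and hence $u=R(v)\in v^{p-1}\cdot\mathcal{O}_E[\![v]\!]^\times$. The ring $\mathcal{O}_E[\![u]\!]$ therefore sits inside $\mathcal{O}_E[\![v]\!]$ as the closed subring whose nonzero elements all have $v$-adic valuation divisible by $p-1\geq 2$; in particular $v\notin\mathcal{O}_E[\![u]\!]$, giving $\mathcal{O}_E[\![u]\!]\subsetneq\mathcal{O}_E[\![v]\!]$.

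The main obstacle in this plan is the identification $u=R(v)$ in the second step: it relies on the uniqueness of the embedding of Proposition \ref{uembedding} once a compatible lift of a uniformizer of $X_K(K_\infty)$ is fixed, combined with a careful match between the Cais–Davis $\phi$-iterate tower $(u_n)$ appearing in the proof of Theorem \ref{engendrée LT est phiit} and the norm construction $R([\epsilon]-1)$. Once this identification is in hand, the remaining conclusion is a short manipulation with the low-order terms of $R(T)$.
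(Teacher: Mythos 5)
Your proof is correct, but it takes a more computational route than the paper's. The paper's argument (stated for a general relative Lubin--Tate extension $E_\infty$ of $K$ with $K_\infty := E_\infty^W$ for a finite subgroup $W\subset\mathcal{O}_K^\times$ of order prime to $p$) is a one-line Galois-invariance observation: $u\in\A_K^+\subset\At_K=\At^{H_K}$ is fixed by $W\simeq H_K/H_{E}$, whereas $v$ is not, because $\overline{v}$ is a uniformizer of $\E_E\supsetneq\E_K$; so the two rings cannot be equal. You instead specialize to the cyclotomic case, prove the explicit identity $u=R(v)$ with $R(T)=\prod_\omega((1+T)^\omega-1)$, and read off the conclusion from $R$ having Weierstrass degree $p-1$. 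Both routes work. The one genuinely delicate point you correctly flag---forcing $u=R(v)$ from the reduction and $\phi$-compatibility---is not needed and can be sidestepped in two ways: either simply \emph{take} $u:=R(v)$ as the lift of a uniformizer of $\E_K$ when invoking Proposition \ref{uembedding} (the verifications $\phi_p(R(v))=\Gamma_p(R(v))$, $g(R(v))=\Gamma_{\chi(g)}(R(v))$ and $\overline{R(v)}\in\E_K$ show this is a legitimate choice, so no uniqueness statement is required); or, for an arbitrary choice of $u$, note that $u\in\A_K^+\subset\A_E^+=\mathcal{O}_E[\![v]\!]$ so $u=f(v)$ with $f\in\mathcal{O}_E[\![T]\!]$, and since $\E_E/\E_K$ is totally ramified of degree $p-1$ the reduction $\overline{f}$ must have Weierstrass degree $p-1>1$, whence $\overline{v}\notin k_E[\![\overline{u}]\!]$ and $v\notin\mathcal{O}_E[\![u]\!]$. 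Your computation that $R(\zeta_p-1)=p$, which is what shows the base of the $\phi$-iterate tower really is $\Qp$, is a nice bonus, but the corollary only asks for some finite extension, so even that degree of explicitness is optional.
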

\begin{proof}
Let $K/\Qp$ be a finite extension. Let $E_\infty/K$ be a relative Lubin-Tate extension attached to a relative Lubin-Tate group, relative to an extension $K/F$ where $K/F$ is unramified, and let $W$ be a finite subgroup of $\mathcal{O}_K^\times$ with cardinal prime to $p$ (for example, one can take the set of $(p-1)$-nth roots of $1$). Let $K_\infty/K$ be the invariants under $W$ of $E_\infty$. Then there exists a finite extension $K'$ of $K$, contained in $K_\infty$ such that $K_\infty/K'$ is $\phi$-iterate. Moreover, $E_\infty/E_1$ is $\phi$-iterate. Let $v$ be the element of $\Atplus$ which is the image of $T$ by the embedding given by proposition \ref{uembedding} for the extension $E_\infty$ and let $u$ be the element of $\Atplus$ which is the image of $T$ by the embedding given by proposition \ref{uembedding} for the extension $K_\infty$. Then the rings $\mathcal{O}_E[\![u]\!]$ and $\mathcal{O}_E[\![v]\!]$ can not be equal since $u$ is invariant under $W$ but $v$ is not.
\end{proof}

\section{Formal groups, semi-conjugation and condensation}
In this section, we show that the power series $F_g$ and $P$ given in the case of a finite height lift of the field of norms are related to endomorphisms of a formal group. 

Let us first recall the setting of non archimedean dynamical systems as studied in \cite{lubin1994nonarchimedean}. We say that a power series $g(T) \in T\cdot\mathcal{O}_K[\![T]\!]$ is invertible if $g'(0) \in \mathcal{O}_K^\times$ and noninvertible if $g'(0) \in \mathfrak{m}_K$. We say that $g$ is stable if $g'(0)$ is neither $0$ nor a root of $1$. The case studied by Lubin is the case where two stable power series $f$ and $u$ commute, with $f$ noninvertible and $u$ invertible. Following Lubin's statement that ``experimental evidence seems to suggest that for an invertible series to commute with a noninvertible series, there must be a formal group somehow in the background'', it seems reasonable to expect that, given such power series $f$ and $u$ (at least when $f$ is non zero modulo $\mathfrak{m}_K$), there exists a formal group $S$, a nonzero power series $h$ and two endomorphisms $f_S$ and $u_S$ of $S$ such that $f \circ h = h \circ f_S$ and $u \circ h = h \circ u_S$. Following the definitions of \cite{Li97}, we say in that case that $f$ and $f_S$, and $u$ and $u_S$, are semi-conjugate and that $h$ is an isogeny from $f_S$ to $f$.

One of the main examples of this situation is the condensation in the sense of Lubin (see \cite[p. 
144]{lubin1994nonarchimedean}), of which the construction of the power series $\Gamma_a$ in the previous section is a particular case. 

We will see how the power series $F_g$ given in the case of a finite height lift of the field of norms can be seen as semi-conjugates of endomorphisms of a relative Lubin-Tate group. But first, we will prove our theorem \ref{theo LT ext unramified}. Let $K_\infty/K$ be a relative Lubin-Tate extension, relative to a subfield $F$ of $K$ and attached to $\alpha \in \mathcal{O}_F$. Let $u_0 = 0$ and let $(u_n)_{n \in \N}$ be a compatible sequence of roots of iterates of $[\alpha]$, that is such that $[\alpha](u_{n+1})=u_n$ and $u_1 \neq 0$. Let $\overline{u} = (u_0,\cdots) \in \Etplus$. By §9.2 of \cite{Col02}, there exists $u \in \Atplus$, whose image in $\Etplus$ is $\overline{u}$, and such that $\phi_q(u) = [\alpha](u)$ and $g(u) = [\chi_\alpha(g)](u)$ for $g \in \G_K$. Let $\A_K$ be the $p$-adic completion of $\mathcal{O}_K[\![u]\!][1/u]$ inside $\At$. This element $u$ is exactly the image of $T$ by the embedding of proposition \ref{uembedding}, where $P(T)=[\alpha](T)$ and $F_g(T) = [\chi_\alpha(g)](T)$. Note that we have $\A_K^+=\mathcal{O}_K[\![u]\!]$. Now let $E$ be a finite extension of $K$, and let $E_\infty= E \cdot K_\infty$. By lemma \ref{lemm existunique extension étale cohen}, there exists a unique étale extension of $\varpi_K$-Cohen rings $\A_E$ of $\A_K$ lifting the extension $\E_E/\E_K$. Let $v \in \Atplus$ be such that $\A_E^+ = \mathcal{O}_K[\![v]\!]$. Since $u$ belongs to $\A_K^+ \subset \A_E^+$, we can write $u = h(v)$ for some $h \in \mathcal{O}_K[\![T]\!]$. In particular, one has $\theta \circ \phi_q^{-n}(u) = h(\theta \circ \phi_q^{-n}(v))$, so that the elements $v_n = \theta \circ \phi_q^{-n}(v)$ belong to $\Qpbar$. Let $E_n = E(v_n)$, and let $w \in \mathcal{O}_E \otimes W(\Etplus)$ be such that $\mathcal{O}_E[\![w]\!] = \{ x \in \mathcal{O}_E \otimes W(\Etplus) , \theta \circ \phi_q^{-n}(x) \in \mathcal{O}_{E_n} \textrm{ for all } n \geq 1 \}$. Such a $w$ exists since $v$ belongs to that set, and we have $\mathcal{O}_E[\![w]\!] \supset \mathcal{O}_E[\![v]\!] \supset \mathcal{O}_E[\![u]\!]$. In particular, we can write $u=f(w)$ with $f \in \mathcal{O}_E[\![T]\!]$. By lemma \ref{ajuster w} and lemma \ref{lemm H_g(w)}, one can assume that $g(w) = H_g(w)$ with $H_g(T) \in T\cdot \mathcal{O}_E[\![T]\!]$ and that $\phi_q(w) = Q(w)$ with $Q \in T\cdot \mathcal{O}_E[\![T]\!]$. Moreover, by lemma \ref{kappa injectif}, $\kappa(g)=H_g'(0)$ defines an injective character: $\Gamma_E \rightarrow \mathcal{O}_E^\times$, where $\Gamma_E=\Gal(E_\infty/E)$.

\begin{lemm}
One has $f(0)=0, f'(0)\neq 0$ and $\kappa = \chi_\alpha$. 
\end{lemm}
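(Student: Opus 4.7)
The plan is to derive two functional equations for $f$ from the hypotheses, and then to read off information first at $T=0$ and next at leading order.

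First, because $f\in\mathcal{O}_E[\![T]\!]$ with $\mathcal{O}_E$ fixed by $\phi_q$ and by every $g\in\Gamma_E$, both $\phi_q$ and $g$ commute with evaluation of $f$ on elements of $\Atplus$. Combining $u=f(w)$ with $\phi_q(u)=[\alpha](u)$, $\phi_q(w)=Q(w)$, $g(u)=[\chi_\alpha(g)](u)$ and $g(w)=H_g(w)$ yields $f(Q(w))=[\alpha](f(w))$ and $f(H_g(w))=[\chi_\alpha(g)](f(w))$. Since $\overline w\in\Etplus$ has positive valuation, the $\mathcal{O}_E$-algebra map $\mathcal{O}_E[\![T]\!]\to\Atplus$ sending $T$ to $w$ is injective, so these identities lift to
\[
f\circ Q=[\alpha]\circ f,\qquad f\circ H_g=[\chi_\alpha(g)]\circ f
\]
as equalities of power series in $\mathcal{O}_E[\![T]\!]$.

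For $f(0)=0$, I evaluate the first equation at $T=0$, using $Q(0)=0$: this gives $f(0)=[\alpha](f(0))$, so $f(0)$ is a root of $[\alpha](T)-T$. Since $[\alpha](T)\equiv T^q\pmod{\mathfrak{m}_F}$, the Newton polygon of $[\alpha](T)-T\equiv T^q-T\pmod{\mathfrak{m}_F}$ shows that its roots in $\Qpbar$ are either $0$ or of valuation $0$. To exclude the second possibility, I adapt the argument used in lemma \ref{ftau}: applying $\theta\circ\phi_q^{-n}$ to $u=f(w)$ yields $u_n=f(w_n)$ for every $n\geq 0$, where $u_n=\theta\circ\phi_q^{-n}(u)$ is the $n$-th Lubin--Tate torsion point (so $u_n\in\mathfrak{m}_{\Cp}$ for $n\geq 1$) and $w_n=\theta\circ\phi_q^{-n}(w)\in\mathfrak{m}_{\Cp}$ because $\overline w$ has positive valuation in $\Etplus$. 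If $f(0)$ were a unit of $\mathcal{O}_E$, the ultrametric inequality would give $|f(w_n)|=|f(0)|=1$, contradicting $u_n\in\mathfrak{m}_{\Cp}$. Hence $f(0)\in\mathfrak{m}_E$, and combined with the Newton polygon analysis this forces $f(0)=0$.

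It remains to show $f'(0)\neq 0$ and $\kappa=\chi_\alpha$, which is where the main obstacle lies. Writing $f(T)=f_k T^k+O(T^{k+1})$ with $f_k\neq 0$ and $k\geq 1$, comparing the coefficients of $T^k$ in $f\circ Q=[\alpha]\circ f$ gives $f_k Q'(0)^k=\alpha f_k$, so $Q'(0)^k=\alpha$; a parallel computation in the second equation yields $\kappa(g)^k=\chi_\alpha(g)$ for every $g\in\Gamma_E$. To conclude that $k=1$, I enlarge $E$ if necessary so that $E/\Qp$ is Galois (which preserves the hypotheses by theorem \ref{liftable stable par ext}) and invoke proposition \ref{si K=E=Galois, car cristallin} with $E$ playing the role of the base field: this ensures that $\kappa$ is crystalline, and therefore has integer Hodge--Tate weights. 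Since $\chi_\alpha$ is a relative Lubin--Tate character with Hodge--Tate weight $1$ at the identity embedding of $F$, the identity $\kappa^k=\chi_\alpha|_{\Gamma_E}$ gives $k\cdot\mathrm{wt}_{\mathrm{id}}(\kappa)=1$, forcing $k=1$ and $\mathrm{wt}_{\mathrm{id}}(\kappa)=1$. This yields $f'(0)=f_1\neq 0$ and $\kappa(g)=\chi_\alpha(g)$ for all $g\in\Gamma_E$.
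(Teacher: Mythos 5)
Your proof is correct, and it departs from the paper's argument in both halves of the lemma. For $f(0)=0$, you use the Frobenius-side equation $f\circ Q=[\alpha]\circ f$: evaluate at $0$, note $f(0)$ is a root of $[\alpha](T)-T$, use the Newton polygon to see the nonzero roots are units, and then exclude the unit case via the valuations of $u_n=f(w_n)$. The paper instead works with the Galois-side equation $F_g\circ f=f\circ H_g$, compares lowest-degree coefficients to get $\chi_\alpha=\kappa^k$, and concludes $f(0)=0$ by observing that $k=0$ would force $\chi_\alpha\equiv 1$, contradicting the injectivity of $\chi_\alpha$; your route is more explicit but is doing similar work. For $k=1$, the divergence is more substantial: you compare Hodge--Tate weights at the identity embedding, using that $\chi_\alpha$ has weight $1$ there and that $\kappa$ has integer nonnegative weights, so $k\cdot\mathrm{wt}_{\mathrm{id}}(\kappa)=1$ forces $k=1$. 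The paper instead passes through the full structural description $\kappa=N_{E/F'}(\chi_\lambda^E)^r$ from proposition \ref{expression caractere eta} and theorem \ref{eta et sous-corps relatif}, then uses a dimension count on $p$-adic Lie groups to prove $F=F'$, and finally compares with $\chi_\alpha=N_{E/F}(\chi_\lambda^E)$ to extract $rk=1$. Your weight argument is leaner (one numerical comparison suffices), while the paper's machinery produces $\kappa=N_{E/F}(\chi_\lambda^E)$ as a byproduct. Two small imprecisions: you cite proposition \ref{si K=E=Galois, car cristallin}, which concerns $\eta=F_g'(0)$; the result you need is the analogous (unnamed) proposition in \S\ref{relevementHT} asserting $\kappa=H_g'(0)$ is crystalline with nonnegative weights, proved the same way. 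And both you and the paper implicitly replace $E$ by a suitable Galois enlargement to apply the \S\ref{relevementHT} results --- you at least flag this, but the effect of the enlargement on $w$, $f$ and $\kappa$ deserves a word, since $w$ is characterized by a condition involving the fields $E_n$ and so changes when $E$ does; what survives the change is the identity $\chi_\alpha=\kappa^k$ of characters restricted to the smaller Galois group, which is all the weight comparison requires.
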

\begin{proof}
We have $g(u) = F_g(u) = F_g \circ f(w)$ and $g(u) = g(f(w)) = f(g(w))=f \circ H_g(w)$, so that $F_g \circ f = f \circ H_g$. We know that we can write $F_g(T)=\chi_\alpha\cdot T+O(T^2)$ and $H_g(T) = \kappa(g)\cdot T+O(T^2)$. Write $f(T) = \alpha_kT^k+O(T^{k+1})$, with $\alpha_k \neq 0$. The equality $F_g \circ f = f \circ H_g$ gives us that $\alpha_k\chi_\alpha(g)T^k=\alpha_k\kappa(g)T^k$, so that $\chi_\alpha(g)=\kappa(g)^k$, and since $\chi_\alpha : \Gamma_E \to \mathcal{O}_F^\times$ is injective, this implies that $f(0)=0$. 

It remains to prove that $k=1$ in order to prove the lemma. By proposition \ref{expression caractere eta}, if $\lambda$ is a uniformizer of $\mathcal{O}_E$ such that $E_\infty \subset E_\lambda$, then there exists a subfield $F'$ of $E$ and an integer $r \geq 1$ such that $\kappa = N_{E/F'}(\chi_\lambda^E)^r$. Since $\chi_\alpha = \kappa^k$, this implies that $F \subset F'$. However, $\Gamma_E$ is isomorphic to an open compact subgroup of $\mathcal{O}_F^\times$ by $\chi_\alpha$ (and the embedding $\Gamma_E \hookrightarrow \Gamma_K$) and to an open compact subgroup of $\mathcal{O}_{F'}^\times$ by $\kappa$, which are analytic groups of respective dimension $[F:\Qp]$ and $[F':\Qp]$. Since $F \subset F'$, this implies that $F=F'$. Since we have $\chi_\alpha = N_{E/F}(\chi_\lambda^E)$, this implies that $\kappa = \chi_\alpha$ and $k=r=1$, so that $f'(0) \neq 0$.
\end{proof}

Following Li's terminology, $f$ is an isogeny of order $1$ from $Q$ to $[\alpha]$, and we can apply his results of \cite[§3]{Li97}.

\begin{prop}
\label{prop u=w mod Frob}
There exists $n \geq 0$ such that $\mathcal{O}_E[\![w]\!] = \phi_q^{-n}(\mathcal{O}_E[\![u]\!])$.
\end{prop}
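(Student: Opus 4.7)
My plan is to produce an integer $n \geq 0$ and an invertible power series $g \in \mathcal{O}_E[\![T]\!]^\times$ such that $f = [\alpha^n] \circ g$, and then to verify that $g(w) = \phi_q^{-n}(u)$ as elements of $\Atplus$. Granted both, the invertibility of $g$ gives
\[
\mathcal{O}_E[\![w]\!] = \mathcal{O}_E[\![g(w)]\!] = \mathcal{O}_E[\![\phi_q^{-n}(u)]\!] = \phi_q^{-n}(\mathcal{O}_E[\![u]\!]),
\]
which is the claim. The key inputs are the commutation relations $f \circ Q = [\alpha] \circ f$ and $f \circ H_g = [\chi_\alpha(g)] \circ f$ for $g \in \Gamma_E$, together with the identification $\kappa = \chi_\alpha$ from the previous lemma.

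\textbf{Constructing the factorization.} One peels off copies of $[\alpha]$ from the left: setting $f_0 := f$, one solves $f_{k-1} = [\alpha] \circ f_k$ for $f_k \in T\cdot \mathcal{O}_E[\![T]\!]$. Since $[\alpha](T)$ is a Weierstrass polynomial of degree $q$ with $[\alpha]'(0) = \alpha$ and $[\alpha](T) \equiv T^q \bmod \mathfrak{m}_E$, this equation can be solved uniquely whenever $v_E(f_{k-1}'(0)) \geq v_E(\alpha)$. The commutation $f_{k-1} \circ Q = [\alpha] \circ f_{k-1}$ then propagates to $f_k \circ Q = [\alpha] \circ f_k$, since $[\alpha]$ is injective as a compositional operator. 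One has $v_E(f_k'(0)) = v_E(f'(0)) - k\cdot v_E(\alpha)$, and the identification $\kappa = \chi_\alpha$ (together with the fact that $\chi_\alpha$ takes values in $\mathcal{O}_F^\times$) ensures that $v_E(\alpha)$ divides $v_E(f'(0))$, so that after $n := v_E(f'(0))/v_E(\alpha)$ steps we reach $g := f_n$ with $g'(0) \in \mathcal{O}_E^\times$, hence invertible; this yields $f = [\alpha^n] \circ g$.

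\textbf{Matching preimages.} With the factorization in hand, both $g(w)$ and $\phi_q^{-n}(u)$ are preimages of $u$ under $[\alpha^n]$: the first since $u = f(w) = [\alpha^n](g(w))$, the second since $[\alpha^n](\phi_q^{-n}(u)) = \phi_q^{-n}([\alpha^n](u)) = \phi_q^{-n}(\phi_q^n(u)) = u$ (using that $\phi_q$ is $\mathcal{O}_E$-linear and $[\alpha^n] \in \mathcal{O}_E[\![T]\!]$). Cancelling $[\alpha^n]$ in $f \circ H_g = [\chi_\alpha(g)] \circ f$ via $f = [\alpha^n] \circ g$ yields $g \circ H_g = [\chi_\alpha(g)] \circ g$, so $g(w)$ transforms under $\Gamma_E$ by $[\chi_\alpha(g)]$, identically to $\phi_q^{-n}(u)$. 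Their formal-group difference $\lambda := g(w) -_S \phi_q^{-n}(u)$ then lies in $\ker[\alpha^n]$ and satisfies $g(\lambda) = [\chi_\alpha(g)](\lambda)$ for all $g \in \Gamma_E$; since $\lambda$ is an algebraic torsion point (hence its $\Gamma_E$-orbit is finite) and $\chi_\alpha$ is injective on $\Gamma_E$, this forces $\lambda = 0$, giving $g(w) = \phi_q^{-n}(u)$.

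\textbf{Main obstacle.} The hardest part is the inductive factorization, ensuring that each $f_k$ is integral and that the induction terminates with $g$ invertible. The integrality of $f_k$ follows from Weierstrass preparation applied to $[\alpha]$, while the divisibility $v_E(\alpha) \mid v_E(f'(0))$ (needed to end with a unit $g'(0)$) uses the precise matching of characters $\kappa = \chi_\alpha$, in the spirit of Li's isogeny analysis in \cite{Li97}. The concluding torsion-vanishing step, though less technical, crucially uses both the injectivity of $\chi_\alpha$ on $\Gamma_E$ and the algebraicity of $\ker[\alpha^n]$.
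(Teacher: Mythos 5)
Your proof takes a genuinely different route from the paper, and unfortunately the route has a gap that the paper's approach sidesteps.

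The paper's proof is a one-liner: it invokes Li's Theorem 3 from \cite{Li97}, which (given that $f$ is an isogeny from $Q$ to $[\alpha]$ of order $1$) supplies a dual isogeny $\check{f} \in \mathcal{O}_E[\![T]\!]$ and $n \in \N$ with $\check{f} \circ f = Q^{\circ n}$. Evaluating at $w$ and using $Q(w)=\phi_q(w)$ gives $\check{f}(u) = \phi_q^n(w)$, hence $w = \phi_q^{-n}(\check{f}(u)) \in \phi_q^{-n}(\mathcal{O}_E[\![u]\!])$, which is the needed inclusion. You instead try to produce a much stronger factorization $f = [\alpha^n]\circ g$ with $g$ \emph{invertible} over $\mathcal{O}_E$ (which would in particular mean $Q$ is honestly conjugate, not merely semi-conjugate, to $[\alpha]$ over $\mathcal{O}_E$).

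The central gap is the inductive step. You assert that $f_{k-1} = [\alpha]\circ f_k$ is solvable in $T\cdot\mathcal{O}_E[\![T]\!]$ ``whenever $v_E(f_{k-1}'(0)) \geq v_E(\alpha)$''. This is false as a general principle: left-composition by $[\alpha]$ is far from surjective even after imposing the linear-coefficient condition. For example over $\Zp$ with $[\alpha](T)=pT+T^p$ and $p>2$, the $T^2$-coefficient of $[\alpha](h(T))$ is $ph_2$, so any series in the image has $T^2$-coefficient divisible by $p$; the series $pT + T^2 + T^p$ satisfies your valuation condition but is not in the image. The commutation $f\circ Q = [\alpha]\circ f$ does constrain $f$, and proving that this constraint forces $f$ into the image of $[\alpha]\circ(-)$ is essentially the content of Li's theorem — but you assert the factorization without establishing it. Relatedly, the divisibility $v_E(\alpha) \mid v_E(f'(0))$, used to terminate your induction at an invertible $g$, is claimed to follow from $\kappa = \chi_\alpha$ ``in the spirit of Li's isogeny analysis'', but no argument is given, and it does not obviously hold.

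There is a secondary soft spot in the ``matching preimages'' step. You form $\lambda = g(w)\,-_S\,\phi_q^{-n}(u)$ inside $\Atplus$ and then treat it as ``an algebraic torsion point''. But $\lambda$ lives in $\Atplus$, not in $\mathcal{O}_{\Qpbar}$, and it is not immediate that the solutions of $[\alpha^n](T)=0$ in $\Atplus$ are algebraic over $E$ (or even that any nonzero ones exist); some argument, e.g.\ passing through $\theta$ and using that $\theta$ is injective on the relevant torsion, would be needed to justify the vanishing. In summary, the ``peeling off'' idea is appealing, but as written it asserts a false surjectivity property of $[\alpha]\circ(-)$; the clean fix is to cite Li's dual-isogeny theorem directly, as the paper does.
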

\begin{proof}
By \cite[Thm. 3]{Li97}, there exists $\check{f} \in \mathcal{O}_{E}[\![T]\!]$ and $n \in \N$ such that $\check{f} \circ f = Q^{\circ n}$. Evaluating at $w$ gives us $\check{f}(u) = \phi_q^{-n}(w)$, so that $w \in \phi_q^{-n}(\mathcal{O}_E[\![u]\!])$.
\end{proof}

\begin{coro}
We have $\A_K^+ = \A_E^+$. 
\end{coro}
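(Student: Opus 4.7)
The plan is to establish the two inclusions separately. The forward inclusion $\A_K^+ \subset \A_E^+$ is immediate since $u \in \A_K^+ \subset \A_E^+ = \mathcal{O}_K[\![v]\!]$. For the reverse, I would combine Proposition~\ref{prop u=w mod Frob} with a character-matching argument.

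First, I would set up composition identities: since $u \in \mathcal{O}_K[\![v]\!]$ and $v \in \mathcal{O}_E[\![w]\!]$, there exist $\xi \in \mathcal{O}_K[\![T]\!]$ and $h \in \mathcal{O}_E[\![T]\!]$ with $u = \xi(v)$ and $v = h(w)$. The change of variable of Lemma~\ref{chgtvar}, applied to each of $u$, $v$, $w$, lets me assume $\xi(0) = h(0) = 0$, so that $\xi(T) = \xi_k T^k + O(T^{k+1})$ and $h(T) = h_j T^j + O(T^{j+1})$ with $\xi_k, h_j \neq 0$.

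Next, I would carry out the character analysis. Write $g(v) = G_g(v)$ for $g \in \Gamma_E$ and $G_g \in \mathcal{O}_K[\![T]\!]$, and define $\eta : \Gamma_E \to \mathcal{O}_K^\times$ by $\eta(g) = G_g'(0)$. Applying $g$ to $u = \xi(v)$ gives $[\chi_\alpha(g)] \circ \xi = \xi \circ G_g$; a leading-term comparison yields $\chi_\alpha = \eta^k$ on $\Gamma_E$. Analogously, $h \circ H_g = G_g \circ h$ combined with $\kappa = \chi_\alpha$ from the preceding lemma yields $\eta = \chi_\alpha^j$. Therefore $\chi_\alpha^{jk-1}$ is trivial on $\Gamma_E$. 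Since $\chi_\alpha$ injects $\Gamma_E$ into an open, hence infinite, subgroup of $\mathcal{O}_F^\times$, which contains only finitely many roots of unity, this forces $jk = 1$, so $j = k = 1$.

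Finally, with $k = 1$ one has $\xi'(0) \neq 0$; to conclude $\mathcal{O}_K[\![u]\!] = \mathcal{O}_K[\![v]\!]$ it suffices to upgrade this to $\xi'(0) \in \mathcal{O}_K^\times$, so that $\xi$ is invertible in $\mathcal{O}_K[\![T]\!]$ for composition. This promotion is the main obstacle. I would combine the Frobenius semi-conjugation $[\alpha] \circ \xi = \xi \circ R$ with \cite[Thm.~3]{Li97} (as in the proof of Proposition~\ref{prop u=w mod Frob}) to produce $\check{\xi} \in \mathcal{O}_K[\![T]\!]$ and $m \geq 0$ with $\check{\xi} \circ \xi = R^{\circ m}$, and then invoke the equality $\eta = \chi_\alpha$ via local class field theory (in the spirit of the proof of Theorem~\ref{thm phigamma implique LT}) to conclude that the field-of-norms extension $\E_E/\E_K$ is trivial, equivalently that $\bar{\xi}$ has a nonzero linear term, i.e., $\xi_1 \in \mathcal{O}_K^\times$. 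Then $v = \xi^{-1}(u) \in \mathcal{O}_K[\![u]\!]$, finishing the proof.
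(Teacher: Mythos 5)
The first three-quarters of your proposal are sound: the forward inclusion, the composition identities $u = \xi(v)$, $v = h(w)$, the derivative-character comparison giving $\chi_\alpha = \eta^k$ and $\eta = \chi_\alpha^j$, and the injectivity argument forcing $jk = 1$, hence $j = k = 1$ and $\xi'(0) \neq 0$, are all correct and in the spirit of the preceding lemma. You have also correctly isolated the real obstacle: promoting $\xi'(0) \neq 0$ to $\xi'(0) \in \mathcal{O}_K^\times$. The problem is the route you propose to close it.

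The claim that ``$\eta = \chi_\alpha$ via local class field theory implies $\E_E/\E_K$ is trivial'' does not hold. The identity $\eta = \chi_\alpha$ is obtained by comparing linear coefficients; it records only the order of vanishing of $\xi$ at $0$, which you already knew is $1$, and says nothing about the Weierstrass degree of the reduction $\bar\xi$. For example, there is nothing in the character equality that rules out $\xi(T) = \varpi T + T^q + \cdots$, for which $\xi'(0) = \varpi \neq 0$, $\eta = \chi_\alpha$, yet $\bar\xi(T) = T^q + \cdots$ has Weierstrass degree $q$, so that $e(\E_E/\E_K) = q$. Nothing in the class-field-theoretic description of $\eta$ (Proposition~\ref{expression caractere eta}) constrains this $\bmod\ \varpi_E$ behaviour; in the preceding lemma that description is used only to get $r = 1$ and $f'(0) \neq 0$, precisely the analogue of what you already have.

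The ingredient that actually closes the gap is one you have already produced but do not use: from $\check\xi \circ \xi = R^{\circ m}$ you get, evaluating at $v$, that $\check\xi(u) = \phi_q^m(v)$, i.e.\ $v \in \phi_q^{-m}(\mathcal{O}_K[\![u]\!])$; reducing modulo $\varpi_E$ this says $\bar v$ lies in the purely inseparable extension $k[\![\bar u^{1/q^m}]\!]$ of $k[\![\bar u]\!]$. Since $\E_E/\E_K$ is separable by the theory of the field of norms, an element that is both separable and purely inseparable over $\E_K$ lies in $\E_K$, so $\bar v \in k[\![\bar u]\!]$, whence $\A_K^+/\varpi_E = \A_E^+/\varpi_E$ and $\A_K^+ = \A_E^+$. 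This is exactly how the paper concludes; the only difference is that the paper gets the containment $\A_E^+ \subset \phi_q^{-n}(\A_K^+)$ for free from Proposition~\ref{prop u=w mod Frob} (working with the pair $(u,w)$) rather than re-applying Li's theorem to the pair $(u,v)$ as you do, which makes the argument a bit shorter but is mathematically equivalent.
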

\begin{proof}
One has $\A_K^+ \subset \A_E^+ \subset \phi_q^{-n}(\A_K^+)$ by the previous proposition. In particular, one can write $v=h(\phi_q^{-n}(u))$ with $h \in \mathcal{O}_E[\![T]\!]$. This implies that $\overline{v}$ belongs to a purely inseparable extension of $k_E[\![\overline{u}]\!]$. However, by definition of $v$ and the theory of field of norms, $\overline{v}$ belongs to a separable extension of $k_E[\![\overline{u}]\!]$, so that $\overline{v} \in k_E[\![\overline{u}]\!]$. This implies that $\A_K^+/\varpi_E\A_K^+ = \A_E^+/\varpi_E\A_E^+$ and therefore that $\A_K^+ = \A_E^+$.
\end{proof}

In particular, this answers our question \ref{question unramified} in the case of Lubin-Tate extensions: the only extensions of $K_\infty$ for which there exists a finite height lift are the unramified ones. 

We will now show how to relate the power series given in the case of a finite height lift of the field of norms with endomorphisms of some relative Lubin-Tate group. To do so, we will use the ``canonical Cohen ring'' for norm fields constructed by Cais and Davis in \cite{cais2015canonical}. Let $K_\infty/K$ be an infinite strictly APF extension which is Galois, and such that the action of $\Gamma_K = \Gal(K_\infty/K)$ is liftable on $\A_K = \widehat{\mathcal{O}_E[\![T]\!][1/T]}$ with power series $P(T)$ and $\{F_g(T)\}_{g \in \Gamma_K}$. As in section \ref{relevementHT}, we can assume that $E$ contains $K$, is Galois over $\Qp$ and is such that $E_\infty/E$ is totally ramified. By proposition \ref{existvtau}, there exists $u_\tau \in \Atplus$ such that $\phi_q(u_\tau) = P^\tau(u_\tau)$, $g(u_\tau) = F_g^\tau(u_\tau)$ and the elements $v_n:=\theta \circ \phi_q^{-n}(u_\tau)$ belong to $\Qpbar$. Let $\kappa$ be the character $\Gamma_E \to \mathcal{O}_E^\times$ given by $\kappa(g)=(F_g^\tau)'(0)$. By theorem \ref{thm phigamma implique LT}, there exists a subfield $F$ of $E$ and a relative Lubin-Tate group $S$, relative to the extension $F^{\mathrm{unr}} \cap E$ and attached to an element $\alpha$ of $F$, such that if $E_\infty^S$ denotes the extension of $E$ generated by the torsion points of $S$, then $E_\infty \subset E_\infty^S$ and the extension $E_\infty^S/E_\infty$ is finite. Moreover, there exists $r \geq 1$ such that $\kappa = \chi_\alpha^r$. Up to replacing $E$ by a finite extension of $E$, we can also assume that $E_\infty^S=E_\infty$ and we do so in what follows. Let $w_0 = 0$ and let $(w_n)_{n \in \N}$ be a compatible sequence of roots of iterates of $[\alpha]$, that is such that $[\alpha](w_{n+1})=w_n$ and $w_1 \neq 0$. Let $\overline{w} = (w_0,\cdots) \in \Etplus$. By §9.2 of \cite{Col02}, there exists $w \in \Atplus$, whose image in $\Etplus$ is $\overline{w}$, and such that $\phi_q(w) = [\alpha](w)$ and $g(w) = [\chi_\alpha(g)](w)$ for $g \in \G_K$. We now prove a link between the ring that we called $\A_E^+$ and the canonical Cohen ring $\A_{E_\infty/E}^+$ of Cais and Davis:

\begin{lemm}
\label{lemma canonical ring = ring LT upto Frob}
There exists $k \geq 0$ such that $\A_{E_\infty/E}^+ = \phi_q^{-k}(\mathcal{O}_E[\![w]\!])$.
\end{lemm}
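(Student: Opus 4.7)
The plan is to reduce the problem to the $\phi$-iterate setting via the subextension $E_\infty/E_1$, where $E_1$ is the first Lubin--Tate torsion level (the maximal tame subextension of $E_\infty/E$), and then invoke Li's isogeny theorem \cite{Li97} exactly as in Proposition \ref{prop u=w mod Frob}. The key observation is that although $E_\infty/E$ itself is not $\phi$-iterate in the sense of Definition \ref{defphiit} (the layer $E_1/E$ has tame degree $q-1$), the subextension $E_\infty/E_1$ \emph{is} $\phi$-iterate with iteration series $[\alpha]$, as explicitly noted in the examples after Definition \ref{defphiit}. Applying Theorem \ref{relevement caisdavis} yields $\A_{E_\infty/E_1}^+ = \mathcal{O}_{E_1}[\![w]\!]$: the Lubin--Tate period $w$ plays the role of the canonical generator since $\phi_q(w)=[\alpha](w)$ and $\overline{w}$ is a uniformizer of the norm field $X_{E_1}(E_\infty)$.

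I would then compare $\A_{E_\infty/E}^+$ to $\A_{E_\infty/E_1}^+$: the towers of elementary subextensions of the two APF extensions agree from $E_1$ upward, so the Cais--Davis sequence condition of Definition \ref{defi canonical CaisDavis} is the same for both rings, giving the identification $\A_{E_\infty/E}^+ = \A_{E_\infty/E_1}^+ \cap (\mathcal{O}_E \otimes_{\mathcal{O}_{E_0}} W(\Etplus))$ as a sub-$\mathcal{O}_E$-algebra of $\mathcal{O}_{E_1}[\![w]\!]$. The Cais--Davis formalism also provides a generator $u \in \Atplus$ with $\A_{E_\infty/E}^+ = \mathcal{O}_E[\![u]\!]$ and $\phi_q(u)=Q(u)$ for some $Q \in T\cdot\mathcal{O}_E[\![T]\!]$. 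Writing $u=f(w)$ with $f \in \mathcal{O}_{E_1}[\![T]\!]$ and combining $\phi_q(u)=Q(u)$ with $\phi_q(w)=[\alpha](w)$ yields the commutation $Q \circ f = f \circ [\alpha]$, identifying $f$ as an order-one isogeny from $[\alpha]$ to $Q$ in Li's sense.

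Li's isogeny theorem (\cite[Thm.~3]{Li97}) then produces $\check f \in \mathcal{O}_E[\![T]\!]$ and $k \geq 0$ with $\check f \circ f = [\alpha]^{\circ k}$. Evaluating at $w$ gives $\check f(u) = \phi_q^k(w)$, hence $\phi_q^k(w) \in \mathcal{O}_E[\![u]\!] = \A_{E_\infty/E}^+$, i.e.\ $\mathcal{O}_E[\![w]\!] \subset \phi_q^{-k}(\A_{E_\infty/E}^+)$. Dually, checking the Cais--Davis sequence condition directly on $w$ (the $w_n$ lie in $\mathcal{O}_{E_{n+1}}$ by construction) shows $w \in \A_{E_\infty/E}^+$, so writing $w = h(u)$ with $h \in \mathcal{O}_E[\![T]\!]$ and applying Li's theorem symmetrically gives $\A_{E_\infty/E}^+ \subset \phi_q^{-k}(\mathcal{O}_E[\![w]\!])$ after adjusting $k$. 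A final reduction modulo $\varpi_E$, comparing $\overline{\phi_q^{-k}(w)}$ with the uniformizer of $X_E(E_\infty)$, upgrades the two containments to equality. The main obstacle is producing the generator $u$ of $\A_{E_\infty/E}^+$ explicitly enough for Li's theorem to apply: Theorem \ref{relevement caisdavis} is strictly formulated for $\phi$-iterate bases, so one must either extend it slightly to accommodate a tame first layer, or descend from a $\phi$-iterate sub-extension produced via Theorem \ref{engendrée LT est phiit}.
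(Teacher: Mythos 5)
Your overall strategy --- locate a sub-extension whose Cais--Davis ring is visibly generated by the Lubin--Tate period $w$, then compare to $\A_{E_\infty/E}^+$ via Li's isogeny theorem as in proposition \ref{prop u=w mod Frob} --- matches the paper's, and you correctly flag the central obstacle at the end. But the specific route has gaps. The identification $\A_{E_\infty/E}^+ = \A_{E_\infty/E_1}^+ \cap (\mathcal{O}_E \otimes_{\mathcal{O}_{E_0}} W(\Etplus))$ is asserted, not established: definition \ref{defi canonical CaisDavis} is built from generalized Witt vectors $W_\pi$ with $\pi$ a uniformizer of the \emph{base field}, so replacing the base $E$ by $E_1$ changes the entire construction (and the divisibility condition indexed by the elementary tower), not merely the coefficient ring. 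Working over $E_1$ also puts the isogenies $f$, $\check f$ in $\mathcal{O}_{E_1}[\![T]\!]$, so your application of Li's theorem only yields $\phi_q^k(w)\in\mathcal{O}_{E_1}[\![u]\!]$ rather than in $\A_{E_\infty/E}^+=\mathcal{O}_E[\![u]\!]$. Your ``dually'' step tacitly assumes that the Lubin--Tate tower is the tower of elementary extensions for $E_\infty/E$ --- precisely the kind of ramification fact that needs proof. And, as you acknowledge, theorem \ref{relevement caisdavis} does not hand you a generator $u$ of $\A_{E_\infty/E}^+$ with $\phi_q(u)=Q(u)$ and $Q\in T\cdot\mathcal{O}_E[\![T]\!]$, since $E_\infty/E$ is not $\phi$-iterate.

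The paper's proof avoids all of this by choosing a different base for the comparison. It works over $E'=F^{\mathrm{unr}}\cap E$, the unramified base of the relative Lubin--Tate group (rather than the tame torsion level $E_1$), and invokes the explicit ramification filtration of a relative Lubin--Tate extension to recognize the tower $\{E'(w_n)\}_{n\geq 0}$ as the tower of elementary extensions of $F_\infty^S/E'$. This makes $w\in\A_{F_\infty^S/E'}^+$ immediate from definition \ref{defi canonical CaisDavis}, whence $\A_{F_\infty^S/E'}^+=\mathcal{O}_{E'}[\![w]\!]$, without appealing to theorem \ref{relevement caisdavis} at all. The bridge to $\A_{E_\infty/E}^+$ is then the already-proved embedding of proposition \ref{prop embedding canonical Cohen ring}, which keeps the base fixed and enlarges only the top extension, so it gives $w\in\A_{E_\infty/E}^+=\mathcal{O}_E[\![w']\!]$ directly and with $\mathcal{O}_E$-coefficients. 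Writing $w=h(w')$ with $h\in\mathcal{O}_E[\![T]\!]$ reduces exactly to the argument of proposition \ref{prop u=w mod Frob} --- a single application of Li's theorem, not a symmetric pair --- and yields $\mathcal{O}_E[\![w']\!]=\phi_q^{-k}(\mathcal{O}_E[\![w]\!])$. The key insight you are missing is that $E'$, where the elementary tower is computable by Lubin--Tate ramification theory, is the natural base; descending only to $E_1$ is what creates the base-change and coefficient-ring problems you run into.
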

\begin{proof}
Let $E'= F^{\mathrm{unr}} \cap E$ and let $F_\infty^S$ be the extension of $F$ generated over $E'$ by the torsion points of $S$. For $n\geq 1$, let $F_n:=E'(w_n)$, so that $F_\infty^S=\bigcup_{n \geq 1}F_n$. Let $F_0 = E'$.

The computation of the ramifications subgroups of the Galois group of a relative Lubin-Tate extension (see for example \cite[Prop. 3.7]{SchneiderLTcours}) shows that the sequence $\{F_n\}_{n \geq 0}$ is the tower of elementary extensions attached to $F_\infty^S/E'$ defined in \cite[1.4]{Win83}. Following definition \ref{defi canonical CaisDavis}, this implies that the element $w \in \Atplus$ belongs to the ring $\A_{F_\infty^S/E'}^+$. By definition \ref{defi canonical CaisDavis}, the image of $\A_{F_\infty^S/E'}^+$ modulo a uniformizer is contained inside $\E_{F_\infty^S/E'}^+$. Moreover, since $\overline{w}$ is a uniformizer of the ring $\E_{F_\infty^S/E'}^+$ and since $\A_{F_\infty^S/E'}^+$ is an $\mathcal{O}_{E'}$-algebra which is separated and complete for the $p$-adic topology, this implies that $\A_{F_\infty^S/E'}^+=\mathcal{O}_{E'}[\![w]\!]$.

Let $w'$ be such that $\A_{E_\infty/E}^+ = \mathcal{O}_E[\![w']\!]$. By proposition \ref{prop embedding canonical Cohen ring}, we have $w \in \mathcal{O}_E[\![w']\!]$, and the same proof as the one of proposition \ref{prop u=w mod Frob} shows that $\mathcal{O}_E[\![w']\!] = \phi_q^{-n}(\mathcal{O}_E[\![w]\!])$. 
\end{proof}

The following proposition allows us to see some power of the Frobenius of $u_\tau$ inside the ring $\A_{E_\infty/E}^+$:
\begin{prop}
\label{prop v_tau inside Cohen ring}
There exists $i \geq 0$ such that $\phi_q^i(u_\tau) \in \A_{E_\infty/E}^+$.
\end{prop}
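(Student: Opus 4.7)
The idea is to first reduce, via Lemma~\ref{lemma canonical ring = ring LT upto Frob}, to proving that $\phi_q^N(u_\tau)\in\mathcal{O}_E[\![w]\!]$ for some $N\geq 0$, and then to compare the towers $\{E(v_{n-1})\}_{n\geq 1}$ (with $v_n=\theta\circ\phi_q^{-n}(u_\tau)$) and $\{E\cdot F_n\}_{n\geq 1}$ inside $E_\infty$. The ring $\A_{E_\infty/E}^+$ is stable under $\phi_q$ (a standard property of the Cais--Davis canonical Cohen ring), so $\mathcal{O}_E[\![w]\!]=\phi_q^k(\A_{E_\infty/E}^+)\subset\A_{E_\infty/E}^+$ and any such $N$ will witness the conclusion with $i=N$.

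Next, I would establish a characterization of $\mathcal{O}_E[\![w]\!]$ analogous to Proposition~\ref{existw}. The proof of Lemma~\ref{lemma canonical ring = ring LT upto Frob} identifies $\{F_n\}_{n\geq 0}$ as the tower of elementary extensions of $F_\infty^S/E'$ and identifies $\A_{F_\infty^S/E'}^+$ with $\mathcal{O}_{E'}[\![w]\!]$. Since $\theta\circ\phi_q^{-n}(w)=w_n\in\mathcal{O}_{F_n}$ and $\overline w$ has minimal positive valuation in $\Etplus$ among elements whose Witt components land in $\varprojlim_{x\mapsto x^q}\mathcal{O}_{F_n}/\mathfrak{a}^c$, the same argument as in Proposition~\ref{existw} yields
\[\mathcal{O}_E[\![w]\!]=\bigl\{x\in\Atplus : \theta\circ\phi_q^{-n}(x)\in\mathcal{O}_{E\cdot F_n}\ \text{for all sufficiently large}\ n\bigr\}.\]

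The crucial step is the comparison of the two towers. Both $\{E(v_{n-1})\}$ and $\{E\cdot F_n\}$ are increasing towers of subfields of the strictly APF extension $E_\infty/E$; both exhaust $E_\infty$ (by Proposition~\ref{existvtau} and by construction, respectively); and each arises from a $\phi$-iterate construction, namely taking compatible sequences of roots of iterates of the noninvertible power series $P^\tau$ and $[\alpha]$. By Wintenberger's classification of elementary subextensions of a strictly APF extension \cite[\S 1.4]{Win83}, combined with the Cais--Davis--Lubin theorem on $\phi$-iterate extensions recalled in Section~\ref{extphiit}, any two such exhausting towers coincide with the canonical elementary tower up to a bounded shift in indexing; hence there exists $c\geq 0$ with $v_m\in\mathcal{O}_{E\cdot F_{m+c}}$ for all $m\geq 0$.

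Taking $i\geq c$, one then has for every sufficiently large $n$
\[\theta\circ\phi_q^{-n}\bigl(\phi_q^i(u_\tau)\bigr)=v_{n-i}\in\mathcal{O}_{E\cdot F_{n-i+c}}\subset\mathcal{O}_{E\cdot F_n},\]
and the characterization of the second paragraph forces $\phi_q^i(u_\tau)\in\mathcal{O}_E[\![w]\!]\subset\A_{E_\infty/E}^+$, as desired. The main obstacle is the tower-comparison step: rigorously showing that two exhausting $\phi$-iterate-style towers inside a strictly APF extension agree up to a bounded shift requires a careful ramification-theoretic argument, most naturally carried out by passing through the canonical tower of elementary subextensions and tracking conductor jumps on both sides.
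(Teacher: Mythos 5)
Your overall shape is reasonable — reduce to showing that some $\phi_q^N(u_\tau)$ lies in $\mathcal{O}_E[\![w]\!]$, then compare the tower $\{E(v_n)\}$ with a reference tower — and your final displayed computation would indeed close the argument \emph{if} the tower-comparison claim were available. But the tower-comparison step is precisely where all the content of the proposition lives, and you have not proved it; you have only asserted that it ``requires a careful ramification-theoretic argument''. There is no off-the-shelf theorem, in Wintenberger's paper or in the Cais--Davis--Lubin circle of results, stating that any two exhausting $\phi$-iterate-style towers inside a strictly APF extension coincide with the canonical elementary tower up to a bounded shift of indices. That statement is essentially equivalent to the proposition you are trying to prove, so invoking it is circular.

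What the paper actually does, and what your plan would eventually have to reproduce, is to prove the needed tower comparison by hand. Concretely: one first shows (Lemma~\ref{lemma n_0 tot ram of deg q}) that the extensions $E(v_{n+1})/E(v_n)$ are eventually totally ramified of degree $q$ with $v_{E(v_n)}(v_n)$ stabilising; one then chooses uniformizers $\pi_n$ of $E(v_n)$ and establishes two-sided bounds $Aq^n\leq v_{E(v_n)}(\sigma(\pi_n)-\pi_n)\leq Bq^n$ uniformly in $n$ and in $\sigma$ (Lemmas~\ref{lemm positive constants ord pi_n sigma} and~\ref{lemma true for all sigma in Gal(E(vn)/E)}); and only then does one feed these estimates into the Cais--Davis machinery (their Lemmas 7.5--7.11) to conclude that the $\pi_n$, hence the $v_n$, sit in the canonical elementary tower $\{E^{(m)}\}$ up to a bounded index shift, from which $\phi_q^{i_1}(u_\tau)\in\A_{E_\infty/E}^+$ follows directly from Definition~\ref{defi canonical CaisDavis}. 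Note also that the paper does not pass through the intermediate characterization of $\mathcal{O}_E[\![w]\!]$ that you propose (in terms of $\theta\circ\phi_q^{-n}(x)\in\mathcal{O}_{E\cdot F_n}$): that characterization is itself nontrivial and would need its own proof, whereas working directly with $\A_{E_\infty/E}^+$ via its defining condition sidesteps it. So your plan correctly locates the difficulty but leaves the essential work undone, and one ingredient you want to use along the way (the alternative description of $\mathcal{O}_E[\![w]\!]$) is unjustified.
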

The strategy to prove this proposition is basically the same as the one proposed by Cais and Davis in \cite[§7]{cais2015canonical} to show that there was a lift of a uniformizer to $\A_{L/K}^+$ for $\phi$-iterate extensions. Since most of the arguments are the same, we will only explain the points where we have to adapt the proof. The idea is to relate in some way the fields $E(v_n)$ and the tower $\{E^{(m)}\}$ of elementary extensions extensions attached to the APF extension $E_\infty/E$.

\begin{lemm}
\label{lemma n_0 tot ram of deg q}
There exists $n_0 \geq 0$ such that for all $n \geq n_0$, the extension $E(v_{n+1})/E(v_n)$ is totally ramified of degree $q$ and such that $v_{E(v_n)}(v_n)$ is independent of $n \geq n_0$.
\end{lemm}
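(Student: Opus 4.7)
The plan is to combine a Newton polygon analysis of $P^\tau(T)-v_n$ with a descending chain argument on the positive integer valuations $v_{E(v_n)}(v_n)$. As a preliminary, I would verify that $v(v_n)$ tends to $0$ (while remaining positive) as $n\to\infty$, where $v$ denotes the valuation on $\overline{E}$ normalized by $v(\varpi_E)=1$. Writing $u_\tau=\sum_k[x_k]\varpi_E^k\in\Atplus$ with $x_k\in\Etplus$ and identifying $\Etplus$ with $\varprojlim_{x\mapsto x^q}\mathcal{O}_{\Cp}$, one computes $\theta\circ\phi_q^{-n}(u_\tau)=\sum_k x_k^{(n)}\varpi_E^k$. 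The dominant term $x_0^{(n)}=(\overline{u_\tau})^{(n)}$ satisfies $v(x_0^{(n)})=v(x_0^{(0)})/q^n$ (from $(x_0^{(n+1)})^q=x_0^{(n)}$), and is nonzero since $\overline{u_\tau}\neq 0$; the remaining terms have $v$-valuation $\geq 1$, so for large $n$ the dominant term controls $v(v_n)$.

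Next, I would expand $P^\tau(T)=\sum_{i\geq 1}a_iT^i$. Since $P^\tau\equiv T^q\pmod{\varpi_E}$, one has $v(a_q)=0$ and $v(a_i)\geq 1$ for $1\leq i<q$. For $n$ large enough that $v(v_n)\leq 1$, the Newton polygon of $P^\tau(T)-v_n$ consists of a single segment of length $q$ and slope $-v(v_n)/q$ followed by segments of non-negative slope. By Weierstrass preparation this produces a distinguished polynomial $f_n(T)\in\mathcal{O}_{E(v_n)}[T]$ of degree $q$, reducing to $T^q$ modulo the maximal ideal, of which $v_{n+1}$ is a root, and whose roots all have the same valuation $v(v_n)/q$. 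In particular $[E(v_{n+1}):E(v_n)]$ divides $q$ and the extension is totally ramified.

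Finally, set $e_n=[E(v_n):E]$ (equal to the ramification index, since $E_\infty/E$ is totally ramified), $a_n=v_{E(v_n)}(v_n)=e_n v(v_n)\in\Z_{>0}$, and $m_n=[E(v_{n+1}):E(v_n)]$. From $e_{n+1}=m_ne_n$ and $v(v_{n+1})=v(v_n)/q$ one derives $a_{n+1}=(m_n/q)\,a_n$; since $m_n\leq q$, we have $a_{n+1}\leq a_n$ with equality if and only if $m_n=q$. The non-increasing sequence of positive integers $(a_n)$ must therefore stabilize: there exists $n_0$ such that $m_n=q$ and $v_{E(v_n)}(v_n)=a_{n_0}$ for all $n\geq n_0$, establishing both assertions. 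The main technical input is securing the Newton polygon regime uniformly for large $n$, which itself reduces to the control $v(v_n)\to 0$; once that is in place, the concluding descending-chain argument is purely combinatorial.
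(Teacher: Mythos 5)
Your proof is correct and follows the same overall strategy as the paper: Newton polygons applied to $P^\tau(T)-v_n$, the observation that $v(v_n)\to 0$ because $v_n$ coincides with the $n$-th component of $\overline{u_\tau}$ modulo $\mathfrak{a}^c$, and the recursion $v(v_{n+1})=v(v_n)/q$. The one substantive difference is in how the degree $q$ is obtained. The paper first asserts outright that for $n$ large $E(v_{n+1})/E(v_n)$ is totally ramified of degree $q$ ``by the theory of Newton polygons'' and then deduces the stabilization of $v_{E(v_n)}(v_n)$, whereas you only extract $m_n:=[E(v_{n+1}):E(v_n)]\le q$ from the Weierstrass factorization and then prove both assertions \emph{simultaneously} from the purely combinatorial fact that $a_n:=v_{E(v_n)}(v_n)$ is a non-increasing sequence of positive integers, via $a_{n+1}=(m_n/q)a_n$. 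This is arguably the cleaner route, since a Newton polygon on its own gives slope data but not irreducibility of the Weierstrass factor, so it does not by itself force $m_n=q$; your argument supplies the missing step. Two small imprecisions to flag: $[E(v_{n+1}):E(v_n)]$ need not \emph{divide} $q$ (the minimal polynomial of $v_{n+1}$ over $E(v_n)$ is a factor of the degree-$q$ distinguished polynomial, but its degree can be any integer $\le q$); you only ever use $m_n\le q$, so this does not affect the proof. Also, total ramification of $E(v_{n+1})/E(v_n)$ is not a Newton-polygon consequence but follows from the hypothesis, already arranged in the paper's setup, that $E_\infty/E$ is totally ramified — you do invoke this when identifying $e_n$ with $[E(v_n):E]$, so your argument is internally consistent, but the earlier phrasing makes it sound like a byproduct of the factorization.
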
 
\begin{proof}
We have $P^\tau(v_{n+1})=v_n$, with $P^\tau \in T\cdot \mathcal{O}_E[\![T]\!]$ of Weierstrass degree $q$. Since $v_n = \overline{u}_n \mod p^c$, we know that $v_E(v_n) \rightarrow 0$ as $n \rightarrow +\infty$. The fact that for $n$ big enough, $E(v_{n+1})/E(v_n)$ is totally ramified of degree $q$ then follows from the theory of Newton polygons. 
Now let $n_0 \geq 0$ be such that, for all $n \geq n_0$, $E(v_{n+1})/E(v_n)$ is totally ramified of degree $q$ and such that $v_p(v_n) < c$. We then have $v_p(v_{n+1}) = v_p(\overline{u}_{n+1}) = \frac{1}{q}v_p(\overline{u}_n) = \frac{1}{q}v_p(v_n)$, so that, since $E(v_{n+1})/E(v_n)$ is totally ramified of degree $q$, $v_{E(v_n)}(v_n)$ is independent of $n \geq n_0$, and is equal to $v_{E_{v_{n_0}}}(v_{n_0})$.
\end{proof}

Let $\pi_n$ be a uniformizer of $\mathcal{O}_{E(v_n)}$ such that $E(v_n) = E(\pi_n)$ (this is possible since $E(v_n)/E$ is totally ramified).
\begin{lemm}
\label{lemm positive constants ord pi_n sigma}
There exist positive constants $A$ and $B$ such that, for all $n \geq 1$, for all $\sigma \in \Gal(E(v_n)/E(v_{n-1})$, 
$$Aq^n \leq v_{E(v_n)}(\sigma(\pi_n)-\pi_n) \leq Bq^n.$$
\end{lemm}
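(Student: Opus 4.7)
The approach is to compute the different of $E(v_n)/E(v_{n-1})$ and derive the individual bounds from the standard decomposition $v_{E(v_n)}(\mathfrak{d}_{E(v_n)/E(v_{n-1})}) = \sum_{\sigma\neq\id}v_{E(v_n)}(\sigma(\pi_n)-\pi_n)$. By Lemma~\ref{lemma n_0 tot ram of deg q}, for $n\geq n_0$ the extension $E(v_n)/E(v_{n-1})$ is totally ramified of degree $q$, with $v_E(v_n)=m/e_n$ and $e_n=e_{n_0}q^{n-n_0}$. Since $P^\tau(T)\equiv T^q\pmod{\mathfrak{m}_E[\![T]\!]}$, Weierstrass preparation factors $P^\tau(T)-v_{n-1}=U(T)f_n(T)$ with $U\in\mathcal{O}_{E(v_{n-1})}[\![T]\!]^\times$ reducing to $1$ modulo $\mathfrak{m}_E$ and $f_n$ the Eisenstein minimal polynomial of $v_n$ over $E(v_{n-1})$. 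Differentiating and evaluating at $v_n$ gives
\[
v_{E(v_n)}(\mathfrak{d}_{E(v_n)/E(v_{n-1})}) = v_{E(v_n)}((P^\tau)'(v_n)).
\]

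Next, a term-by-term analysis of $(P^\tau)'(v_n)=\sum_k k a_k v_n^{k-1}$, using that $a_k\in\mathfrak{m}_E$ for $k<q$, that $a_q$ is a unit, and that $v_E(v_n)\to 0$, produces positive constants $c_1,c_2$ with $c_1\leq v_E((P^\tau)'(v_n))\leq c_2$ for $n$ large. Multiplying by $e_n$ yields constants $C_1,C_2>0$ such that $C_1 q^n\leq v_{E(v_n)}(\mathfrak{d}_{E(v_n)/E(v_{n-1})})\leq C_2 q^n$. The upper bound in the lemma now follows immediately, each summand being bounded above by the total, so I take $B:=C_2$; the finitely many exceptional $n$ below the threshold are absorbed into the constants.

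For the lower bound I would exploit the strictly APF property of $E_\infty/E$. If $c>0$ denotes Wintenberger's constant, applying the APF inequality to the pair $E(v_{n-1})\subset E(v_n)$ with $x=\pi_n$ gives
\[
v_E\Bigl(\prod_{\sigma\neq\id}\bigl(1+\tfrac{\sigma(\pi_n)-\pi_n}{\pi_n}\bigr)-1\Bigr)\geq c.
\]
A careful analysis of this expansion—together with APF applied to auxiliary elements such as $\pi_n^k$ or $1+y\pi_n$ for suitable $y\in\mathcal{O}_{E(v_{n-1})}$, used to rule out systematic cancellations among the factors—shows that the smallest ramification break $b_0$ of $\Gal(E(v_n)/E(v_{n-1}))$ in lower numbering satisfies $b_0+1\geq Aq^n$ for some $A>0$. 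Since every non-trivial $\sigma$ belongs to $G_0\subseteq G_{b_0}$, we conclude $v_{E(v_n)}(\sigma(\pi_n)-\pi_n)\geq b_0+1\geq Aq^n$. The technical heart of the argument is precisely this last step: \textit{a priori} APF only controls the valuation of the product minus $1$, and excluding cancellations that would hide a small break is the main obstacle.
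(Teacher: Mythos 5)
Your proof takes a genuinely different route from the paper, but it is incomplete: the lower bound is not established, and you acknowledge this yourself.

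The paper's approach is to reduce to the element $v_n$ (rather than $\pi_n$) directly. It invokes the analogue of \cite[Lemm.~7.2]{cais2015canonical} to get
$$A'q^n \leq \mathrm{ord}_{v_n}(\sigma(v_n)-v_n) \leq B'q^n,$$
and then uses a Teichmüller expansion $v_n = \sum_{k\geq k_0}a_k\pi_n^k$ (with $\sigma(a_k)=a_k$ and $a_{k_0}\neq 0$) to deduce the exact relation $\mathrm{ord}_{\pi_n}(\sigma(\pi_n)-\pi_n)+k_0 = k_0\,\mathrm{ord}_{v_n}(\sigma(v_n)-v_n)$. This transfers both bounds at once, and the ``hard'' lower bound is simply inherited from the Cais--Davis lemma, which works with $v_n$ as a root of the explicit Weierstrass polynomial $P^\tau(T)-v_{n-1}$ and argues via Newton polygons. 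Your route instead computes the different through $(P^\tau)'(v_n)$ and uses the decomposition $v_{E(v_n)}(\mathfrak{d}) = \sum_{\sigma\neq\id}v_{E(v_n)}(\sigma(\pi_n)-\pi_n)$.

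The upper bound portion of your argument (each nonnegative summand is bounded by the total) is a valid observation, although the ``term-by-term analysis'' giving $v_E((P^\tau)'(v_n))\leq c_2$ is stated more casually than it deserves: because $v_E(v_n)\to 0$, the contributions $v_E(ka_k)+ (k-1)v_E(v_n)$ converge to the fixed values $v_E(ka_k)$, and one must rule out cancellation when several of these are minimal. That is a Newton-polygon argument, not just a glance at the terms.

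The lower bound, however, contains a genuine gap. You propose to extract a bound on the smallest ramification break from the strictly APF inequality applied to $x=\pi_n$, but as you yourself note, the APF condition only controls the valuation of
$$\prod_{\sigma\neq\id}\Bigl(1+\tfrac{\sigma(\pi_n)-\pi_n}{\pi_n}\Bigr)-1,$$
and a product of factors, each individually close to $1$ up to a small valuation, can still be globally very close to $1$ through cancellation. Your suggestion to apply APF to auxiliary elements like $\pi_n^k$ or $1+y\pi_n$ ``to rule out systematic cancellations'' is precisely the missing step: you describe what a successful argument would need to accomplish, not the argument itself. Going from a bound on the norm ratio to a bound on each $v_{E(v_n)}(\sigma(\pi_n)-\pi_n)$ is the content of ramification theory (Herbrand's theorem and the relation between ramification filtrations and norm maps), and it does not fall out of the bare APF inequality. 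The paper avoids this difficulty entirely by working at the level of $v_n$, where the roots of the explicit Weierstrass polynomial $P^\tau(T)-v_{n-1}$ can be compared directly, and then transferring to $\pi_n$ via the Teichmüller expansion. As written, your proof establishes the upper bound but leaves the lower bound as an open problem.
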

\begin{proof}
Using the same arguments as in the proof of \cite[Lemma 7.2]{cais2015canonical}, we can prove that there exist positive constants $A'$ and $B'$ such that, for all $n \geq 1$ and for all $\sigma \in \Gal(E(v_n)/E(v_{n-1}))$:
$$A'q^n \leq \mathrm{ord}_{v_n}(\sigma(v_n)-v_n) \leq B'q^n.$$
Let $n_0$ be such as in lemma \ref{lemma n_0 tot ram of deg q}. Since $v_{E(v_n)}(v_n) = k_0$ and since $\pi_n$ is a uniformizer of $\mathcal{O}_{E(v_n)}$, one can write
$$v_n = \sum_{k \geq k_0}a_k\pi_n^k,$$
where the $a_k$ can be written as Teichmüller representatives of elements of $k_E$ (in particular, $\sigma(a_k)=a_k$ for all $k$ and $v_E(a_k)=0$ if $a_k \neq 0$), and with $a_{k_0} \neq 0$.
This gives us that
$$\sigma(v_n)-v_n = \sum_{k \geq k_0}a_k(\sigma(v_n)^k-v_n^k)$$
so that
$$\sigma(v_n)-v_n = (\sigma(\pi_n)-\pi_n)\sum_{k \geq k_0}a_k(\sum_{m=0}^k\sigma(\pi_n)^m\pi_n^{k-m}).$$
In particular, we get that
$$\mathrm{ord}_{\pi_n}(\sigma(v_n)-v_n) = \mathrm{ord}_{\pi_n}(\sigma(\pi_n)-\pi_n)+k_0,$$
so that $\mathrm{ord}_{\pi_n}(\sigma(\pi_n)-\pi_n)+k_0 = k_0\mathrm{ord}_{v_n}(\sigma(v_n)-v_n)$. In particular, by letting $B=k_0(B'+1)$, we get that $\mathrm{ord}_{\pi_n}(\sigma(\pi_n)-\pi_n) \leq Bq^n$. There also exists $n_1 \geq 0$ such that $q^{n_1} > A'$, so that $k_0(A'-\frac{1}{q^n}) > 0$ for $n \geq n_1$, and if we let 
$$A:=\min( \frac{1}{q^n}\min_{n \leq n_1, \sigma \in \Gal(E(v_n)/E(v_{n-1})}(\mathrm{ord}_{\pi_n}(\sigma(\pi_n)-\pi_n)),k_0(A'-\frac{1}{q^{n_1}})),$$
then we have $q^nA \leq \mathrm{ord}_{\pi_n}(\sigma(\pi_n)-\pi_n)$ for all $n \geq 1$ and $\sigma \in \Gal(E(v_n)/E(v_{n-1}))$.
\end{proof}

\begin{lemm}
\label{lemma true for all sigma in Gal(E(vn)/E)}
Let $\sigma \in \Gal(E(v_n)/E)$. Then
$$v_{E(v_n)}(\sigma(\pi_n)-\pi_n) \leq Bq^n,$$
where $B$ is the same as in lemma \ref{lemm positive constants ord pi_n sigma}.
\end{lemm}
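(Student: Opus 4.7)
The plan is to prove the bound by induction on $n$, treating separately the case where $\sigma$ fixes $E(v_{n-1})$ (which is handled directly by lemma \ref{lemm positive constants ord pi_n sigma}) and the case where it does not; in the second case I will reduce to the inductive hypothesis applied to $\bar{\sigma}:=\sigma|_{E(v_{n-1})} \in \Gal(E(v_{n-1})/E)$, which is nontrivial.

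The key observation for the inductive step is that $E(v_n)/E$ is totally ramified, since it sits inside $E_\infty/E$ which is totally ramified by our running hypothesis; thus $\pi_n$ is an Eisenstein generator of $\mathcal{O}_{E(v_n)}$ over $\mathcal{O}_E$, and one can write $\pi_{n-1}=h(\pi_n)$ for some $h \in \mathcal{O}_E[T]$. Using the polynomial identity $h(X)-h(Y)=(X-Y)\,H(X,Y)$ with $H \in \mathcal{O}_E[X,Y]$, and applying $\sigma$ (which fixes the coefficients of $h$), I obtain
\[
\sigma(\pi_{n-1})-\pi_{n-1} \;=\; \bigl(\sigma(\pi_n)-\pi_n\bigr)\cdot H(\sigma(\pi_n),\pi_n).
\]
Because $H$ has integral coefficients and both $\pi_n$ and $\sigma(\pi_n)$ lie in $\mathcal{O}_{E(v_n)}$, the factor $H(\sigma(\pi_n),\pi_n)$ has non-negative $v_{E(v_n)}$-valuation, so I conclude that $v_{E(v_n)}(\sigma(\pi_n)-\pi_n) \leq v_{E(v_n)}(\sigma(\pi_{n-1})-\pi_{n-1})$.

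To finish, I would use lemma \ref{lemma n_0 tot ram of deg q} to write $v_{E(v_n)}|_{E(v_{n-1})}=q\cdot v_{E(v_{n-1})}$, apply the inductive hypothesis to the nontrivial element $\bar\sigma \in \Gal(E(v_{n-1})/E)$, and combine to get
\[
v_{E(v_n)}(\sigma(\pi_n)-\pi_n) \;\leq\; q\cdot v_{E(v_{n-1})}(\bar\sigma(\pi_{n-1})-\pi_{n-1}) \;\leq\; q\cdot Bq^{n-1} \;=\; Bq^n.
\]
The main obstacle is the handling of the finitely many initial indices $n \leq n_0$, where lemma \ref{lemma n_0 tot ram of deg q} does not yet guarantee that $[E(v_n):E(v_{n-1})]=q$; these form the base of the induction, and one can either verify them by a direct finite computation or absorb them by enlarging the constant $B$ by a bounded factor, which is consistent with the flexibility already present in the choice of $B$ in lemma \ref{lemm positive constants ord pi_n sigma}.
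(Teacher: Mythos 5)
Your approach is genuinely different from the paper's. The paper simply transfers Cais--Davis's \cite[Lemm.~7.3]{cais2015canonical} argument to obtain the bound $\mathrm{ord}_{v_n}(\sigma(v_n)-v_n)\leq B'q^n$ for all $\sigma\in\Gal(E(v_n)/E)$, and then converts it to the $\pi_n$ statement via the identity $\mathrm{ord}_{\pi_n}(\sigma(\pi_n)-\pi_n)+k_0=k_0\,\mathrm{ord}_{v_n}(\sigma(v_n)-v_n)$ already established in the proof of Lemma \ref{lemm positive constants ord pi_n sigma}; the constant $B=k_0(B'+1)$ then matches by construction. You instead give a self-contained induction on $n$, using the factorization $h(X)-h(Y)=(X-Y)H(X,Y)$ applied to a polynomial $h\in\mathcal{O}_E[T]$ with $\pi_{n-1}=h(\pi_n)$ (valid because $E(v_n)/E$ is totally ramified, so $\mathcal{O}_{E(v_n)}=\mathcal{O}_E[\pi_n]$). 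This is a correct and transparent telescoping argument, and it works directly on the $\pi_n$ without passing through the $v_n$; that is a real simplification in the sense that it does not rely on Cais--Davis's external lemma.

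Two small remarks. First, your worry about the indices $n\leq n_0$ is largely unnecessary for the inductive step itself: the only fact you need there is $[E(v_n):E(v_{n-1})]\leq q$, which holds for \emph{every} $n$ since $v_n$ is a root of the Weierstrass polynomial of degree $q$ attached to $P^\tau(T)-v_{n-1}$; so the estimate $v_{E(v_n)}(\sigma(\pi_{n-1})-\pi_{n-1})\leq q\cdot v_{E(v_{n-1})}(\bar\sigma(\pi_{n-1})-\pi_{n-1})$ needs only this inequality, not the exact degree $q$ from Lemma \ref{lemma n_0 tot ram of deg q}. Second, the genuine gap is at the very bottom of the induction: for $n=1$ your case split requires either $\sigma$ to fix $E(v_0)$ (handled by Lemma \ref{lemm positive constants ord pi_n sigma}) or else a bound for $\bar\sigma\in\Gal(E(v_0)/E)$, and nothing in the preceding lemmas provides the latter when $E(v_0)\neq E$. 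Your suggested remedy of enlarging $B$ by a bounded factor would resolve this, and is harmless for the later applications, but note that it contradicts the literal statement of the lemma (``$B$ is the same as in Lemma \ref{lemm positive constants ord pi_n sigma}''); the paper's route of citing Cais--Davis avoids this because the constant $B'$ there is already uniform over all $\sigma\in\Gal(E(v_n)/E)$.
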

\begin{proof}
The same proof as the one of \cite[Lemm. 7.3]{cais2015canonical} shows that for all $\sigma \in \Gal(E(v_n)/E)$,
$$\mathrm{ord}_{v_n}(\sigma(v_n)-v_n) \leq B'q^n,$$
with $B'$ the same constant as the one in the proof of lemma \ref{lemm positive constants ord pi_n sigma}. Using the fact that $\mathrm{ord}_{\pi_n}(\sigma(\pi_n)-\pi_n)+k_0 = k_0\mathrm{ord}_{v_n}(\sigma(v_n)-v_n)$, we get that $\mathrm{ord}_{\pi_n}(\sigma(\pi_n)-\pi_n) \leq Bq^n$ with $B=k_0(B'+1)$, which is the same constant $B$ as in lemma \ref{lemm positive constants ord pi_n sigma}.
\end{proof}

\begin{proof}[Proof of Proposition \ref{prop v_tau inside Cohen ring}]
Lemma \ref{lemm positive constants ord pi_n sigma} and lemma \ref{lemma true for all sigma in Gal(E(vn)/E)} allow us to apply lemmas 7.5, 7.6, corollary 7.7, proposition 7.9 and lemmas 7.10 and 7.11 of \cite{cais2015canonical} (note that we do not need lemma 7.4 of ibid. since all our extensions are already Galois) in order to obtain the equivalent of corollary 7.12 of ibid., that is that there exist $i, i_0 \geq 0$ such that if $q^m|[E^{(i+j)}:E^{(i)}]$, then $\pi_{i_0+m}$ belongs to $E^{(i+j)}$ and \textit{a fortiori} $v_{i_0+m} \in E^{(i+j)}$. In particular, if $i_1$ is such that $[E^{(i)}:E]|q^{i_1}$, then $\phi_q^{i_1}(u_\tau) \in \A_{E_\infty/E}^+$.
\end{proof}

Lemma \ref{lemma canonical ring = ring LT upto Frob} and proposition \ref{prop v_tau inside Cohen ring} now allow us to relate the power series $F_g^\tau$ and $P^\tau$ to the power series $[\alpha](T)$ and $[\chi_\alpha(g)](T)$:

\begin{theo}
If $K_\infty/K$ is an infinite strictly APF Galois extension such that the action of $\Gamma_K = \Gal(K_\infty/K)$ is liftable of finite height with power series $P(T)$ and $\{F_g(T)\}_{g \in \Gamma_K}$, then the power series $F_g$ are twists of semi-conjugates over a finite extension of $\Qp$ of endomorphisms of a relative Lubin-Tate group $S$, relative to a subfield $F$ of a Galois extension $E$ of $\Qp$ and attached to an element $\alpha$ of $F$, and such that if $E_\infty^S$ denotes the extension of $E$ generated by the torsion points of $S$, then $K_\infty \subset E_\infty^S$ and $E_\infty/K_\infty$ is finite. Moreover, there is an isogeny from $[\alpha]$ to $P^\tau$.  
\end{theo}
\begin{proof}
Lemma \ref{lemma canonical ring = ring LT upto Frob} and proposition \ref{prop v_tau inside Cohen ring} show that there exists $i \geq 0$ and $h(T) \in \mathcal{O}_E[\![T]\!]$ such that $u_\tau = h(\phi_q^{-i}(w))$. Let $\tilde{w}=\phi_q^{-i}(w)$, so that $u_\tau = h(\tilde{w})$. For $g \in \G_E$, we have $g(u_\tau) = F_g^\tau(u_\tau)$, and $\phi_q(u_\tau) = P^\tau(u_\tau)$. We also have $g(w) = [\chi_\alpha(g)](w)$ and $\phi_q(w)=[\alpha](w)$ so that $g(\tilde{w}) = [\chi_\alpha(g)](\tilde{w})$ and $\phi_q(\tilde{w})=[\alpha](\tilde{w})$. This gives us that $F_g^\tau \circ h = h \circ [\chi_\alpha(g)]$ and that $P^\tau \circ h = h \circ [\alpha]$, which is what we wanted to prove.
\end{proof}

\begin{rema}
The fact that the power series $F_g$ are twists of semi-conjugates of the relative Lubin-Tate group seems necessary, as there is no reason to expect that the character $g \mapsto F_g'(0)$ has weight $\geq 1$ at $\id$. For example, let $K_\infty/K$ be a (classical) Lubin-Tate extension, attached to a uniformizer $\pi$ of $\mathcal{O}_K$, where $K/\Qp$ is totally ramified and Galois, and let $u \in \At = \mathcal{O}_K \otimes_{\Qp}W(\Etplus)$ be the element constructed in the Lubin-Tate setting at the beginning of this section, so that $[\chi_\pi(g)](u) = g(u)$ for $g \in \G_K$. For any $\tau \neq \id$ in $\Gal(K/\Qp)$, the element $u_\tau = (\tau \otimes \id)(u)$ satisfies $F_g^\tau(u_\tau) = g(u_\tau)$, $P^\tau(u_\tau) = \phi(u_\tau)$ and $\overline{u_\tau} = \overline{u}$, so that the action of $\Gamma_K$ is liftable with power series $[\pi]^\tau$ and $\{[\chi_\pi(g)]^\tau\}_{g \in \Gamma_K}$, yet the weight of the character $\eta_\tau : g \mapsto ([\chi_\pi(g)]^\tau)'(0)$ at $\id$, which is the weight of $\eta_\id : g \mapsto ([\chi_\pi(g)])'(0)$ at $\tau$, is zero (see for example \cite[§4]{Tat67} or §3.2 of \cite{fourquaux2009applications}).

However, in the case where $K/\Qp$ is unramified, the author expects that the power series $F_g$ are actual semi-conjugates of the relative Lubin-Tate group.
\end{rema}

\bibliographystyle{amsalpha}
\bibliography{bibli}
\end{document}